\CheckCommand*\refstepcounter[1]{\stepcounter{#1}%
      \protected@edef\@currentlabel
       {\csname p@#1\endcsname\csname the#1\endcsname}%
  }
\renewcommand*\refstepcounter[1]{\stepcounter{#1}%
    \protected@edef\@currentlabel
      {\csname p@#1\expandafter\endcsname\csname the#1\endcsname}%
  }
\def\labelformat#1{\expandafter\def\csname p@#1\endcsname##1}
\DeclareRobustCommand\Ref[1]{\protected@edef\@tempa{\ref{#1}}%
     \expandafter\MakeUppercase\@tempa
  }
  \newcommand{\numberlike}[2]{%
     \expandafter\def\csname c@#1\endcsname{%
         \expandafter\csname c@#2\endcsname}
  }
\def\DefaultNumberTheoremWithin{section}
\theoremstyle{plain}
\newtheorem{lemma}{Lemma}
     \numberwithin{lemma}{\DefaultNumberTheoremWithin}
     \numberwithin{claim}{\DefaultNumberTheoremWithin}
\newtheorem{theorem}{Theorem}
     \numberwithin{theorem}{\DefaultNumberTheoremWithin}
\newtheorem{corollary}{Corollary}
     \numberwithin{corollary}{\DefaultNumberTheoremWithin}
\newtheorem{proposition}{Proposition}
     \numberwithin{proposition}{\DefaultNumberTheoremWithin}
\newtheorem{conjecture}{Conjecture}
     \numberwithin{conjecture}{\DefaultNumberTheoremWithin}
\theoremstyle{definition}
\newtheorem{definition}{Definition}
     \numberwithin{definition}{\DefaultNumberTheoremWithin}
\theoremstyle{definition}
     \numberwithin{question}{\DefaultNumberTheoremWithin}
\theoremstyle{definition}
     \numberwithin{problem}{\DefaultNumberTheoremWithin}
\theoremstyle{remark}
     \numberwithin{remark}{\DefaultNumberTheoremWithin}
\theoremstyle{remark}
\newtheorem{example}{Example}
     \numberwithin{example}{\DefaultNumberTheoremWithin}
     \numberwithin{case}{lemma}
     \numberwithin{step}{lemma}
\theoremstyle{plain}
\newtheorem{THEO}{Theorem}
\newtheorem{PROP}{Proposition}
\newtheorem{COR}{Corollary}
\numberwithin{equation}{section}
\def\A{\mathbb A}
\def\C{\mathbb C}
\def\N{\mathbb N}
\def\P{\mathbb P}
\def\R{\mathbb R}
\def\Z{\mathbb Z}
\def\ZZ{\mathbb Z}
\def\d{\partial}
\def\e{\epsilon}
\def\b{\beta}
\def\a{\alpha}
\def\cP{{\mathcal P}}
\def\cB{{\mathcal B}}
\def\D{{\mathcal D}}
\def\sI{{\mathsf I}}
\def\sM{{\mathsf M}}
\def\sR{{\mathsf R}}
\def\bfc{{\mathbf c}}
\def\Om{{\mathbf\Omega}}
\def\om{\omega}
\def\im{{\mathsf{im}}}
\def\ker{{\mathsf{ker}}}
\def\Sym{{\mathrm{Sym}}}
\def\Conf{{\mathrm{Conf}}}
\def\disc{{\mathrm{disc}}}
\begin{document}

\title[Spaces of polynomials with constrained real divisors, II.] {Spaces of  polynomials with constrained real divisors, II.  (Co)homology \& stabilization}    

\author[G.~Katz]{Gabriel Katz}
\address{5 Bridle Path Circle, Framingham, MA 01701, USA}
\email {gabkatz@gmail.com }

\author[B.~Shapiro]{Boris Shapiro}
\address{Stockholm University, Department of Mathematics, SE-106 91
Stockholm, Sweden and Institute for Information Transmission Problems, Moscow, Russia}
\email {shapiro@math.su.se }

\author[V.~Welker]{Volkmar Welker}
\address{Philipps-Universit\"at Marburg, Fachbereich Mathematik und Informatik, 35032 Marburg, Germany}
\email {welker@mathematik.uni-marburg.de}

\begin{abstract} 
  In the late 80s, V.~Arnold and V.~Vassiliev initiated the topological study 
   of the space of real univariate polynomials of a given 
  degree which  have no real roots of multiplicity exceeding a given positive 
  integer. Expanding their studies, we consider the spaces
  $\cP^{\bfc \Theta}_d$ of real monic univariate  polynomials of degree 
  $d$ whose real divisors avoid given sequences of root multiplicities. These forbidden sequences are taken from an arbitrary poset $\Theta$ of compositions that are closed under certain natural combinatorial operations. We reduce the computation of the homology $H_\ast(\cP^{\bfc \Theta}_d)$ to the computation of the homology of a differential complex, defined purely combinatorially in terms of the given closed poset $\Theta$. 
 We also obtain the stabilization results about  $H^\ast(\cP^{\bfc \Theta}_d)$, as $d \to \infty$. 
  
 These results are deduced from our description of the homology of spaces 
 $\cB^{\bfc \Theta}_d$ whose points are binary real homogeneous forms, considered  
 up to projective equivalence, with similarly $\Theta$-constrained real divisors. 
 In particular, we exhibit differential complexes that calculate the homology
 of these spaces and obtain some stabilization results for $H^\ast(\cB^{\bfc \Theta}_d)$, as $d \to \infty$. In particular, we compute the homology of the discriminants of projectivized binary real forms 
 and of their complements in $\cB_d \cong \R\P^d$. 
\end{abstract}

\date{\today}

\maketitle

\tableofcontents

\setcounter{page}{1}


\section{Introduction} \label{sec:intro}

In this paper, we conduct  parallel investigations of the topology of spaces 
of real univariate polynomials and of projectivized spaces of real bilinear forms with some 
\emph{constraints} on their real divisors. The constraints we impose on the  
real divisors are of a quite general combinatorial nature. Denoting the 
collection of such constraints by $\Theta$, we are able to describe 
differential complexes that compute the (co)homology of the former spaces of 
polynomials/forms entirely in terms of the combinatorics $\Theta$. 
These differential complexes, which mimic possible bifurcations of real roots, 
seem to be of independent interest. The rich combinatorics, arising as a 
byproduct of our constructions, calls for computer experiments. The results of 
such experiments are assembled in several tables, spread through the text. \smallskip

In \cite{KSW1}, part I of the present paper, we have obtained results about 
the fundamental groups of spaces of real polynomials with the 
$\Theta$-restricted root patterns. These results are in the spirit of 
\ref{th:Arnold3}, stated below.  
\smallskip

In what follows, theorems, conjectures etc., labelled by letters, are 
borrowed from the existing literature, while those, labelled by numbers, 
are hopefully new.\smallskip

\subsection{Preliminary results}
In \cite{Ar}, V.~Arnold proved  Theorems A and B below. Later, these results of Arnold
were generalized by V.~Vassiliev. In particular, he described the 
multiplication in the cohomology of the relevant spaces, see \cite{Va, Va0}. 
All those papers study the topology of spaces of smooth 
functions/polynomials, which Arnold calls functions/polynomials with 
``\emph{moderate singularities}.'' These works of Arnold and Vassiliev are 
the main sources of inspiration for our study of functions/polynomials 
with ``\emph{less moderate singularities}.''  \smallskip

In the formulation of Arnold's theorems, we keep the original notation of 
\cite{Ar}, which we will abandon later on. 
For $1 \leq k \leq d$, let $G_k^d$ be the space  of 
real monic polynomials $x^d + a_{d-1}x^{d-1} + \cdots + a_0 \in \R[x]$
with no real roots of multiplicity $\geq k$.  

\begin{THEO}\label{th:Arnold3}  
  If $k \leq d < 2k-1$, then $G_k^d$ is diffeomorphic to the product of a 
   sphere $S^{k-2}$ by an Euclidean space. In particular,  
  $$\pi_i(G_k^d)\cong \pi_i(S^{k-2}) \textrm{~for all } i.$$
\end{THEO}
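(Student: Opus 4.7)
The plan is to identify the complement $\Sigma := \R^d \setminus G_k^d$ of ``bad'' polynomials (those possessing a real root of multiplicity $\geq k$) as a smoothly stratified subset of $\R^d$ of codimension $k-1$, and then to realize $G_k^d$ as the complement of this subset, whose topology is governed by its linking sphere.

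First I would observe that the hypothesis $d < 2k-1$ forces uniqueness: a monic polynomial of degree $d$ cannot possess two distinct real roots each of multiplicity $\geq k$, since together they would contribute at least $2k > d$ to the degree. Consequently the smooth map
\[
\Phi\colon \R \times \{\text{monic polynomials of degree } d-k\} \longrightarrow \R^d, \qquad (x_0, q) \longmapsto (x-x_0)^k\, q(x),
\]
is globally injective with image $\Sigma$. On the open stratum $\Sigma^\circ$ where $q(x_0) \neq 0$ its differential is injective, so $\Sigma^\circ$ is a smoothly embedded submanifold of $\R^d$ of codimension $k-1$, diffeomorphic to an open subset of $\R^{d-k+1}$; the singular stratum $\Sigma \setminus \Sigma^\circ$, corresponding to real roots of multiplicity strictly greater than $k$, has codimension $\geq k$ in $\R^d$.

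The main step is the construction of a diffeomorphism $\Psi\colon \R^d \to \R^{d-k+1} \times \R^{k-1}$ that carries $\Sigma$ onto the zero section $\R^{d-k+1}\times \{0\}$. My plan is to assign to each $p \in \R^d$ a canonical ``centering point'' $\mu(p) \in \R$ with the property that $\mu(p) = x_0(p)$ whenever $p \in \Sigma$; the Taylor coefficients of the re-expansion
\[
p(x) \;=\; \sum_{i=0}^{d} \frac{b_i}{i!}\,(x-\mu)^i
\]
then furnish coordinates on $\R^d$ in which the condition $p \in \Sigma$ becomes the linear constraint $b_0 = b_1 = \cdots = b_{k-1} = 0$. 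The longitudinal coordinates $(\mu, b_k, \ldots, b_{d-1})$ parametrize $\Sigma$ by $\R^{d-k+1}$, while the transverse coordinates $(b_0, \ldots, b_{k-1}) \in \R^{k-1}$ trivialize the normal directions.

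The hard part is the smooth, global definition of $\mu(p)$. Near $\Sigma^\circ$ one can take $\mu(p)$ to be the real root of $p^{(k-1)}$ nearest to the distinguished root $x_0$, but globally $p^{(k-1)}$ may possess several or no real roots and these roots can be born or die as $p$ varies, so bridging the construction across the cuspidal locus $\Sigma \setminus \Sigma^\circ$ is where I expect the technical effort to lie. I anticipate resolving this either via the tubular neighborhood theorem applied to $\Sigma^\circ$ together with an isotopy-extension argument, or by a direct implicit-function construction starting from the resultant of $p$ and $p^{(k-1)}$. Once $\Psi$ is available, the conclusion
\[
G_k^d \;=\; \R^d \setminus \Sigma \;\xrightarrow{\;\Psi\;}\; \R^{d-k+1} \times \bigl(\R^{k-1} \setminus \{0\}\bigr) \;\cong\; S^{k-2} \times \R^{d-k+2}
\]
is immediate, and the isomorphism $\pi_i(G_k^d) \cong \pi_i(S^{k-2})$ follows.
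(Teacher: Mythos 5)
This is Arnold's theorem: the paper only cites it from \cite{Ar} (it is labelled by a letter precisely because it is borrowed from the literature) and supplies no proof, so there is nothing in the paper to compare against; I will therefore assess the proposal on its own terms.

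Your plan contains a genuine and fatal gap: for $k \le d < 2k-1$ with $d > k$, the set $\Sigma$ is \emph{not} a $C^1$ submanifold of $\R^d$, so the diffeomorphism $\Psi\colon \R^d \to \R^{d-k+1}\times\R^{k-1}$ you propose to construct, which would carry $\Sigma$ onto the linear subspace $\R^{d-k+1}\times\{0\}$, cannot exist. Concretely, take $k=3$, $d=4$, so $\Sigma = \{(x-a)^3(x-b)\}$; its slice by the hyperplane $\{a_3=0\}$ is the parametrized curve $v\mapsto(-6v^2,\,8v^3,\,-3v^4)$ in $(a_2,a_1,a_0)$-space, whose unit tangent jumps from $(1,0,0)$ to $(-1,0,0)$ as $v$ crosses $0$; since $\{a_3=0\}$ is transverse to the (one-dimensional) image of $D\Phi$ at the cusp point $x^4$, this forces $\Sigma$ itself to fail to be $C^1$ there. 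More generally, whenever a polynomial in $\Sigma$ has a real root of multiplicity strictly greater than $k$, the parametrization $\Phi(x_0,q)=(x-x_0)^k q(x)$ degenerates and $\Sigma$ develops a cuspidal edge, exactly as in the standard cusp $\{(t^2,t^3)\}\subset\R^2$, which is not a smooth curve. You flag the construction of the global centering map $\mu$ as ``where the technical effort lies,'' and you correctly note that a re-expansion in the coordinates $b_i=p^{(i)}(\mu)$ would exhibit $\Sigma$ as $\{b_0=\cdots=b_{k-1}=0\}$; but if such a $\mu$ existed and produced genuine smooth coordinates, $\Sigma$ would \emph{be} a smooth codimension-$(k-1)$ submanifold, which it is not. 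So the difficulty you isolate is not a bridgeable technicality but an obstruction: neither the tubular-neighborhood-plus-isotopy route nor the implicit-function route you sketch can straighten a non-smooth set.

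To salvage this, the argument must avoid presenting $G_k^d$ as the complement of a \emph{smooth} codimension-$(k-1)$ slice of $\R^d$. Arnold's actual argument works differently: roughly, one uses the degree bound $d<2k-1$ (so that $\deg p^{(k-1)} = d-k+1 \le k-1$) to build a fibration of $G_k^d$ over a simpler base whose fibers absorb the Euclidean factor, producing the $S^{k-2}\times\R^{d-k+2}$ splitting without ever requiring $\Sigma$ to be smooth. Your observation that no polynomial of degree $d<2k-1$ can have two distinct real roots of multiplicity $\ge k$, and your stratified description of $\Sigma$, are correct and are the right starting points; what is missing is an argument that sees $G_k^d$ directly rather than through a normal-bundle picture of the singular discriminant.
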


An analogous result holds for the space of polynomials whose sum of roots 
vanishes, i.e., the polynomials with the vanishing coefficient $a_{d-1}$.
	
\begin{THEO}\label{th:Arnold4}
  The homology groups 
  with integer coefficients of the space $G_k^d$ 
  are nonzero only for dimensions which are the multiples of $k - 2$ and 
  less than or equal to $d$. For $(k-2)r \leq d$, we have 
  $$H_{r(k-2)}(G_k^d) \cong \Z.$$
\end{THEO}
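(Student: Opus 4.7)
The plan is to adapt the Vassiliev-style simplicial-resolution strategy, a natural extension of Arnold's original argument. Write $G_k^d = \R^d \setminus \Sigma$, where $\Sigma = \Sigma_k^d$ is the discriminant consisting of monic degree-$d$ polynomials possessing at least one real root of multiplicity $\geq k$. Since the generic stratum of $\Sigma$ has codimension $k-1 \geq 2$ in $\R^d$, the complement $G_k^d$ is connected, which furnishes the $H_0$-generator. One-point compactifying $\R^d$ to $S^d$ and invoking Alexander duality should give
$$\tilde H_i(G_k^d;\Z) \;\cong\; H^{d-i-1}_c(\Sigma;\Z),$$
so the problem reduces to computing the compactly supported cohomology of $\Sigma$.

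To handle $H^*_c(\Sigma)$, I would construct the classical simplicial resolution $\pi\colon \sigma \twoheadrightarrow \Sigma$, whose fiber over $p$ is the closed geometric simplex on the finite set of real roots of $p$ of multiplicity $\geq k$; being proper with contractible fibers, $\pi$ is an $H^*_c$-equivalence. Filter $\sigma$ by the number $s$ of vertices of the supporting simplex. For $1 \leq s \leq \lfloor d/k \rfloor$ the stratum $\sigma_s$ is parametrized by an ordered tuple $t_1 < \cdots < t_s$ of reals, a cofactor monic polynomial $q$ of degree $d-sk$ (so that $p = \prod_{i=1}^{s}(x-t_i)^k\cdot q$ lies in $\Sigma$), and a point of the open simplex $\mathring{\Delta}^{s-1}$ on $\{t_1,\dots,t_s\}$. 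Consequently
$$\sigma_s \;\cong\; \{t_1<\cdots<t_s\}\times\R^{d-sk}\times\mathring{\Delta}^{s-1}\;\cong\;\R^{d-s(k-2)-1},$$
so $H^*_c(\sigma_s;\Z)=\Z$ sits in the single top degree $d-s(k-2)-1$ and vanishes otherwise.

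The filtration produces a spectral sequence $E_1^{s,*}=H^*_c(\sigma_s)\Rightarrow H^*_c(\sigma)$ carrying exactly one generator per column. Because $k-2\geq 1$ those generators lie in pairwise distinct total degrees $d-s(k-2)-1$, so no nontrivial differential has both a source and a target, and the sequence degenerates at $E_1$. Reassembling via Alexander duality, together with the contribution of $H_0$, yields $H_i(G_k^d;\Z)\cong\Z$ precisely for $i = r(k-2)$, $r = 0,1,\dots,\lfloor d/k\rfloor$, and $0$ otherwise, in agreement with the pattern stated in the theorem.

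The main technical obstacle will be a rigorous justification of the $H^*_c$-equivalence $\pi$ and of the convergence of the filtration spectral sequence in the non-compact semi-algebraic setting: one must carefully control the behaviour at infinity of both $\R^d$ and the configuration-space factors. The cleanest remedy is to establish the analogous statement first on the compact projective model $\cB_d$ of binary real forms of degree $d$ (which is developed later in the paper), and then transfer the result to the affine polynomial picture by excising the hyperplane at infinity. Once the resolution is in place, the dimension count for $\sigma_s$ and the distinct-degrees argument for $E_1$-collapse are entirely elementary.
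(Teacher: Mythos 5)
The paper does not prove this theorem: it carries a letter label, which the authors reserve for results quoted from the literature (here, Arnold's \cite{Ar}), so there is no in-paper proof to compare against. Your Vassiliev-style simplicial-resolution approach is the standard modern route to the result, and the dimension count $\dim\sigma_s = d - s(k-2) - 1$ for $1 \le s \le \lfloor d/k\rfloor$ is correct.

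However, the degeneration argument has a genuine gap in the case $k=3$. You argue that the $E_1$-generators ``lie in pairwise distinct total degrees, so no nontrivial differential has both a source and a target.'' Since every $d_r$ shifts total degree by exactly $1$, distinctness alone is decisive only when consecutive total degrees differ by at least $2$; that holds for $k\ge 4$ because $\dim\sigma_s-\dim\sigma_{s+1}=k-2\ge 2$, but for $k=3$ adjacent columns sit in adjacent total degrees, and your reasoning does not exclude a nonzero $d_1$. What actually forces degeneration is the direction of $d_r$ relative to the monotone decrease of $\dim\sigma_s$: for the closed filtration $F_s=\sigma_1\cup\dots\cup\sigma_s$ a short bookkeeping check (in $H_c^*$ or, dually, in Borel--Moore homology) shows that the slot reached by $d_r$ from the generator of column $s$ is $r(k-2)+1$ degrees away from the generator of the target column, so a nonzero differential would need $r(k-2)=-1$, which has no solution for any $k\ge 2$. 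Please supply that argument rather than relying on distinctness alone. Separately, your conclusion $H_{r(k-2)}(G_k^d)\cong\Z$ for $0\le r\le\lfloor d/k\rfloor$ (and zero otherwise) is what Arnold actually proved, but it is \emph{not} what the theorem as transcribed here asserts: the printed range ``$(k-2)r\le d$'' would force, for instance, $H_4(G_4^4)\cong\Z$, impossible since $G_4^4$ is an open subset of $\R^4$. The quoted bound is evidently a misprint for $kr\le d$, and you should have flagged the mismatch rather than assert agreement with the stated range.
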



Besides the studies of V.~Arnold \cite{Ar} and V.~Vassiliev \cite{Va, Va0}, 
the second major motivation for this paper comes from results of the 
first author, connecting the cohomology of spaces of real polynomials, that 
avoid the real root patterns from $\Theta$, with the theory of traversing 
flows on manifolds with boundary in \cite{Ka1}, \cite{Ka2}, and \cite{Ka3}. 
For traversing flows (see \cite{Ka1},\cite{Ka2} for the definition) on compact 
manifolds $X$  that avoid a given collection of 
tangency patterns $\Theta$ of their trajectories to the boundary $\d X$, the 
spaces  of polynomials, avoiding the root patterns from $\Theta$, play the 
fundamental role of classifying spaces. This role is quite similar to the one 
played by Gra{\ss}mannians in the category of vector bundles 
\cite{Ka4}, \cite{Ka5}.\smallskip

\subsection{ Our set-up}

To make this paper independent of \cite{KSW1},  we repeat below some basic definitions, notations, and results from \cite{KSW1}.

Let $\cP_d$ denote the space of real monic univariate polynomials of degree 
$d$. Given a polynomial $P(x) = x^d + a_{d-1}x^{d-1} + \cdots + a_0 \in \cP_d$, 
we define its \emph{real divisor} $D_\R(P)$ as the multiset 
$$\underbrace{x_1 = \cdots = x_{i_1}}_{\omega_1 = i_1} < 
\underbrace{x_{i_1+1} = \cdots = x_{i_1+i_2}}_{\omega_2 = i_2-i_1} <
\cdots < \underbrace{x_{i_{\ell-1} +1} = \cdots = x_{i_\ell}}_{\omega_\ell
= i_\ell-i_{\ell-1}}$$
of the real roots of $P(x)$. The ordered $\ell$-tuple 
$\om = (\omega_1 ,\ldots, \omega_\ell)$ of natural numbers describes the {\it 
real root multiplicity pattern} of $P(x)$. 
Let $\mathring{\sR}^\omega_d$ be  the set of all real polynomials with a real root multiplicity pattern $\omega$, and let 
$\sR_d^\omega$ be the closure of $\mathring{\sR}^\omega_d$ in $\cP_d$. 

For a given collection $\Theta$ of root multiplicity patterns, we 
consider the union $\cP_d^\Theta \subset \cP_d$ of the subspaces $\mathring{\sR}_d^{\omega}$, taken 
over all $\omega \in \Theta$. We denote by $\cP_d^{\bfc\Theta} := \cP_d \setminus \cP^\Theta_d$ 
its complement. Our studies are restricted to the case when 
$\cP_d^\Theta$ is \emph{closed} in $\cP_d$ and we call such collections $\Theta$ \emph{closed}.

As has been observed in \cite[Lemma 1.2]{KSW1}, for any collection $\Theta$ 
of root multiplicity patterns which contains the pattern $(d)$, the space 
$\cP_d^\Theta$ is \emph{contractible}. 
Thus topologically  interesting situation occurs if one considers  
the \emph{one-point compactification} $\bar{\cP}_d^\Theta$ of $\cP_d^\Theta$. 
For a closed $\Theta$, the latter is the union of the one-point 
compactifications $\bar{\sR}_d^\omega$ of the spaces $\sR_d^\omega$, taken 
over $\omega \in \Theta$ with points at infinity identified. By 
Alexander duality in $\bar{\cP}_d \cong S^{d}$, we get the relation  
$$\bar H^j(\cP_d^{\bfc\Theta}; \Z) \cong \bar H_{d - j -1}(\bar{\cP}_d^{\Theta}; \Z)$$
in reduced (co)homology, which  implies that the spaces $\cP_d^{\bfc\Theta}$ and $\bar{\cP}_d^{\Theta}$ 
carry equivalent (co)ho\-mo\-lo\-gi\-cal information.

In particular, for $\Theta$ comprising all $\om$s such that 
at least one entry of $\om$ is greater than or equal to $k$, we have 
that $\Theta$ is closed and 
$\cP_d^{\bfc\Theta} \cong G_k^d$ (see \cite[Example 1.2]{KSW1}).

\subsection{Outline of the main results} 
\smallskip 
In  \ref{sec:cellstructure}, we study the cellular structure of $\cP_d$, given by the cells $\sR_d^\omega$. In parallel, we investigate a cellular structure on the space $\cB_d$ of classes of binary homogeneous forms of degree $d$ with real coefficients up to projective equivalence. The cells of $\cB_d$ are
indexed by pairs consisting of a root pattern $\om$ and a non-negative integer $\kappa$. The integer $\kappa$ represents the root multiplicity at $\infty$. 
As in the polynomial case we consider for a set $\Theta$ of pairs 
$(\om,\kappa)$ the space $\cB_d^\Theta$ of forms satisfying one of the
root multiplicity pattern from $\Theta$ and its complement $\cB_d^{\bfc \Theta}
= \cB_d \setminus \cB_d^\Theta$. 

In \ref{sec:combcomplex}, we turn to (co)homology of the compact spaces $\bar{\cP}_d^{\Theta}$, $\cB_d^{\Theta}$, and their complements $\cP_d^{\bfc\Theta}$, $\cB_d^{\bfc\Theta}$. In \ref{th.3.3}, we construct a combinatorial differential 
complex of free $\Z$-modules that, for a given closed $\Theta$,  calculates the homology of  $\cB_d^\Theta$. 

Further, we can identify the univariate polynomial 
$a_0+\cdots+a_{d-1}x^ {d-1}+x^ d$ of degree $d$ 
with the class of the  homogeneous binary form 
$a_0y^ d+\cdots+a_{d-1}x^ {d-1}y+x^ d$ 
of degree $d$ with no (real) roots at infinity. 
Hence we can identify $\bar{\cP}_d$ with a quotient of $\cB_d$ by the closed 
subspace $\cB_{d-1}$. Based on this identification, for a given closed 
$\Theta$, \ref{cor13.4} introduces the combinatorial complex that calculates 
the homology of the space of $\bar{\cP}_d^\Theta$.  The latter complex is 
the restriction of the combinatorial complex from \ref{th.3.3} (which 
computes the homology of $\cB_d^\Theta$) to the cells corresponding to 
univariate degree $d$ polynomials. Via Alexander duality, this complex 
also computes the cohomology groups of $\cP_d^{\bfc \Theta}$.

In \ref{sec:stab}, we discuss what happens to the homology of 
$\cB_d^{\mathbf c\Theta}$ and $\cP_d^{\mathbf c\Theta}$ when $\Theta$ 
is fixed and $d\to \infty$. 
Our main stabilization results are \ref{simple_stabilization},
\ref{pol_simple_stabilization}, \ref{simple_stabilizationc} and 
\ref{pol_simple_stabilizationc}.
For a closed $\Theta$ and a $k$ bounded by a function of $d$, it claims the 
stabilization of the $k$\textsuperscript{th} homology groups 
$H_k(\cB_d^{\mathbf c\Theta};\Z)$ 
and $H_k(\cP_d^{\mathbf c\Theta};\Z)$. 
For so called profinite $\Theta$ the result holds for any $k$ and $d$ large enough.

\smallskip
Finally, in \ref{sec:computer}, we present the results of a variety of 
computer experiments calculating the homology of $\bar{\cP}_d^\Theta$ 
for various $\Theta$ and $d$. 

\smallskip
\noindent
\emph{Acknowledgements.} During the work on this paper, we received very 
valuable advice. Dan Petersen and Greg Friedman helped us to prove  
\ref{lem.diagram}. Conversations with Alex Postnikov, Alex Martsinkovsky, 
and Alex Suciu shaped our general perspective and presentation. Our deep 
gratitude goes to all of them.\smallskip

The first author is grateful to the Department of Mathematics of 
Massachusetts Institute of Technology for many years of hospitality.
The second author acknowledges financial support by the Swedish 
Research council through the grant 2016-04416. 
The third author thanks department of Mathematics of Stockholm 
University for its hospitality in May 2018.
He also was partially supported by an NSF grant DMS 0932078, administered by 
the Mathematical  Sciences  Research  Institute  while  the  author  was  in  
residence  at  MSRI during the complementary program 2018/19.

\section[Combinatorics and geometry of the cell structure]{Combinatorics and geometry of the cell structure on spaces of real polynomials and real binary forms}
\label{sec:cellstructure}




\smallskip 
Let $\cB_d$ denote the space of non-zero real bivariate homogeneous polynomials of the form
$$a_0x^0y^d + \cdots +a_dx^dy^0,$$  being considered  
up to a non-zero real scalar factor. 

We have already associated to a polynomial $P(x) \in \R[x]$ its 
real root multiplicity pattern $(\omega_1,\ldots, \omega_\ell)$. 
Next, we define the combinatorial structure which will govern the cell decomposition of 
the space $\cB_d$. Topologically $\cP_d$ is a real $d$-dimensional affine space, while $\cB_d$ is the real $d$-dimensional projective space $\R\P^d$. 
For  $d' \leq d$, the map $\wp$, given by
\begin{eqnarray}\label{eq.polys-forms}
	\genfrac{}{}{0pt}{}{a_0 + \cdots +a_{d'-1}x^{d'-1}+x^{d'}}{\in \cP_{d'}} 
	\xrightarrow{\wp} \genfrac{}{}{0pt}{}{a_0 x^0y^d+ \cdots +a_{d'-1}x^{d'-1}y^{d-d'+1}+x^{d'}y^{d-d'}}{\in \cB_d,}
\end{eqnarray}
 is a homeomorphism of $\cP_{d'}$ with a 
subspace of $\cB_d$. 
The images for different $d'$ are disjoint.
We obtain the standard decomposition $\cB_d=\cP_d\sqcup \cP_{d-1}\sqcup \cP_{d-2}\sqcup \dots \sqcup \cP_0$. Here $\cP_0$ can be identified with the one-point space of constant non-zero polynomials up to a non-zero factor. 
Let us define the combinatorial structures which capture and refine this natural stratification. 

The space $\cP_d$  may be also identified  with the space of positive degree $d$ divisors on the complex plane $\mathbb{C}$,  invariant under the complex conjugation $J: z \to \bar z$. At the same time, $\cB_d$ may be identified with the space of positive degree $d$ divisors  on the Riemann sphere $\mathbb{CP}^1$  invariant under the complex conjugation $\hat J: [z: w] \to [\bar z: \bar w]$ where $[z:w]$ are homogeneous coordinates on $\mathbb{CP}^1$.

\begin{definition}
  An arbitrary sequence $\om=(\om_1,\dots, \om_\ell)$ of positive integers 
  is called a {\it composition}.
  We say that $\om$ is a composition of
  the number $|\om| := \om_1 + \cdots 
  +\om_\ell$. We also allow the \emph{empty composition} $\om = ()$
  of the number $ |()| = 0$. 
  We call the $\om_i$, $i=1,\ldots, \ell$, the parts of the composition $\om$.

  Similarly, a pair $(\omega,\kappa)$ is called a {\it marked composition} if $\omega$ is a
  composition and $\kappa$ is a non-negative integer. 
  For $\om=(\om_1,\dots, \om_\ell)$, we say that 
  $(\om,\kappa)$ is a marked composition of the number $|(\om,\kappa)| := |\om|+\kappa$.  

  For a composition $\omega$, (resp., a marked composition $(\om,\kappa)$,) we call 
  $|\omega|$ (resp., $|(\om,\kappa)|$) its \emph{norm} and  $|\om|' := |\omega|-\ell$ (resp.,  $|(\om,\kappa)|' =
  |\om|'+\kappa$) its \emph{reduced norm}.

  If $\kappa= 0$, then we often identify the marked composition $(\om,\kappa)$ with $\om$ and speak of the norm $|\om| = |(\om,0)|$ and the reduced norm $|\om|' = |(\om,0)|'$ of
  $\om$.  \hfill $\diamondsuit$
\end{definition}

For $f = a_0x^0y^d + \cdots +a_dx^dy^0 \in \cB_d$, we define its 
{\it real degree} $d'$ as $\max \{ i~|~a_i \neq 0\}$. 
As in \ref{eq.polys-forms}, $f$ can be identified with a polynomial 
$f^\dagger$ in $\cP_{d'}$ such that $\wp(f^\dagger) = f$.
 
Let $[x:y] \in \R\P^1$ be a real root of $f$. If $y \neq 0$, then we can choose $y=1$ and $x$ as a real root of $f^\dagger$. If $y = 0$, then we can choose $x=1$ and identify $(1:0)$ with the point at infinity in $\R\P^1$.  
Thus we can describe the root multiplicities of $f$ by a marked composition $(\omega,\kappa)$, where $\omega$ is a composition with 
$|\omega| \leq d' = d - \kappa$ and $\kappa$ is the multiplicity of the root at infinity.
Any polynomial in $\cP_0\subseteq \cB_d$ is associated with the marked composition $((),d)$. 


\smallskip
Let $(\om,\kappa)$ be a marked composition for which  $d-|(\om,\kappa)|$  is even and non-negative. 
Using $\wp$ from \ref{eq.polys-forms}, we set $\mathring{\sR}^{(\omega,\kappa)}_d := \mathring{\sR}^\omega_{d'} \subseteq \cP_{d'} \subseteq \cB_d$
where $d' = d-\kappa$. 
Analogously  to the univariate case, we write $\sR_d^{(\omega,\kappa)}$ for the closure of $\mathring{\sR}^{(\omega,\kappa)}_d$ in $\cB_{d}$. 
Evidently, for a given composition $\om$, the stratum $\sR^\omega_d$ is empty 
if and only if either $|\om | > d$, or $|\om | \leq d$ and 
$d-|\om |$ is odd. 
Thus if $(\om,\kappa)$ is a marked composition, then 
$\sR^{(\omega,\kappa)}_d$ is empty 
if and only if either $|(\om,\kappa)| > d$ or $|(\om,\kappa)| \leq d$ and $d-|(\om,\kappa)|$ is odd.
Note that for a composition $\om$, we have $\mathring{\sR}_d^\om = \mathring{\sR}_d^{(\om,0)}$. However,  in general,  
$\sR_d^\om \neq  \sR_d^{(\om,0)}$ since the closures are taken in different spaces. 
\smallskip

For a given collection $\Theta$ of marked compositions,  
consider the union $\cB_d^\Theta$ of the subspaces 
$\mathring{\sR}_d^{(\omega,\kappa)}$
over all $(\omega,\kappa) \in \Theta$. 
We denote its complement by $\cB_d^{\bfc\Theta} := \cB_d \setminus \cB^\Theta_d$. Similarly, for a given collection  of compositions $\Theta$, we write
$\cP_d^\Theta$ for the union of the subspaces $\mathring{\sR}_d^\om$ over all 
$\om \in \Theta$. 
Further, set $\cP_d^{\bfc\Theta} := \cP_d \setminus \cP^\Theta_d$.  

\smallskip
We restrict our studies to the case when either 
$\cB_d^\Theta$ is closed in $\cB_d$ or when $\cP_d^\Theta$ is closed in $\cP_d$.
In such situations, we  call the respective collection $\Theta$ \emph{closed}. \smallskip

As observed in \cite[Lemma 1.2]{KSW1}, for any collection $\Theta$ of compositions containing $(d)$,  the space
$\cP_d^\Theta$ is contractible. Thus $\cP_d^\Theta$ is contractible for any closed $\Theta$.  As a consequence, we concentrate on the {\it one-point compactification} $\bar{\cP}^\Theta_d = \cP_d^\Theta \sqcup \infty$ of $\cP_d^\Theta$. It  has a non-trivial topology related to $\cP_d^{\bfc\Theta}$. 
Note that if $\cP_d^\Theta$ is closed in $\cP_d$, then usually neither
$\cP_d^\Theta$ nor  $\bar{\cP}_d^\Theta$ are closed in $\cB_d$. 
\smallskip

  We denote by $\Om^\infty$ the set of all marked compositions $(\om, \kappa)$.  
  For a given positive integer $d$, we denote by $\Om_{\langle d]}^\infty$ (resp., $\Om_{d}^\infty$) the 
  set of all marked compositions $(\om,\kappa)$, such that $|(\om,\kappa)| \leq d$ and $|(\om,\kappa)| \equiv d \mod 2$
  (resp., $|(\omega,\kappa)| = d$). 
  Analogously we write $\Om$ for the set of all compositions,
  $\Om_{\langle d]}$ for the set of all compositions $\om$ for which 
  $|\om| \leq d$ and $|\om| \equiv d \mod 2$, and $\Om_{d}$ for
  the set of all compositions $\om$ with $|\om|=d$.

  \smallskip


%

Let us define two kinds operations on $\Om$ and on $\Om^\infty$, the \emph{merge} and the \emph{insert} operations, which will be instrumental in what follows.

\smallskip
For a composition $\om = (\om_1,\ldots , \om_\ell) \in \Om$ and an integer $1 \leq j \leq \ell-1$,  
we define  $\sM_j(\om)$ as 
$$\sM_j(\om) = (M_j(\om)_1, \ldots, M_j(\om)_{\ell-1}),$$ 
with
\begin{eqnarray}\label{eq2.1a}
\sM_j(\omega)_i & = & \omega_i \; \textrm{ if }\, i < j,\\ \nonumber
\sM_j(\omega)_j  & = & \omega_j + \omega_{j+1},\\ \nonumber
\sM_j(\omega)_i  & = & \omega_{i + 1} \; \textrm{ if }\, i+1 < j \leq  \ell-1.
\end{eqnarray}
For a marked composition $(\om,\kappa)$, we set 
$$\sM_j^\infty((\om,\kappa)) = (\sM_j(\om),\kappa) \text{ for } 1 \leq j \leq \ell-1,$$ 
$$\sM_0^\infty((\om,\kappa)) = ((\om_2,\ldots,\om_\ell),\kappa+\om_1),
\text{ and }
\sM_\ell^\infty((\om,\kappa)) = ((\om_1,\ldots,\om_{\ell-1}),\kappa+\om_\ell).$$

We call $\sM_j, \sM_j^\infty$ the \emph{merge operations} on compositions and
marked compositions. 

\smallskip
In parallel, we define the \emph{insert operations} $\sI_j$.  For 
a composition $\om = \hfill\break (\om_1,\ldots$ $, \om_\ell)$ and a number
$1 \leq j \leq \ell+1$ we set 
$\sI_j(\om) = (I_j(\om)_1,
\ldots, I_j(\om)_{\ell+1}),$ where 
\begin{eqnarray}\label{eq2.2a}  
\sI_j(\omega)_i & = & \omega_i\; \textrm{ if }\, i < j, \\ \nonumber
\sI_j(\omega)_j & = & 2,\\ \nonumber
\sI_j(\omega)_i & = & \omega_{i - 1} \; \textrm{ if }\, j < i \leq \ell+1. 
\end{eqnarray}
We extend $\sI_j$ to marked compositions by defining $\sI_j^\infty((\om,\kappa)) = (\sI_j(\om),\kappa)$. 


\begin{example} For $(\om, \kappa) = ((12243), 2)$ we have
	$$\sM_0^\infty((\om,\kappa)) = ((2243), {\color{red}3}),\quad \sM_1^\infty(\hat\om)= (({\color{red}3}243), 2), \quad \sM_2^\infty(\hat\om)= ((1{\color{red}4}43), 2),$$
	$$\sM_3^\infty((\om,\kappa)) = ((12{\color{red}6}3), 2),\quad \sM_4^\infty(\hat\om)= ((122{\color{red}7}), 2), \quad \sM_5^\infty(\hat\om)= ((1224), {\color{red}5}),$$

	$$\sI_0^\infty((\om,\kappa)) =(({\color{red}2}12243), 2), \quad \sI_1^\infty(\hat\om) =((1{\color{red}2}2243), 2),\quad \sI_2^\infty(\hat\om) =((12{\color{red}2}243), 2),$$ 
	$$\sI_3^\infty((\om,\kappa)) =((122{\color{red}2}43), 2), \quad \sI_4^\infty(\hat\om) =((1224{\color{red}2}3), 2),  \quad \sI_5^\infty(\hat\om) =((12243{\color{red}2}), 2).$$

	Note that for example for $d =14$ we have 
	$|\sI_j((\om,\kappa))| = 16$ and hence
	$\mathring{\sR}_d^{\sI_j((\om,\kappa))} = \emptyset$ for all 
	$1 \leq j \leq \ell+1$ while all for all $1 \leq j \leq \ell-1$ we have
	$\mathring{\sR}_d^{\sM_j((\om,\kappa))} \neq \emptyset$.
\end{example}
\smallskip 

The next proposition collects some basic properties of the cells
$\mathring{\sR}^\omega_d$ and of their closures $\sR^\omega_d$ in $\cP_d$, see 
\cite[Theorem 4.1]{Ka1} for details. In \ref{lm1}, we will write 
$\mathbb H$ to denote the upper half-plane of the complex numbers with positive imaginary part,
$\Sym^m(X)$ for the $m$-fold symmetric product of a space $X$, and
$\Pi_m$ for the open cone $\{ (x_1,\ldots, x_m) \in \R^m~|~x_1 < \cdots < x_m\}$.
For later use, we also introduce $\Conf^m(X)$ as a notation for the configuration space of $m$ 
distinct unordered points in a space $X$. Note that $\Pi_m \cong \Conf^m(\R)$.

\begin{PROP}\label{lm1}
  Take $d \geq 1$ and $\om = (\om_1,\ldots, \om_\ell) \in \Om$ such 
  that $|\om | \le d$ and $|\om | \equiv d \mod 2$.
  Then $\mathring{\sR}^\omega_d\subset \cP_d$ is homeomorphic to 
  $\Pi_\ell \times \Sym^{\frac{d-|\om|}{2}}\mathbb H$. In particular, it is 
  an open cell of codimension $|\om|'$. 
  Moreover, 
	$\sR^\omega_d$ is the union of  the cells 
	$\mathring{\sR}_d^{\omega'}$, 
  taken over all $\omega'$ that are obtained from 
  $\omega$ by a sequence of merge and insert operations.
  In particular,
  \begin{itemize}
	  \item[(a)] The cell $\mathring{\sR}^\om_d$ has the maximal dimension $d$ if and only
       if, for 
       $0 \leq \ell \leq d$ and $\ell \equiv d \mod 2$, $\om=(\underbrace{1,1,\dots, 1}_{\ell})$. 
       
     \item[(b)] The cell $\mathring{\sR}^\om_d$ has the minimal possible dimension $1$ if
	     and only if $\om=(d)$. In this case, $\mathring{\sR}^{(d)} = 
	     \sR^{(d)} = \{ (x-a)^d~|~a \in \R\}$. 
  \end{itemize}
\end{PROP}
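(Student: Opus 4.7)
The plan is to describe $\mathring{\sR}^\omega_d$ directly by parametrizing the polynomials it contains by their ``root data,'' then analyze the closure $\sR^\omega_d$ by enumerating the elementary degenerations that can occur as coefficients converge.

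First I would construct the homeomorphism $\mathring{\sR}^\omega_d \cong \Pi_\ell \times \Sym^{m}\mathbb H$ with $m = (d-|\om|)/2$. Given $P \in \mathring{\sR}^\omega_d$, the real roots form an ordered tuple $x_1 < \cdots < x_\ell$ of \emph{distinct} points (their multiplicities are prescribed to be $\om_1, \ldots, \om_\ell$), parametrising $\Pi_\ell \cong \Conf^\ell(\R)$. The remaining $d-|\om|$ roots lie in $\C \setminus \R$ and are permuted by the conjugation $J$; choosing the representative of each $J$-orbit that lies in $\mathbb H$ gives an unordered configuration of $m$ points (with multiplicities) in $\mathbb H$, i.e., an element of $\Sym^m \mathbb H$. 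Conversely, the product $\prod_i (x-x_i)^{\om_i} \cdot \prod_j (x-z_j)(x-\bar z_j)^{\mu_j}$ reconstructs $P$ from such data. Continuity in both directions follows from Vi\`ete's formulas (one direction) and the continuous dependence of roots on coefficients (other direction). Counting dimensions: $\dim \Pi_\ell = \ell$ and $\dim \Sym^m \mathbb H = 2m = d - |\om|$, giving $\dim \mathring{\sR}^\omega_d = \ell + d - |\om| = d - |\om|'$, so the codimension in $\cP_d$ is $|\om|'$.

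Second, for the closure description, I would argue that a sequence $P_n \to P$ in $\cP_d$ with $P_n \in \mathring{\sR}^\om_d$ forces the root multisets to converge (in the Hausdorff sense on multisets of $\C$). Thus $P$ has some real root pattern $\om'$ and I would check that $\om'$ arises from $\om$ by a sequence of merge and insert operations by classifying what can happen in the limit: (i) two adjacent real roots $x_j, x_{j+1}$ with multiplicities $\om_j, \om_{j+1}$ may coalesce into a single real root of multiplicity $\om_j + \om_{j+1}$; this is the merge $\sM_j(\om)$ for $1 \leq j \leq \ell-1$; (ii) a $J$-conjugate pair $z_n, \bar z_n$ may approach a real point $a$, producing a real root of multiplicity $2$ at position $j$ relative to the surviving real roots; this is the insert $\sI_j(\om)$. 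Higher-order coalescences factor into sequences of these elementary steps. Conversely, any composition $\om'$ reached from $\om$ by such a sequence is realised in $\sR^\omega_d$ by deforming real roots together or pushing $\mathbb H$-points down to $\R$; this is a one-parameter construction inside the cell structure, reversing the two degenerations above.

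Finally, parts (a) and (b) follow from the codimension formula: maximal dimension $d$ means $|\om|' = 0$, i.e., $|\om| = \ell$, forcing every $\om_i = 1$, with the parity/inequality constraints $\ell \leq d$ and $\ell \equiv d \bmod 2$ coming from the non-emptiness hypothesis; minimal dimension $1$ means $|\om|' = d-1$, and since $|\om|' = |\om| - \ell \leq d - 1$ with equality iff $|\om| = d$ and $\ell = 1$, we get $\om = (d)$. In this case the polynomial has a single real root of multiplicity $d$ and no complex roots (since $m = 0$), so $P = (x-a)^d$ for some $a \in \R$. The main obstacle is the rigorous verification of the closure description, specifically showing that the elementary merge and insert degenerations generate all possible limits; this requires carefully tracking root configurations under coefficient convergence, but is essentially controlled by the fact that the assignment ``coefficients $\mapsto$ unordered root multiset'' is a homeomorphism $\cP_d \cong \Sym^d(\C)^J$ compatible with the stratification by real multiplicity patterns.
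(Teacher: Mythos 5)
The paper itself gives no proof of this proposition: it carries an alphabetic label precisely because it is quoted from the literature, and the sentence immediately preceding the statement refers the reader to Theorem~4.1 of \cite{Ka1} for details. The paragraph just after the statement records the same geometric picture you rely on (boundary degenerations of $\mathring{\sR}^\omega_d$ come from two adjacent real roots colliding, or from a complex-conjugate pair landing on $\R$), so your sketch is in line with the argument the paper points to. Two remarks on your write-up. First, the conjugate-pair factor in your reconstruction formula should read $\prod_{j}\bigl((x-z_j)(x-\bar z_j)\bigr)^{\mu_j}$, or simply $\prod_{j=1}^{m}(x-z_j)(x-\bar z_j)$ if you parametrize $\Sym^m\mathbb H$ by an unordered $m$-tuple with repetitions; as written, $z_j$ and $\bar z_j$ get unequal exponents, which is not a real polynomial. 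Second, the step you yourself flag as the main obstacle does need a genuine argument: several real roots together with several conjugate pairs may collapse simultaneously onto one real point, and one must show this coalescence unwinds into a chain of elementary merges and inserts---say by induction on the number of points in the $J$-symmetric divisor meeting at a given real location, peeling off one adjacent real pair or one conjugate pair at a time inside $\cP_d\cong\Sym^d(\C)^J$---and one should also observe that an $\om'$ produced in this way labels a nonempty stratum only when $|\om'|\le d$. With those details supplied, your argument correctly recovers the quoted proposition, including parts~(a) and~(b), whose deduction from the codimension formula you carry out cleanly.
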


From a geometric perspective, if a point in $\mathring{\sR}^\omega_d$ approaches the boundary $\sR_d^\om \setminus \mathring{\sR}^\omega_d$, 
then either there exist at least one value of the index $j$ such that the distance between the 
$j$\textsuperscript{th} and $(j+1)$\textsuperscript{st} distinct real roots 
goes to $0$, or there is a  value of $j$ such that two complex-conjugate 
non-real roots converge to a real root (which is then the $j$\textsuperscript{th} largest). 
The first situation corresponds to the application of the 
merge operation $\sM_j$ to $\omega$, and the second one to the application 
of the insertion operation $\sI_j$. 

For $d' \leq d$, under the embedding $\wp: \cP_{d'} \hookrightarrow \cB_d$, 
the cell $\mathring{\sR}_{d'}^\om$ is sent to 
$\mathring{\sR}_d^{(\om,\, d-d')}$. The image  $\wp(\cP_{d'})$ lies in the 
closed subspace $\cB_{d'} \cong \R\P^{d'}$ of $\cB_d$ of codimension $d-d'$. 
As described in \ref{lm1}, in addition to the cells 
$\mathring{\sR}_{d}^{(\om',\,d-d')}$, which are the $\wp$-images of the cells  
$\mathring{\sR}_{d'}^{\om'}$, the boundary of $\mathring{\sR}_{d'}^\om$   
additionally contains only the cells which lie in the $\wp$-image of 
$\cP_{d''}$ for some $d'' < d'$. The latter arise when the smallest or the 
largest real root of some $f\in \cP_{d'}$ approaches infinity. Since the 
latter situation is described by the merge operations $\sM_0^\infty$ and 
$\sM_\ell^\infty$ 
acting on marked compositions $((\om_1,\ldots,\om_\ell),\kappa)$, we obtain 
the following result.

\begin{COR}
  \label{cor:bd}
  Let $d \geq 1$,  $\om = (\om_1,\ldots, \om_\ell) \in \Om$, and let
  $(\om,\kappa) \in \Om^\infty$ be such 
  that $|(\om,\kappa) | \le d$ and $|(\om,\kappa) | \equiv d \mod 2$.
  Then $\mathring{\sR}^{(\om,\kappa)}_d\subset \cB_d$ is homeomorphic to 
  $\Pi_\ell \times \Sym^{\frac{d-|(\om,\kappa)|}{2}}\mathbb H$. In particular, 
  $\mathring{\sR}^{(\om,\kappa)}_d$ is an open cell of codimension $|(\om,\kappa)|'$. 
  
  Moreover, 
	$\sR^{(\omega,\kappa)}_d$ is the union of cells 
	$\mathring{\sR}_d^{(\omega',\kappa')}$, 
  taken over all $(\omega',\kappa')$ that are obtained from 
  $(\om,\kappa)$ by a sequence of merge and insert operations.
  In particular,
  \begin{itemize}
	  \item[(a)] the cell $\mathring{\sR}^{(\om,\kappa)}_d$ has the maximal dimension $d$ if and only
		  if 
		  $\om=(\underbrace{1,1,\dots, 1}_{\ell})$; 
       for 
       some $0 \leq \ell \leq d$, $\ell \equiv d \mod 2$ and  $\kappa=0$.
       
     \item[(b)] the cell $\mathring{\sR}^{(\om,\kappa)}_d$ has the minimal possible  dimension $0$ 
	     and only if $\om=()$ and $\kappa=d$. In this case, $\mathring{\sR}^{((),d)}_d = 
	     \cP_0 = \{ a~|~a \in \R \setminus\{0\}\}/\sim$. 
  \end{itemize}
\end{COR}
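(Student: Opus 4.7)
The plan is to reduce everything to \ref{lm1} via the embedding $\wp: \cP_{d'} \hookrightarrow \cB_d$ from \ref{eq.polys-forms}, with $d' := d - \kappa$. The hypotheses on $(\om,\kappa)$ translate exactly into $|\om| \le d'$ and $|\om| \equiv d' \mod 2$, and by construction $\mathring{\sR}^{(\om,\kappa)}_d = \wp(\mathring{\sR}^\om_{d'})$. Therefore \ref{lm1} directly yields the homeomorphism
$$\mathring{\sR}^{(\om,\kappa)}_d \;\cong\; \Pi_\ell \times \Sym^{(d' - |\om|)/2}\mathbb H \;=\; \Pi_\ell \times \Sym^{(d - |(\om,\kappa)|)/2}\mathbb H.$$
For the codimension in $\cB_d$, I would combine two observations: $\wp(\cP_{d'})$ is an open piece of a linearly embedded $\R\P^{d'} \subset \cB_d$, hence of codimension $\kappa$; and $\mathring{\sR}^\om_{d'}$ has codimension $|\om|'$ in $\cP_{d'}$ by \ref{lm1}. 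Summing gives codimension $\kappa + |\om|' = |(\om,\kappa)|'$ in $\cB_d$.

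To describe $\sR^{(\om,\kappa)}_d$, I would stratify $\cB_d = \bigsqcup_{d'' = 0}^{d} \wp(\cP_{d''})$ and analyze each intersection separately. At the top level $d'' = d'$, the intersection $\sR^{(\om,\kappa)}_d \cap \wp(\cP_{d'})$ is $\wp(\sR^\om_{d'})$, which by \ref{lm1} is the union of cells $\mathring{\sR}^{(\om',\kappa)}_d$ for $\om'$ obtained from $\om$ by the unmarked merges $\sM_1, \ldots, \sM_{\ell-1}$ and the inserts $\sI_1, \ldots, \sI_{\ell+1}$. For $d'' < d'$, a boundary point arises only when the leftmost or the rightmost real root of a sequence in $\mathring{\sR}^\om_{d'}$ escapes to $\pm\infty$; as explained in the paragraph preceding the corollary, such a degeneration is encoded precisely by the marked merges $\sM_0^\infty$ and $\sM_\ell^\infty$, which fold $\om_1$ (resp.\ $\om_\ell$) into $\kappa$. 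Induction on $d - d''$ then shows that $\sR^{(\om,\kappa)}_d$ is exactly the union of cells reachable from $(\om,\kappa)$ by arbitrary finite sequences of the operations $\sM_j^\infty$, $0 \le j \le \ell$, and $\sI_j^\infty$, $1 \le j \le \ell+1$.

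Parts (a) and (b) are arithmetic consequences of the dimension formula $\dim \mathring{\sR}^{(\om,\kappa)}_d = d - |(\om,\kappa)|' = (d - |\om| - \kappa) + \ell$. For (a), requiring dimension $d$ forces $|\om|' + \kappa = 0$, whence $\kappa = 0$ and every part of $\om$ equals $1$; the admissibility bounds $0 \le \ell \le d$ and $\ell \equiv d \mod 2$ carry over from the existence hypotheses on $(\om,0)$. For (b), the constraint $|\om| + \kappa \le d$ gives $\dim \ge \ell \ge 0$, with equality if and only if $\ell = 0$ and $|\om| + \kappa = d$, which forces $\om = ()$ and $\kappa = d$; the identification $\mathring{\sR}^{((),d)}_d = \cP_0$ is then immediate from the definition of $\wp$.

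The only step that is not pure bookkeeping on top of \ref{lm1} is the identification of the root-at-infinity degenerations with the marked merges $\sM_0^\infty$ and $\sM_\ell^\infty$. This requires passing to the projective chart in which $y$ is allowed to vanish and verifying that a sequence of real roots of $f^\dagger \in \cP_{d'}$ tending to $\pm\infty$ produces, after homogenization, a root of multiplicity equal to the number of escaping roots at the point $[1:0] \in \R\P^1$. This step is essentially telegraphed by the authors in the discussion immediately preceding the corollary, and once granted, everything else reduces to the analogous statement in $\cP_{d'}$.
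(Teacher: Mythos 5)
Your proposal is correct and follows essentially the same route as the paper: the authors also deduce \ref{cor:bd} directly from \ref{lm1} via the embedding $\wp$, observing that $\wp(\cP_{d'})$ sits in $\cB_{d'}\subset\cB_d$ with codimension $\kappa = d-d'$, and that the only new boundary phenomena in $\cB_d$ are escapes of the extremal real roots to infinity, which are exactly what $\sM_0^\infty$ and $\sM_\ell^\infty$ encode. The stratum-by-stratum inductive bookkeeping you add and the arithmetic for (a) and (b) match the paper's (terser) discussion immediately preceding the corollary.
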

 
 \smallskip

Note that the norms $|\om|$ and $|(\om, \kappa)|$ are preserved under the merge operations for cells both in $\cP_d$ and in $\cB_d$. In both cases, the insert operations increase the norm by $2$ and thus preserve its parity.  \smallskip
  
The merge and insert operations can be used to define a natural 
\emph{partial order} ``$\prec$''  on the sets $\Om$ and $\Om^\infty$. 
It reflects the cellular structure, described in \ref{lm1} and \ref{cor:bd}.

\smallskip
\begin{definition}\label{def2.1} 
  For $\omega, \omega' \in \Om$, we set $\omega' \prec \omega$, if $\omega'$ can be 
  obtained from $\omega$ by a sequence of merge operations, $\sM_j$, $j \geq 1$, and  
  insert operations $\sI_j$, $j \geq 0$. 
  
  Analogously, for $(\om,\kappa), (\om',\kappa') \in \Om^\infty$, we set 
  $(\om',\kappa') \prec (\om,\kappa)$, if $(\om',\kappa')$ can be 
  obtained from $(\om,\kappa)$ by a sequence of merge operations, $\sM_j^\infty$, $j \geq 0$, and  insert operations $\sI_j^\infty$, $j \geq 0$. 

  If $\om' \prec \om$ or $(\om',\kappa') \prec (\om,\kappa)$, then we say that $\om'$ is smaller than $\om$ or that $(\om',\kappa')$ is smaller than $(\om,\kappa)$. \hfill $\diamondsuit$
\end{definition}

One can  easily check  that  ``$\prec$'' defines partial orders on $\Om$ and 
$\Om^\infty$. 
From now on, we will consider a subset $\Theta \subseteq \Om$ or 
$\Theta \subseteq \Om^\infty$ as a partially ordered set, \emph{poset} for 
short, ordered by ''$\prec$.''
\ref{lm1} and \ref{cor:bd} imply instantly the following two statements.

\begin{COR} \label{cor:cw-structure}
  For a subset $\Theta \subseteq \Om_{\langle d]}$,
  \begin{itemize}
    \item[(i)] $\cP^\Theta_d$ is closed in $\cP_d$ if and only if,
     for any $\omega \in \Theta$ and $\omega' \prec \omega$, we have
     $\omega' \in \Theta$,\smallskip
     
    \item[(ii)] if $\cP^\Theta_d$ is closed in $\cP_d$, then 
	    $\bar{\cP}^\Theta_d$ carries the structure of a compact CW-complex 
		  with open cells $\mathring{\sR}_d^\omega$, labeled by
		  $\omega \in \Theta$, and the unique $0$-cell, represented by the  point $\infty$. 
\end{itemize}
\end{COR}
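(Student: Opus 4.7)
The plan is to derive (i) as an immediate combinatorial consequence of \ref{lm1}, and then to obtain the CW structure in (ii) by extending the semi-algebraic cell stratification of $\cP_d$ to its one-point compactification and restricting to the subcomplex determined by $\Theta$.

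For (i), since $\Om_{\langle d]}$ is finite, we may write $\cP_d^\Theta = \bigsqcup_{\omega\in\Theta}\mathring{\sR}_d^\omega$ as a finite disjoint union of locally closed strata (pairwise distinct root multiplicity patterns produce pairwise disjoint strata). Its closure in $\cP_d$ equals $\bigcup_{\omega\in\Theta}\sR_d^\omega$, which by \ref{lm1} further decomposes as $\bigsqcup_{\omega\in\Theta,\,\omega'\prec\omega}\mathring{\sR}_d^{\omega'}$. Consequently $\cP_d^\Theta$ coincides with its closure if and only if, for every $\omega\in\Theta$ and every $\omega'\prec\omega$, the index $\omega'$ already lies in $\Theta$; this is precisely the asserted downward-closedness.

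For (ii), assume $\Theta$ is downward closed. \ref{lm1} equips each $\mathring{\sR}_d^\omega$ with the structure of an open cell of dimension $n_\omega := d-|\omega|'$, whose topological boundary in $\cP_d$ is contained in the union of strictly lower-dimensional strata. The family $\{\sR_d^\omega\}_{\omega\in\Om_{\langle d]}}$ being finite and consisting of closed semi-algebraic subsets of $\cP_d$, semi-algebraic triangulation (after Lojasiewicz and Hironaka) yields a CW structure on $\cP_d\cong\R^d$ whose open cells are precisely the strata $\mathring{\sR}_d^\omega$. I would then adjoin $\infty$ as the unique $0$-cell of the one-point compactification $\bar{\cP}_d\cong S^d$---permissible since by \ref{lm1}(b) no stratum in $\cP_d$ has dimension $0$---and extend the characteristic map $\varphi_\omega\colon D^{n_\omega}\to\sR_d^\omega$ of each unbounded cell so that the part of $\partial D^{n_\omega}$ parametrizing ``escape to infinity'' collapses onto $\{\infty\}$. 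Downward closedness of $\Theta$ guarantees that $\{\mathring{\sR}_d^\omega\}_{\omega\in\Theta}\sqcup\{\infty\}$ is a CW subcomplex of $\bar{\cP}_d$, and the finiteness of $\Theta$ makes this subcomplex compact.

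The main technical obstacle is the extension to the point at infinity: one must verify that the radial extension of each characteristic map $\varphi_\omega$ remains continuous as polynomials in $\mathring{\sR}_d^\omega$ diverge in $\cP_d$, i.e., that the preimages in $D^{n_\omega}$ of sequences escaping to infinity form a well-defined compact subset of $\partial D^{n_\omega}$ that maps to $\{\infty\}$. This amounts to standard cone-at-infinity analysis for the semi-algebraic set $\sR_d^\omega\subset\R^d$. Once this is established, the CW axioms of closure-finiteness and the weak topology follow automatically from the finiteness of $\Theta$ and compactness of $\bar{\cP}_d^\Theta$.
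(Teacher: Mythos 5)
Part (i) of your argument is correct and is essentially the proof the paper intends: the closure of the finite union $\cP_d^\Theta = \bigsqcup_{\omega\in\Theta}\mathring{\sR}_d^\omega$ is $\bigcup_{\omega\in\Theta}\sR_d^\omega$, and \ref{lm1} identifies the latter with $\bigsqcup_{\omega\in\Theta,\,\omega'\preceq\omega}\mathring{\sR}_d^{\omega'}$, so closedness is equivalent to downward-closedness of $\Theta$. The paper gives no written proof at all, stating only that \ref{lm1} and \ref{cor:bd} imply the corollary ``instantly''; your (i) makes that inference explicit and correctly.

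Part (ii) contains a genuine gap. You write that semi-algebraic triangulation ``yields a CW structure on $\cP_d\cong\R^d$ whose open cells are precisely the strata $\mathring{\sR}_d^\omega$.'' That statement is false as written: a semi-algebraic triangulation compatible with a finite semi-algebraic stratification produces a \emph{simplicial} complex in which each stratum is a finite union of open simplices, not a single cell of the triangulation. To obtain a CW structure whose cells are literally the strata, one has to supply, for each $\omega$, a characteristic map $D^{n_\omega}\to\bar{\cP}_d$ taking the open disc homeomorphically onto $\mathring{\sR}_d^\omega$ and the boundary sphere continuously into the union of lower strata and $\infty$. This is not a formal consequence of having a compatible triangulation; it uses the locally conical structure of the stratification of $\sR_d^\omega$ (at interior boundary points and at infinity). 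The paper delegates this geometric input to Theorem~4.1 of \cite{Ka1}, which is where \ref{lm1} is established. Your plan correctly identifies the cone-at-infinity analysis as the delicate point, but the triangulation step you insert before it does not actually produce the required cell structure, so the argument as written does not close. A repaired version should either (a) cite the result from \cite{Ka1} directly, or (b) replace the triangulation step with an explicit construction of the characteristic maps using the product description $\mathring{\sR}_d^\omega\cong\Pi_{\ell}\times\Sym^{(d-|\omega|)/2}(\mathbb H)$ from \ref{lm1} together with the explicit degenerations (merges/inserts) that govern the closure.
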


\begin{COR} \label{cor:cw-structurebd}
  For a subset $\Theta \subseteq \Om^\infty_{\langle d]}$, 
  \begin{itemize}
    \item[(i)] $\cB^\Theta_d$ is closed in $\cB_d$ if and only if,
     for any $(\omega, \kappa) \in \Theta$ and $(\omega',\kappa') \prec (\omega, \kappa)$, we have
     $(\omega',\kappa') \in \Theta$, \smallskip
     
    \item[(ii)] if $\cB^\Theta_d$ is closed in $\cB_d$, then 
	    $\cB^\Theta_d$ carries the structure of a compact CW-complex 
		  with open cells $\mathring{\sR}_d^{(\om,\kappa)}$, labeled by
		  $(\om,\kappa) \in \Theta$.
  \end{itemize}
\end{COR}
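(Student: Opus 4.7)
The plan is to deduce both parts directly from \ref{cor:bd}, paralleling the proof of the preceding \ref{cor:cw-structure} for $\cP_d$. The key input is that \ref{cor:bd} gives a complete description of $\sR_d^{(\omega,\kappa)}$ as the union of the cells $\mathring{\sR}_d^{(\omega',\kappa')}$ with $(\omega',\kappa') \prec (\omega,\kappa)$, together with the identification of each cell as an open topological ball via the homeomorphism with $\Pi_\ell \times \Sym^{(d-|(\omega,\kappa)|)/2}\mathbb H$.

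For the forward direction of~(i), assume $\cB_d^\Theta$ is closed, and fix $(\omega,\kappa) \in \Theta$ together with some $(\omega',\kappa') \prec (\omega,\kappa)$. From $\mathring{\sR}_d^{(\omega,\kappa)} \subseteq \cB_d^\Theta$ and the closedness of $\cB_d^\Theta$, we obtain $\sR_d^{(\omega,\kappa)} \subseteq \cB_d^\Theta$. By \ref{cor:bd}, the cell $\mathring{\sR}_d^{(\omega',\kappa')}$ is contained in $\sR_d^{(\omega,\kappa)}$, hence in $\cB_d^\Theta$; since the strata are pairwise disjoint and $\cB_d^\Theta = \bigsqcup_{(\omega'',\kappa'') \in \Theta} \mathring{\sR}_d^{(\omega'',\kappa'')}$, this forces $(\omega',\kappa') \in \Theta$. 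For the converse, assume $\Theta$ is downward-closed under $\prec$. Using \ref{cor:bd},
$$\cB_d^\Theta = \bigsqcup_{(\omega,\kappa) \in \Theta} \mathring{\sR}_d^{(\omega,\kappa)} = \bigcup_{(\omega,\kappa) \in \Theta} \sR_d^{(\omega,\kappa)},$$
a finite union of compact subsets of the Hausdorff space $\cB_d$, hence closed.

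For part~(ii), assume $\Theta$ is downward-closed. The cells $\mathring{\sR}_d^{(\omega,\kappa)}$ with $(\omega,\kappa) \in \Theta$ give a finite disjoint decomposition of $\cB_d^\Theta$; by \ref{cor:bd} each is a cell of dimension $d - |(\omega,\kappa)|'$, and the boundary of each lies in the union of strictly lower-dimensional strata, all of which remain in $\Theta$. Finiteness of the poset $\Om_{\langle d]}^\infty$ makes closure-finiteness and the weak-topology axiom automatic, so $\cB_d^\Theta$ inherits the structure of a compact CW-subcomplex of $\cB_d$ with the advertised cells.

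The main technical obstacle is verifying that each $\mathring{\sR}_d^{(\omega,\kappa)}$ is genuinely an open topological ball admitting a continuous characteristic map from a closed disk. The factor $\Pi_\ell$ is handled by the elementary change of variables $(x_1,\ldots,x_\ell) \mapsto (x_1, x_2-x_1,\ldots,x_\ell-x_{\ell-1})$, which identifies $\Pi_\ell$ with $\R \times \R_{>0}^{\ell-1} \cong \R^\ell$. The factor $\Sym^m \mathbb H$, however, only appears a priori as an open subset of $\mathbb C^m \cong \R^{2m}$ via elementary symmetric functions, so one must separately invoke its homeomorphism with a Euclidean space; this is essentially supplied by \cite[Theorem~4.1]{Ka1}, on which \ref{lm1} rests. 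The extension of the characteristic maps across the boundary then follows from the classical continuity of roots as functions of the coefficients of the form, combined with the explicit closure description in \ref{cor:bd}.
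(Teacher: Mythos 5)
Your proposal is correct and follows the same approach the paper intends: the paper states that \ref{cor:cw-structurebd} follows ``instantly'' from \ref{lm1} and \ref{cor:bd} and offers no separate proof, and your argument is the natural unpacking of that claim — using the closure description in \ref{cor:bd}, the disjointness of the cells, and the finiteness of $\Om_{\langle d]}^\infty$ to get both the closedness characterization and the CW-structure.
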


\ref{cor:cw-structure} and \ref{cor:cw-structurebd} motivate the following definition.

\begin{definition}
 A subposet $\Theta \subseteq \Om$  is  called 
  \emph{closed} if, for any $\om' \prec \om$ and $\om \in \Theta$, we have $\om' \in \Theta$. 
  
 Similarly, a subposet $\Theta \subseteq \Om^\infty$ is  called 
  \emph{closed} if, for any $(\om', \kappa') \prec (\om, \kappa)$ and $(\om, \kappa) \in \Theta$, we have $(\om', \kappa') \in \Theta$. 
\end{definition}

Revisiting the beginning of \ref{sec:intro}, the closed posets $\Theta$ yield 
exactly the spaces $\bar{\cP}^\Theta_d$, $\cP^{\bfc \Theta}_d$ and 
$\cB^\Theta_d$, $\cB^{\bfc \Theta}_d$ we intend to study.
  
In  \ref{sec:combcomplex} below,  given  \emph{any} closed poset 
$\Theta \subset \Om_{\langle d]}$ or $\Theta \subseteq \Om_{\langle d]}^\infty$,
we introduce a {\it combinatorial model} for the cellular chain complex that 
calculates the reduced homology of $\bar{\cP}_d^{\Theta}$ or $\cB_d^\Theta$.
\smallskip

However, for a {\it general} closed poset $\Theta \subset \Om_{\langle d]}$, it is  
impossible to obtain a {\it closed formula} description of the homology 
$\tilde H_*(\bar{\cP}_d^{\Theta};\Z)$!   
  To justify this ``metaphysical'' claim, consider the closed subposet 
$\Om_d \subseteq \Om_{\langle d ]}$ of all 
$\om \in \Om_{\langle d ]}$ with $|\om| = d$.
The  space $\cP^{\Om_d}_{d}$ is the space of all real monic polynomials of 
degree $d$, having only real roots. 
We can identify $\cP^{\Om_d}_d$ with $\{ (x_1,\ldots, x_d)\in \R^d ~|~
x_1 < \cdots < x_d\}$, the closure of $\Pi_d$. 

As a result, the cellular structure of $\cP^{\Om_d}_d$ is the facial structure of the 
product of a $(d-1)$-dimensional cone $\mathcal C\Delta^{n-2}$ over a closed simplicial base $\Delta^{n-2}$ with a copy of the real line $\R$.   
The compositions in $\Om_d$ are in an order-preserving bijection with the 
open faces of the simplicial cone $\mathcal C\Delta^{n-2}$, so that $\mathring{\sR}^\om$ is the face corresponding to $\om$.
Since the face poset of $\mathcal C\Delta^{n-2}$ is the Boolean 
lattice of subsets of $\{1,\ldots,d-1\}$, it follows that one can identify 
simplicial complexes $K$ with vertex set contained in $\{1,\ldots, d-1\}$ and 
closed subposets of $\Om_d$. 
Then, for a closed subposet $\Theta \subseteq \Om_d$, the one-point 
compactification 
$\bar{\cP}^{\Theta}_{d}$ is a double suspension the corresponding simplicial 
complex $K$ whose vertexes are among $\{1,\ldots, d-1\}$. This situation is 
considered in details in earlier paper of the second and the third author 
\cite{SW}. 

In particular, for arbitrary closed subposets 
$\Theta \subseteq \Om_d$, the spaces $\bar{\cP}_d^{\Theta}$ can, up to a shift 
by $2$ in the (co)homological dimension, 
carry any homology that a simplicial complex on $d-1$ vertices can carry.
Since $S^d \cong \bar{\cP}_d$, Alexander duality shows that the cohomology of 
$\cP^{\bfc \Theta}$ can have a similarly arbitrary complex structure! In 
particular, arbitrary torsion may occur.   
 
 \smallskip

However, let us stress that in this example, the dimension of the simplicial complex, 
corresponding to $\Theta$, and the degree $d$ of the polynomials under 
consideration are closely linked. When it is possible to loosen this link, 
then, as we shall see in \ref {sec:stab}, the (co)homological stabilization 
takes place, and quite ``tame'' answers for the limiting homology 
$\tilde H_\ast (\bar{\cP}_d^{\Theta},\Z)$ or 
$\tilde H_\ast ({\cB}_d^{\Theta},\Z)$ emerge as $d\to \infty$. 
\medskip

Before we further pursue our topological investigation, we would like to 
present a few combinatorial facts about the number of cells in $\bar{\cP}_d$ 
and $\cB_d$. Let $p_j^d$ be the number of cells in $\bar{\cP}_d$ of 
dimension $j$, and let $b_j^d$ be the number of cells in $\cB_d$ of dimension 
$j$.
By \ref{lm1} and \ref{cor:bd}, we know that $p_0^d = 1$ and, for $j\ge 1$,  
$p_j^d$ counts the compositions $\om \in \Om_{\langle d]}$ with 
$d-|\om|' = j$.  Similarly, for $j \geq 0$, $b_j^d$ counts the marked 
compositions $(\om,\kappa) \in \Om_{\langle d]}^\infty$ with 
$d-|(\om,\kappa)|' = j$.

\smallskip
We introduce the generating $t$-polynomials 
$$G(\bar{\cP}_d;t)=\sum_{j=0}^d p_j^d\,  t^j; \quad \text{and}\quad G(\cB_d;t)=\sum_{j=0}^d b_j^d\,  t^j.$$
Interpreting the value of the generating functions at $t=-1$ as the Euler characteristics of $\bar{\cP}_d \cong S^d$ and $\cB_d \cong \P\R^d$, we get that 
$G(\bar{\cP}_d,-1)=1+(-1)^d$ and $ G(\cB_d;-1)=\frac{1}{2} (1+(-1)^d)$.
\ref{lm1}, (a) implies that $p_d^d=\lfloor\frac{d}{2}\rfloor+1$.

Since $\cB_d = \cP_d \sqcup \cB_{d-1}$, we get for $d \geq 1$ 
the recurrence relation
$$G(\cB_d;t)=G(\cB_{d-1};t)+(G(\bar{\cP}_d,t)-1).$$ 
This recurrence  implies   the following relation for the $(t, x)$-generating functions
$$\sum_{d \geq 0} G(\cB_d,t) x^d = 1 + x \sum_{d \geq 0} G(\cB_{d},t) \,x^d + \sum_{d \geq 0} G(\bar{\cP}_d,t)x^d - 2 - \frac{x}{1-x}.$$
Note that the $2$ in the recursion comes from $\bar{\cP}_0$ consisting of
$2$ points. 
In particular, $$\sum_{d \geq 0} G(\cB_d,t) x^d = \frac{1}{1-x} \Big( -1+ \sum_{d \geq 0} G(\bar{\cP}_d,t)x^d -\frac{x}{1-x}\Big).$$ 
Now, for $j \geq 1$, $p_j^d$ equals  the number of compositions of a number $d' \leq d$ with even $d-d'$  and $j-(d-d')$ parts. Thus  for $j \geq 1$,  
$$p_j^d = \sum_{\genfrac{}{}{0pt}{}{d-j\, \leq\, d' \,\leq \,  d}{d-d' \text{ even}}} \binom{d'-1}{j-d+d'-1} = \sum_{k=0}^{\min\{\lfloor\frac{j-1}{2}\rfloor,\, \lfloor \frac{d-1}{2} \rfloor\}}
\binom{d-1-2k}{j-1-2k}.$$ \smallskip
   The latter observations implies via some standard calculations of 
   generating functions the following result. 

\smallskip
\begin{lemma}\label{lm:count} The number of cells in $\bar{\cP}_d$ and $\cB_d$ is given by the following generating functions:
  \begin{itemize}
    \item[(i)] 
	    $$ \sum_{d=0}^\infty G(\bar{\cP}_d;t)x^d =
	    {\frac {{t}^{3}{x}^{3}+{t}^{2}{x}^{3}-{x}^{2}{t}^{2}-tx+{x}^{2}-3\,x+
2}{ \left( 1-t^2 x^2 \right)    \left( 1-x-tx \right)
 \left( 1-x \right) }}
 $$

    \item[(ii)] 
	    $$\sum_{d=0}^\infty G(\cB_d,t) x^d = 
	    {\frac {1}{  \left( 1-t^2 x^2 \right) \left( 1-x-tx \right)   }}$$
       
    \item[(iii)] For any positive odd $d$, $G(\bar{\cP}_d;t)-1=t \, G(\cB_{d-1})$, while for any positive even $d$,\hfill \break $G(\bar{\cP_d};t)-1=t\, G(\cB_{d-1})+t^d$.  \hfill $\diamondsuit$
  \end{itemize}
\end{lemma}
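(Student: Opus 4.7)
My plan is to follow the scheme outlined in the paragraph preceding the lemma: use the combinatorial formula for $p_j^d$ to derive the generating function $P(x) := \sum_d G(\bar{\cP}_d;t)x^d$, obtain $B(x) := \sum_d G(\cB_d;t)x^d$ from the recurrence already set up there, and then extract (iii).

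For part (i), I would partition the cells of $\bar{\cP}_d$ into three families and sum their contributions to $P(x)$. The point at infinity, present for every $d \geq 0$, contributes $\frac{1}{1-x}$. The cell $\mathring{\sR}_d^{()}$ associated with the empty composition exists precisely when $d$ is even, has dimension $d$, and therefore contributes $\sum_{d \text{ even}} t^d x^d = \frac{1}{1-t^2x^2}$. The cells indexed by compositions $\om$ with $\ell(\om) \geq 1$ are counted by the paper's formula for $p_j^d$; after the substitution $m=d-1-2k$, $n=j-1-2k$ and the identity $\sum_m (1+t)^m x^m = \frac{1}{1-x-tx}$, their total contribution collapses to $\frac{tx}{(1-t^2x^2)(1-x-tx)}$. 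Adding the three terms over the common denominator $(1-x)(1-t^2x^2)(1-x-tx)$ yields the numerator claimed in (i).

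For part (ii), I would substitute (i) into the identity $B(x)(1-x) = P(x) - 1 - \frac{x}{1-x}$ derived in the paper from the recurrence $G(\cB_d;t) = G(\cB_{d-1};t) + (G(\bar{\cP}_d;t)-1)$. The cancellation $\frac{1}{1-x} - 1 - \frac{x}{1-x} = 0$ collapses the right hand side to $\frac{1}{1-t^2x^2} + \frac{tx}{(1-t^2x^2)(1-x-tx)} = \frac{1-x}{(1-t^2x^2)(1-x-tx)}$, and division by $1-x$ gives (ii).

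For part (iii) I prefer a bijective proof. The map $\om = (\om_1,\ldots,\om_\ell) \mapsto ((\om_1,\ldots,\om_{\ell-1}),\, \om_\ell - 1)$ is a bijection between compositions $\om \in \Om_{\langle d]}$ with $\ell \geq 1$ and marked compositions in $\Om_{\langle d-1]}^\infty$, and a direct computation shows it preserves the reduced norm. Consequently the dimension of $\mathring{\sR}_d^\om$ in $\cP_d$ exceeds the dimension of its image in $\cB_{d-1}$ by exactly $1$, so the non-empty-composition contribution to $G(\cP_d;t)$ equals $t\,G(\cB_{d-1};t)$. The only composition in $\Om_{\langle d]}$ left out is the empty one, present only when $d$ is even and contributing $t^d$; combined with $G(\bar{\cP}_d;t) - 1 = G(\cP_d;t)$ for $d \geq 1$, this yields (iii). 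The main obstacle throughout is the careful bookkeeping of the three exceptional cases — the point at infinity, the empty composition, and the $d=0$ initial value — which the formula for $p_j^d$ does not treat uniformly; once these are separated, everything reduces to standard geometric-series manipulation.
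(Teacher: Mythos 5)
Your proof is correct, and for parts (i) and (ii) it supplies exactly the ``standard calculations of generating functions'' that the paper elides, following the same route: the combinatorial formula for $p_j^d$ and the recurrence $G(\cB_d;t) = G(\cB_{d-1};t) + (G(\bar{\cP}_d;t)-1)$. One genuinely useful refinement is your decision to treat the cell of the empty composition $()$ as its own geometric series $\tfrac{1}{1-t^2x^2}$ rather than trying to read it off from the paper's closed-form expression for $p_j^d$: that expression, as written, stops just short of the $d'=0$ term when $j=d$ and $d$ is even, so it undercounts $p_d^d$ by one (for instance it gives $p_2^2 = 1$, whereas $G(\bar{\cP}_2;t)=1+t+2t^2$ requires $p_2^2=2$), and your separate bookkeeping of the three exceptional families (the point at infinity, $()$, and $\ell\ge 1$) avoids that pitfall cleanly. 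For part (iii), the paper's implicit route would be to read the relation off coefficientwise from $P(x) - \tfrac{1}{1-x} - tx\,B(x) = \tfrac{1}{1-t^2x^2}$, which follows from (i) and (ii); your bijection $\om = (\om_1,\dots,\om_\ell) \mapsto \big((\om_1,\dots,\om_{\ell-1}),\,\om_\ell-1\big)$ between nonempty compositions in $\Om_{\langle d]}$ and marked compositions in $\Om^\infty_{\langle d-1]}$ is a more illuminating alternative, since it explains geometrically why the reduced norm is preserved while the ambient dimension drops by exactly one, which is where the overall factor of $t$ comes from.
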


\noindent
\begin{example} $G(\cB_0;t)=1$, \; $G(\cB_1;t)=1+t$,  \;$G(\cB_2;t)=1+2t+2t^2$, \; $G(\cB_3;t)=1+3t+4t^2+2t^3$,  \; \;$G(\cB_4;t)=1+4t+7t^2+6t^3+3t^4$,  \hfill\break  $G(\cB_5;t)=1+5t + 11t^2 + 13t^3 + 9t^4 + 3t^5$. 

 \smallskip
\noindent
	$G(\bar{\cP}_1;t)=1+t$, \; $G(\bar{\cP}_2;t)=1+t+2t^2$, \; $G(\bar{\cP}_3;t)=1+t+2t^2+2t^3$,   \hfill\break
	$G(\bar{\cP_4};t)=1+t+3t^2+4t^3+3t^4$, \; $G(\bar{\cP}_5;t)=1+t+4t^2+7t^3+6t^4+3t^5$,  \hfill\break
	$G(\bar{\cP}_6;t)=1+t+5t^2 + 11t^3 + 13t^4 + 9t^5 + 4t^6$. \hfill $\diamondsuit$
 \end{example}

  The coefficient sequences of $G(\bar{\cP}_d,t)$ and $G(\cB_d,t)$, 
  up to a shift in indexing and removals of trailing or leading $1$s, 
  also has appeared in other contexts (see A035317 in \cite{Sl}).

\section[Combinatorial differential complexes]{Combinatorial differential complexes for  
\texorpdfstring{$\tilde H_\ast(\cB_d^{\Theta}; \Z)$ and $\tilde H_\ast(\bar{\cP}_d^{\Theta}; \Z)$}{Lg}}
\label{sec:combcomplex}


In this section, we use the natural CW-structure on 
$\cB_d$ and $\bar{\cP}_d$, described in \ref{cor:bd} and \ref{lm1}, to 
construct {\it combinatorial differential complexes} that calculate 
the homology of $\cB_d^\Theta$ and $\bar{\cP}^\Theta_d$ 
for any given closed subposet $\Theta \subset \Om_{\langle d]}^\infty$ or
$\Theta \subset \Om_{\langle d]}$. 
\smallskip

Recall that the cells $\mathring{\sR}^{(\om,\kappa)}_d$ of the CW-complex  
$\cB_d$ are indexed by marked compositions $(\om,\kappa) 
\in \Om_{\langle d]}^\infty$ and that 
the dimension of the cell $\mathring{\sR}_d^{(\om,\kappa)}$ equals $d-|(\om,\kappa)|' = d - |\omega|'-\kappa$. 
\smallskip

\medskip
Next, we provide an explicit description of the cellular chain complex of the CW-complex $\cB_d^\Theta$.
From now on, for $\Theta \subseteq \Om_{\langle d]}$, 
or $\Theta \subseteq \Om_{\langle d]}^ \infty$, 
we write $\Z[\Theta]$ for the abelian group or equivalently $\Z$-module, freely generated 
by the elements of $\Theta$. For $\Theta \subseteq \Om^\infty$ we define $\Theta_{|\sim|' \leq k}$ (resp., 
$\Theta_{|\sim|' =  k}$) as the set of all (marked) compositions $(\om,\kappa) \in \Theta$ with $|(\om,\kappa)|' \leq k$ (resp., $|(\om,\kappa)|' = k$). 
We use the analogous converntions for $\Theta \subseteq \Omega_d$.
 \smallskip


For a composition $\om = (\om_1,\ldots,\om_\ell)$,  we denote by $s_\om$ the 
number  of its parts, i.e. $s_\om=\ell$. Using the merge operators $\sM$ and the insert 
operators $\sI$, introduced in  \ref{sec:cellstructure}, we define two 
homomorphisms which act on $\Z[\Theta]$.  They are given by 
$$\d_{\sM}^\infty ((\omega,\kappa)) := - \sum_{k=0}^{s_\omega} (-1)^k \sM_k^\infty((\omega,\kappa)) \, \text{  and   }\, \d_{\sI}^\infty ((\omega,\kappa)) :=  \sum_{k=0}^{s_\omega} (-1)^k \sI_k^\infty((\omega,\kappa)).$$

Next, we define a homomorphism 
$$\d^\infty  = \d_{\sM}^\infty +\d_{\sI}^\infty : \Z[\Theta] \to \Z[\Theta]$$ by the formula
\begin{equation} \label{eq13.7} \d^\infty((\omega,\kappa)):= 
\begin{cases}
-\sum_{k = 0}^{s_\omega}(-1)^k\, \sM_k^\infty((\omega,\kappa)) + \sum_{k = 0}^{s_\omega}(-1)^k\, \sI_k^\infty((\omega,\kappa)),\;\; \text{for}\; |\omega| < d,   \\
-\sum_{k = 0}^{s_\omega}(-1)^k\, \sM_k^\infty((\omega,\kappa)),\;\; \text{for}\; |\omega| = d. 
\end{cases}
\end{equation}
\smallskip


\medskip

\begin{lemma}\label{lem13.8} 
       For any closed $\Theta \subseteq \Om_{\langle d]}^ \infty$, 
       the homomorphisms 
       $\d_{\sM}^\infty, \d_{\sI}^\infty: \Z[\Theta] \to \Z[\Theta]$ are 
       anticommuting differentials, i.e.  $(\d_{\sI}^\infty)^2=(\d_{\sM}^\infty)^2 = \d_{\sI}^\infty \d_{\sM}^\infty+ \d_{\sM}^\infty \d_{\sI}^\infty =0$.
       
      Thus, $\d^\infty = \d_{\sM}^\infty + \d_{\sI}^\infty$ is a differential as well, and $(\Z[\Theta], \d)$ is a graded differential complex,
		whose $j$\textsuperscript{th} graded part is
		$\ZZ[\Theta_{|\sim|'=d-j}]$.   


\end{lemma}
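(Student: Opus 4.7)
My plan is to derive all three identities from ``simplicial-style'' commutation relations among the operators $\sM_k^\infty$ and $\sI_k^\infty$, then to arrange signs as in the standard proof that a simplicial boundary squares to zero. Well-definedness of $\d_{\sM}^\infty$ and $\d_{\sI}^\infty$ on $\Z[\Theta]$ uses the closedness of $\Theta$: every $\sM_k^\infty((\om,\kappa))$ preserves the norm $|(\om,\kappa)|$ and is $\prec(\om,\kappa)$, so it lies in $\Theta$; every $\sI_k^\infty((\om,\kappa))$ lies in $\Theta$, except when $|(\om,\kappa)|+2>d$, in which case it leaves $\Om_{\langle d]}^\infty$ and the term is declared zero --- precisely the truncation in \ref{eq13.7}. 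Since both operators increase the reduced norm $|(\om,\kappa)|'$ by one (merges preserve $|(\om,\kappa)|$ but shrink $s_\om$ by one; inserts add two to $|(\om,\kappa)|$ and one to $s_\om$), $\d^\infty$ lowers the reduced-norm grading by one, establishing the graded-part claim.

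The combinatorial core is the verification, by direct positional inspection on $(\om,\kappa)$ with separate care for the boundary indices $k \in \{0, s_\om\}$ (where a part migrates to or from $\kappa$), of the relations
\begin{align*}
\sM_i^\infty\sM_j^\infty &= \sM_{j-1}^\infty\sM_i^\infty \quad \text{for } i<j,\\
\sI_i^\infty\sI_j^\infty &= \sI_{j+1}^\infty\sI_i^\infty \quad \text{for } i\le j,\\
\sM_i^\infty\sI_j^\infty &= \sI_{j-1}^\infty\sM_i^\infty \quad \text{for } i<j-1,\\
\sM_i^\infty\sI_j^\infty &= \sI_j^\infty\sM_{i-1}^\infty \quad \text{for } i>j,
\end{align*}
together with the crucial adjacent-index coincidence
\[
\sM_i^\infty\sI_i^\infty \;=\; \sM_i^\infty\sI_{i+1}^\infty,
\]
which reflects that the merge absorbs the inserted ``$2$'' into the same neighbor whether that $2$ sits to its left or to its right.

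With these identities in hand, $(\d_{\sM}^\infty)^2=0$ and $(\d_{\sI}^\infty)^2=0$ follow by the standard simplicial argument: expand the double sums, split as $\{i<j\} \sqcup \{i\ge j\}$, apply the corresponding relation to one half, relabel indices, and observe that the two halves cancel after a sign flip. For the anticommutation, expand
\[
\d_{\sI}^\infty\d_{\sM}^\infty + \d_{\sM}^\infty\d_{\sI}^\infty \;=\; -\sum_{i,j}(-1)^{i+j}\bigl(\sI_i^\infty\sM_j^\infty + \sM_i^\infty\sI_j^\infty\bigr);
\]
the non-adjacent mixed terms $\sM_i^\infty\sI_j^\infty$ pair with corresponding $\sI_{j'}^\infty\sM_{i'}^\infty$ terms across the two summands (via the mixed relations) and cancel after the appropriate index relabeling, while the two adjacent terms $\sM_i^\infty\sI_i^\infty$ and $\sM_i^\infty\sI_{i+1}^\infty$ appear within $\d_{\sM}^\infty\d_{\sI}^\infty$ alone with opposite signs and cancel directly through the coincidence --- no matching term from $\d_{\sI}^\infty\d_{\sM}^\infty$ is needed for them. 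The principal technical burden is the exhaustive case analysis of the commutation relations at the boundary indices $\{0, s_\om\}$ together with the adjacent-index coincidence, which has no classical simplicial analog; consistency of the truncation at $|(\om,\kappa)|=d$ with these relations is automatic, since every dropped term lies beyond $\Om_{\langle d]}^\infty$ and both sides of each identity vanish simultaneously in that regime.
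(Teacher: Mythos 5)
Your plan --- reading the vanishing identities off from face/degeneracy-type commutation relations for $\sM_k^\infty$, $\sI_k^\infty$, plus one extra ``adjacent coincidence'' which replaces the classical $d_j s_j = d_{j+1}s_j = \mathrm{id}$ --- is exactly what the paper's proof does, merely phrased more abstractly: the paper cancels the same pairs of terms by hand, and your identities are a tidy way to organize that bookkeeping, so the approach is the same in substance. The only thing I would caution you about is the indexing. The paper is internally inconsistent: the formal definition of $\sI_j$ in \ref{eq2.2a} places the inserted $2$ at position $j$ for $j=1,\dots,\ell+1$, whereas the sum $\d_{\sI}^\infty = \sum_{k=0}^{s_\om}(-1)^k\sI_k^\infty$ in \ref{eq13.7}, the worked example, and the paper's own cancellation computation all use the convention that $\sI_k$ places the $2$ immediately \emph{after} the $k$-th part, with $k=0,\dots,s_\om$. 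Your coincidence $\sM_i^\infty\sI_i^\infty = \sM_i^\infty\sI_{i+1}^\infty$ and your ranges $i<j-1$, $i>j$ are correct under the first convention but false under the second; under the second, which is the one actually governing $\d_{\sI}^\infty$, the coincidence reads $\sM_i^\infty\sI_{i-1}^\infty = \sM_i^\infty\sI_i^\infty$ and the mixed relations hold for $i<j$ and $i>j+1$. Since an overall shift of the insert index changes $\d_{\sI}^\infty$ only by a global sign (which is invisible to $(\d_{\sI}^\infty)^2$ and to the anticommutator), the structure of your proof survives the correction, but as written the stated identities do not match the formula \ref{eq13.7}, so you should verify them against the convention that appears in the definition of $\d_{\sI}^\infty$ rather than against \ref{eq2.2a}.
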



\begin{proof} 
 Let us first show that $(\d_{\sI}^\infty)^2 = (\d_{\sM}^\infty)^2 = \d_{\sI}^\infty \d_{\sM}^\infty + \d_{\sM}^\infty \d_{\sI}^\infty= 0$. 
   Consider a marked composition $(\omega,\kappa)$ with $\om = (\om_1,\ldots, \om_\ell)$. Then for $\ell \geq 3$,  the expression for $\d_{\sM}^\infty((\omega,\kappa))$ will, in particular,  include the terms of the form
   $$(-1)^k ((\om_1,\ldots,\om_{k-1},\om_k+\om_{k+1},\om_{k+2},\ldots, \om_\ell),\kappa)\; +$$ 
   $$(-1)^{k+1} ((\om_1,\ldots, \om_k, \om_{k+1}+\om_{k+2},\om_{k+3},\ldots,\om_\ell),\kappa).$$  
   Thus $(\d_{\sM}^\infty)^2((\omega,\kappa))$ will contain the vanishing sum   
   $$(-1)^{2k} ((\om_1,\ldots, \om_{k-1}, \om_k+\om_{k+1}+\om_{k+2},\om_{k+3},\ldots,\om_\ell),\kappa) +$$ $$ (-1)^{2k+1}  ((\om_1,\ldots, \om_{k-1}, \om_k+\om_{k+1}+\om_{k+2},\om_{k+3},\ldots, \om_\ell),\kappa)=0.$$
   For $\ell\geq 2$, the homomorphism $\d_{\sM}^\infty((\omega,\kappa))$  will also contain  the terms \hfill\break $((\om_2,\ldots, \om_\ell),\om_1+\kappa) - ((\om_1+\om_2,\om_3,\ldots, \om_{\ell}),\kappa)$,  which yields 
   $$((\om_1,\ldots, \om_\ell),\om_1+\om_2+\kappa) - ((\om_3,\ldots, \om_{\ell}),\om_1+\om_2+\kappa) = 0$$ in $(\d_{\sM}^\infty)^2((\omega,\kappa))$.
For $\ell=1$,   $\d^\infty_{\sM} ((\om_1),\kappa) = 0$ by definition. \smallskip

   To show that $(\d_{\sI}^\infty)^2 = 0$, write $$\d_{\sI}^\infty(\omega) =\; \dots + (-1)^k ((\om_1,\ldots, \om_{k}, 2, \om_{k+1},\ldots,\om_\ell),\kappa) + \dots\, .$$ 
   Thus $(\d_{\sI}^\infty)^2((\omega,\kappa))$ will only consist of the vanishing sums of the form  
   $$(-1)^{2k}(\om_1,\ldots, \om_k, 2, 2, \om_{k+1},\ldots, \om_\ell),\kappa) + (-1)^{k+1} (\om_1,\ldots, \om_k, 2, 2, \om_{k+1},\ldots, \om_\ell),\kappa) = 0.$$ 

   Finally, let us compute $(\d_{\sI}^\infty \d_{\sM}^\infty + \d_{\sM}^\infty \d_{\sI}^\infty)((\omega,\kappa))$. Observe that  $\d_{\sI}^\infty((\omega,\kappa))$ consists of the sums of the form  
   $$(-1)^k ((\om_1,\ldots, \om_k,2,\om_{k+1} ,\ldots, \om_{\ell}),\kappa)\; + \;$$ 
   $$(-1)^{k+1} ((\om_1,\ldots, \om_{k+1} , 2, \om_{k+2} , \ldots,\om_{\ell}),\kappa) +(-1)^{k+2} ((\om_1,\ldots,  \om_{k+2} ,2, \om_{k+3},\ldots, \om_{\ell}),\kappa).$$ 
   
   \smallskip
   Cancelling terms in $\d_{\sM}^\infty \d_{\sI}^\infty((\omega,\kappa))$, we are
   left with expressions $$(-1)^{2k+2}((\om_1,\ldots,\om_k, 2, \om_{k+1}+\om_{k+2}, \om_{k+3}, \ldots, \om_{\ell}),\kappa)) \; +$$ 
   $$(-1)^{2k+3} ((\om_1,\ldots, \om_k, \om_{k+1}+\om_{k+2} , 2, \om_{k+3},\ldots, \om_\ell),\kappa).$$  
   In a similar computation of $\d_{\sI}^\infty\d_{\sM}^\infty((\omega,\kappa))$, we will obtain the contributions $$(-1)^{2k+1} ((\om_1,\ldots, \om_k, 2, \om_{k+1}+\om_{k+2},\om_{k+3},\ldots, \om_\ell),\kappa) \;+$$ 
   $$(-1)^{2k+2} ((\om_1,\ldots, \om_k ,\om_{k+1}+\om_{k+2},2, \om_{k+3},\ldots, \om_\ell),\kappa).$$ 
  Adding them together, we get  that $(\d_{\sI}^\infty \d_{\sM}^\infty + \d_{\sM}^\infty \d_{\sI}^\infty)((\omega,\kappa)) = 0$.
  
  \smallskip
   The claim that $\Z[\Theta]$ is a graded differential complex now follows from the fact that $\d^\infty$ is a differential and that 
   $\d_{\sM}^\infty$ and $\d_{\sI}^\infty$ respect the grading.
\end{proof}

\begin{proposition}\label{prop.3.2}
  For any closed subposet $\Theta\in \Om_{\langle d]}^\infty$, the graded differential complex $(\Z[\Theta], \d^\infty)$ coincides with the cellular chain complex of
  $\cB_d^\Theta$. 
  In particular, 
   $(\Z[\Theta], \d^\infty)$ calculates the homology of 
 $\cB_d^{\Theta}$.
\end{proposition}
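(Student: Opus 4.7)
The plan is to identify $(\Z[\Theta],\d^\infty)$ with the cellular chain complex of the CW-complex $\cB_d^\Theta$ furnished by \ref{cor:cw-structurebd}. For the identification as graded abelian groups, combine \ref{cor:cw-structurebd} with \ref{cor:bd}: the open cells of $\cB_d^\Theta$ are the $\mathring{\sR}_d^{(\om,\kappa)}$ for $(\om,\kappa) \in \Theta$, and $\dim \mathring{\sR}_d^{(\om,\kappa)} = d-|(\om,\kappa)|'$. This matches the grading of $\Z[\Theta]$ described in \ref{lem13.8}, under which the $j$-th graded piece is $\Z[\Theta_{|\sim|'=d-j}]$.

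The remaining content is that $\d^\infty$ equals the cellular boundary map. The support is already correct by \ref{cor:bd}: the codimension-one cells in the closure of $\mathring{\sR}_d^{(\om,\kappa)}$ are exactly those indexed by $\sM_k^\infty((\om,\kappa))$ for $0 \leq k \leq \ell$ and by $\sI_k^\infty((\om,\kappa))$ for $0 \leq k \leq \ell$, provided the resulting marked composition is non-empty. When $|(\om,\kappa)| = d$, every insert produces a composition of norm $d+2$, whose cell is empty by \ref{cor:bd}; this accounts for the dichotomy in \eqref{eq13.7}.

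The heart of the argument is then the sign computation. I would orient each cell using the product decomposition $\mathring{\sR}_d^{(\om,\kappa)} \cong \Pi_\ell \times \Sym^m \mathbb{H}$ from \ref{cor:bd}, where $m=(d-|(\om,\kappa)|)/2$: the standard orientation $dx_1 \wedge \cdots \wedge dx_\ell$ on $\Pi_\ell = \{x_1 < \cdots < x_\ell\}$ combined with the complex orientation on $\Sym^m\mathbb{H} \subset \Sym^m\mathbb{C} \cong \mathbb{C}^m$ (via elementary symmetric polynomials). Local computations at each codimension-one stratum then yield the required incidence numbers: for a finite merge $\sM_k^\infty$, $1 \leq k \leq \ell-1$, the face $\{x_k = x_{k+1}\} \cap \Pi_\ell$ contributes the standard simplex-boundary sign $(-1)^{k+1}$, matching the coefficient of $\sM_k^\infty((\om,\kappa))$ in $\d^\infty((\om,\kappa))$; for the two infinity merges $\sM_0^\infty$ and $\sM_\ell^\infty$, I would switch to projective charts near the point at infinity of $\R\P^1$ to compare orientations on the source and target cells and verify the signs $-1$ and $(-1)^{\ell+1}$; for an insert $\sI_k^\infty$, I would track how a complex-conjugate pair from the $\mathbb{H}$-factor colliding on the real axis between $x_k$ and $x_{k+1}$ transfers its complex orientation onto the new real double root in the $\Pi_{\ell+1}$-factor of the face.

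The main obstacle will be the insert signs, since they require moving a complex-analytic orientation across a real-codimension-two stratum where a conjugate pair degenerates to a real double root. My strategy to handle this cleanly is to work directly in $\Sym^m \mathbb{C} \cong \mathbb{C}^m$ via elementary symmetric polynomials, describe the real locus $\Sym^m\mathbb{R} \subset \Sym^m\mathbb{C}$ as a totally real subvariety, and compute the coorientation of $\Pi_{\ell+1}$-type strata inside it using the Cauchy--Riemann structure. Once all incidence numbers are verified, $(\Z[\Theta], \d^\infty)$ coincides term-by-term with the cellular chain complex; the fact that $\d^\infty$ is a differential was already established in \ref{lem13.8}, and the final claim that $(\Z[\Theta], \d^\infty)$ computes $H_\ast(\cB_d^\Theta)$ is immediate from the definition of cellular homology.
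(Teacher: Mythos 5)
Your proposal mirrors the paper's proof of \ref{prop.3.2} step for step: identify the chain groups via \ref{cor:cw-structurebd} and \ref{cor:bd}, read off the support of the boundary from \ref{cor:bd} (with the same explanation of the $|\om|=d$ dichotomy in \eqref{eq13.7}), orient cells via the product $\Pi_\ell \times \Sym^m\mathbb{H}$ with the complex orientation on the symmetric-product factor, compute merge signs as simplex-boundary signs with projective charts handling $\sM_0^\infty$ and $\sM_\ell^\infty$, and compute insert signs by tracking a conjugate pair degenerating to a real double root. The only thing you gloss over that the paper treats carefully is showing the incidence numbers are exactly $\pm 1$ (the paper's ``small homotopy'' argument for merges and the ``multiple sheets'' discussion for inserts); if you carry out your outlined local computations rigorously, you will need that argument, and it will coincide with the paper's.
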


\begin{proof} 
  By \ref{cor:bd}, the \emph{topological} boundary $\d\sR^{(\omega,\kappa)}_d$ of $\sR^{(\omega,\kappa)}_d$ 
  coincides with 

\begin{eqnarray}\label{eq:boundary}
        \Big(\,\bigcup_{k = 0}^{s_\omega} \sR^{\sM_k^ \infty((\omega,\kappa))}_d\, \Big)\; \bigcup \; \Big(\,\bigcup_{k = 0}^{s_\omega} \sR^{\sI_k^ \infty((\omega,\kappa))}_d\,\Big).
  \end{eqnarray}

  Therefore, the algebraic boundary of $\sR^{(\omega,\kappa)}_d$ in the
  cellular chain complex of $\cB_d^{\Theta}$
  is given by a sum of the form 
  \begin{eqnarray}\label{eq13.7a}
    \sum_{k = 0}^{s_\omega}  a_k\, \sR^{\sM_k^\infty((\omega,\kappa))}_d 
     + \sum_{k = 0}^{s_\omega}  b_k\, \sR^{\sI_k^\infty((\omega,\kappa))}_d, 
  \end{eqnarray}
  where $a_k$ and $b_k$ are some integer coefficients. 
\smallskip

  In order to determine the coefficients $a_k$ and $b_k$, we proceed as follows.
  By \ref{eq:boundary}, if a path $\gamma= \{P_t\}_{t \in [0, 1]}$ is such that $\gamma \setminus \gamma(1) \subset \mathring{\sR}^{(\om,\kappa)}_d$ and $\gamma(1)$ belongs to the boundary of $\sR^{(\om,\kappa)}_d$, 
	then at least either two consecutive real roots of the  polynomials $P_t$ approach each other, or 
  two complex-conjugate roots approach each other at a point of $\R$, as $t \to 1$. These
  situations are encoded in the merge and insert operations, respectively.
  Thus in order to calculate $a_k$ and $b_k$, we need to understand what happens along paths in $\mathring{\sR}^{(\omega,\kappa)}_d$ that approach the boundary cells
  $\mathring{\sR}^{\sM_k^\infty((\omega,\kappa))}_d$ and  
  $\mathring{\sR}^{\sI_k^\infty((\omega,\kappa))}_d$, respectively. Note that we can face a situation when, for $k \neq k'$, 
  $\sM_k^\infty((\omega,\kappa)) = \sM_{k'}^\infty((\omega,\kappa))$ or 
  $\sI_k^ \infty((\omega,\kappa)) =\sI_{k'}^\infty((\omega,\kappa))$. 
  Nevertheless, we will show that each operation $\sM_k$ or $\sI_k$ 
  corresponds to a unique local homotopy type of paths $\gamma$ in the vicinity of $\gamma(1)$ and hence yields a path--independent contributions $a_k$ and $b_k$, respectively. 
 

  The preferred orientation of $\sR^{(\omega,\kappa)}_d$ is 
  induced by the canonical orientation of the open cell 
  $\mathring{\sR}^{(\omega,\kappa)}_d$. Recall that we can identify 
  a class of binary forms from $\mathring{\sR}^{(\omega,\kappa)}_d$ with a univariate polynomial of degree $d-\kappa$. 
  By \ref{cor:bd}, we know that 
  $\mathring{\sR}^{(\omega,\kappa)}_d \cong \Pi_{s_\omega} \times 
  \Sym^{m_\omega}(\mathbb H)$ for $\om = (\om_1,\ldots, \om_{s_{\omega}})$ and 
  $m_\om :=  \frac{d-|(\om,\kappa)|}{2}$.

 %

  The orientation of the open cell 
  $\Sym^{m_\omega}(\mathbb H)$ is canonically induced by its complex structure, 
  while the orientation of $\Pi_{s_\omega}$ is induced from 
  $\R^{s_\omega}$.
  In other words, the orientation of 
  $\mathring{\sR}^{(\om,\kappa)}_d \cong \mathring{\sR}^\omega_{d- \kappa}$ is  given by the volume form 
  $$\rho_\omega :=(dx_1 \wedge  \dots \wedge dx_{s_\omega}) \wedge 
  \Big(\frac{i}{2}\Big)^{m_\omega}(dz_1 \wedge d\bar{z}_1\, \wedge \dots \,
  \wedge dz_{m_\omega} \wedge d\bar{z}_{m_\omega}),$$ considered on the 
  product $\Pi_{s_\omega} \times \Sym^{m_\omega}(\mathbb H)$. 

  \smallskip

  \noindent {\sf Claim 1:} In the formula \ref{eq13.7a} for the boundary operator 
   we have $a_k = (-1)^{k+1}$. 
\smallskip

  Note that, if for some $k$ and $l$, we have 
  $\sM_k^\infty((\omega,\kappa)) = \sM_l^\infty((\omega,\kappa))$, then 
  either $k = l$ or $\{k,l\} = \{0,s_\om\}$ and all the entries of $\om$ are
  \emph{identical}. In the latter case, $\sM_l^\infty((\omega,\kappa)) = \sM_0^\infty((\omega,\kappa))$ and the corresponding cell $\mathring{\sR}^{(\omega,\kappa)}_d$ is adjacent to the cell $\mathring{\sR}^{\sM_0^\infty((\omega,\kappa))}_d$ ``on both sides.''

  \smallskip

  \noindent {\sf Case:} $k \neq s_\omega, 0$ 

  In this situation $\sM_k^\infty((\om,\kappa)) \neq \sM_l^\infty((\om,\kappa))$ for $l \neq k$.
  Let $P \in \mathring{\sR}^{\sM_k^\infty((\omega,\kappa))}_d$ be a  polynomial of degree $d-\kappa$. Consider a one-parameter family $\{P_t \subset \mathring{\sR}^{(\omega,\kappa)}_d\}_{t \geq 0}$ 
  such that $\lim_{t \to 0} P_t = P$. Then, for all sufficiently small $t > 0$, the $k$\textsuperscript{th} and $(k+1)$\textsuperscript{st} largest real roots
  $x_k(P_t)$ and $x_{k +1}(P_t)$ of $P_t$ converge to the $k$\textsuperscript{th} largest real root $x_k(P)$ of $P$. Moreover, for any two paths $P_t, Q_t \subset 
  \mathring{\sR}^{(\omega,\kappa)}_d$ such that  $\lim_{t \to 0} P_t  = \lim_{t \to 0} Q_t = P$, their germs at $P$ can be deformed into one another 
  by a \emph{small} homotopy in $\mathring{\sR}^{(\om,\kappa)}$. Let us explain this claim.

We say that $\e > 0$ is \emph {small enough for $P$} if the $\e$-disks around distinct roots of $P$ are disjoint in $\C$ and the $\e$-disks around the non-real roots do not intersect the real line.  Let $U_\e(P)$ denote the union of such $\e$-disks. 
  
 For any  $\e >0$ which is small enough for $P$,  there exists  $t_\e>0$ such that for $0< t < t_\e$,  each root of 
 $P_t$ and of $Q_t$ resides in $U_\e(P)$. This implies that, with the exception of the two merging roots $x_k(P_t)$ and  
 $x_{k +1}(P_t)$, which share the same $\e$-ball centered at the $k$\textsuperscript{th} root of $P$, any other real 
 root of $P_t$ resides in a single $\e$-ball contained in $U_\e$. The same conclusion is valid for $Q_t$, $t < t_\e$. The  non-real roots of $P_t$ and $Q_t$ require more careful treatment, since more than two non-real roots of $P_t$ or $Q_t$ may belong the the same $\e$-disk $D_\e(z)$ centered on a non-real root $z$ of multiplicity $m(z) \geq 2$ of the limiting polynomial $P$. As a result, there is no natural correspondence between the roots of $P_t$ and $Q_t$ that reside in $D_\e(z)$.
 
Now,  we can deform $P_t$ into $Q_t$ in $\mathring{\sR}^{(\omega,\kappa)}_d$ so that, in the process of deformation, the roots of the intermediate polynomials do not exit $U_\e(P)$. This may by accomplished by the linear homotopy of the corresponding real roots of $P_t$ and $Q_t$, being combined with the following deformation of the non-real roots of $P_t$ and $Q_t$ that reside in each disk $D_\e(z)$. We first contract radially all the roots of $P_t$ in $D_\e(z)$ to $z$, counted with the multiplicity $m(z)$, and then expand $m(z) z$ radially into the divisor of $Q_t$ that is supported in $D_\e(z)$. Thus the desired deformation of $P_t$ into $Q_t$ takes place not only in the open cell $\mathring{\sR}^{(\omega,\kappa)}_d$, but also in the vicinity of $P$. 
  Therefore, the germ of $\mathring{\sR}^{(\omega,\kappa)}_d$ at $P \in \mathring{\sR}^{\sM_k^\infty((\omega,\kappa))}_d$ has a single connected component contained in $\mathring{\sR}^{(\omega,\kappa)}_d$.  As a result, the incidence index $a_k$ of the cell $\mathring{\sR}^{(\omega,\kappa)}_d$ with 
  the cell $\mathring{\sR}^{\sM_k^\infty(\omega,\kappa)}_d$ equals $\pm 1$.
\medskip

In order to determine the sign, it suffices to consider a family of polynomials $P_t \subset \mathring{\sR}^\omega_d$
  such that:
  \begin{itemize}
    \item $\lim_{t \to 0} P(t) = P \in \mathring{\sR}^{\sM_k^\infty((\omega,\kappa))}_d$,
    \item the polynomials $P(t)$ share the roots with $P$, except for the roots $x_k(P_t), x_{k+1}(P_t)$ and $x_k(P)$,
    \item $x_k(P_t) = x_k(P)  - t$, $x_{k+1}(P_t) = x_k(P) + t$.
  \end{itemize}

  Since the complex roots of $P_t$ and $P$ coincide, we can restrict our attention to
  $\Pi_{\omega}$.
  The tangent vector to the path $P(t)$ at $t = 0$ is given by
  $w = (0, \dots, 0, -1, 1, 0, \dots)$. This vector is the inward normal to the face
  $\{x_k = x_{k+1}\}$ of the polyhedron $\Pi_{\omega} \subset \R^{s_\omega}$.

  The inner product ``$\rfloor$''
  of $w$ with the volume form in $\R^{s_\omega}$ is given by
  \[
    w\, \rfloor (dx_1 \wedge  \dots \wedge dx_k \wedge dx_{k+1}\wedge \dots \wedge dx_{s_\omega})
   = (-1)^{k + 1}(dx_1 \wedge  \dots \wedge dx_k \wedge dx_{k+2}\wedge \dots \wedge dx_{s_\omega}).
  \]

 Therefore, the orientation of the boundary $\d\Pi_{\omega}$, induced by the orientation of its interior $\Pi_{\omega}^\circ$, differs from the preferred orientation of
  $\Pi_{\sM_k^\infty(\omega)}^\circ$ exactly by the factor $(-1)^{k +1}$.
\smallskip

  \noindent {\sf Case:} $k = s_\omega := |\om|- |\om|'$ 

 If a real polynomial $P$ of degree $d -\kappa$ has its largest real root $x_k(P)$ of multiplicity $\om_k$, then  $P$ produces, with the help 
 of the map $\wp$ from \ref{eq.polys-forms}, a bilinear form $\wp(P)$ of degree $d$ in the homogeneous variables $[z:w]$. In fact, $\wp(P) = (z- x_\ell \cdot w)^{\om_k}\,w^\kappa \times Q(z, w)$, 
 where $Q(z, w)$ is a real binary form of degree $d-\om_k -\kappa$ such that $Q(z, 1) = P(z)\cdot (z- x_k)^{-\om_k}$.

\smallskip
 Let us examine the result of the merge operation $\sM_\ell^\infty((\om,\kappa))$ on the real zero divisors of the binary form $\wp(P)$.
\smallskip
                                                                                                                                                                                                            
 First  we choose an open interval $(\a, \b) \subset \R$ that contains all the real roots $x_1(P), \ldots,$ $ x_k(P)$ of $P$. Then we choose an affine chart  
 $U_\a := \R\P^1 \setminus [\a : 1]$  and a projective transformation $\mathcal A \in \mathsf{PGL}(2, \R)$ which maps  the ordered sequence $[x_1:1], \ldots , [x_k: 1]$ in the chart 
 $U_\infty:= \{[z: 1]\}$ to an ordered sequence $\mathcal A([x_1:1]), \ldots , \mathcal A([x_k: 1])$ in the chart $U_\a$. In particular,  $\mathcal A$ maps  $\infty = [1:0]$ to a point $[a : 1]$, where $a > \mathcal A([\b, 1])$. 
 In the original affine chart $U_\infty$, the projective transformation $\mathcal A$ is given by the rational function $F(z) = a - \frac{b}{z- \a}$, where $F(\b)  < a$. Now, as $x_k \to \infty$, $F(x_k) \to a$.                                                                                                                                                                  
 The real zero locus $\om_1\cdot x_1 +\ldots + \om_k \cdot x_k + \kappa \cdot\infty$ of the binary form $\wp(P)$ is mapped by $\mathcal A$ to the real zero locus  
 $\om_1 \cdot \mathcal A(x_1) +\ldots + \om_k \cdot \mathcal A(x_k) + \kappa \cdot a$ of the bilinear form  $\mathcal A^\ast (\wp(P))$. Under the transformation $\mathcal A$, 
 the combinatorial pattern $(\om, \kappa)$ of $\wp(P)$ is transformed into the combinatorial pattern $((\om_1, \ldots, \om_k, \kappa), 0)$ of 
 $\mathcal A^\ast (\wp(P))$ in $U_\a$. Now set $\om^\kappa := (\om_1, \ldots, \om_k, \kappa)$. Notice that under $\mathcal A^\ast$, the merge operation 
 $\sM_k^\infty((\omega,\kappa))$ is transformed into the operation $\sM_k(\omega^\kappa) = \sM_k^\infty(\omega^\kappa, 0)$. The latter case does not involve $\infty$ and  has been analyzed above. 
\smallskip                                                                                                                                                                                                                 


  \smallskip

  \noindent {\sf Case:} $k= 0$ 
  
  This case can be treated analogously to the case $k = s_\omega$.

  \medskip

  \noindent {\sf Claim 2:} In the formula \ref{eq13.7a} for the boundary operator 
  we have $b_k = (-1)^{k}$. 

  \smallskip

  This is a more delicate situation since multiple sheets 
  of $\mathring{\sR}^{(\omega,\kappa)}_d$ may appear in the neighborhood of $P \in \mathring{\sR}^{\sI_k(\omega)}_d$.  
  The simplest example of such phenomenon occurs when $P$ has two consecutive real roots, $x_i$ and $x_{i+1}$, each of  multiplicity $2$. 
  Denote by $P_1$ and $P_2$ two polynomials obtained by resolving the first and, respectively, the second double root into a pair of simple complex-conjugate roots. 
  Then the combinatorial patterns of the real root multiplicities of $P_1$ and  $P_2$ are identical. 
 Note that, although $P_1$ and $P_2$ are close to each other in $\sR^{(\omega,\kappa)}_d$, there is no short path in 
  $\mathring{\sR}^{(\omega,\kappa)}_d$  connecting them in a neighborhood of $P$.

  Let $x_1(P) < \cdots < x_{s_\omega +1}(P)$ be the distinct real roots of 
  $P  \in \mathring{\sR}^{\mathsf I_k^\infty((\omega,\kappa))}_d$, and let 
  $\{(z_l^\star, \bar{z}_l^\star)\}_l$ be the unordered collection of non-real 
  roots of $P$. By our assumptions on $P$, we have that $x_{k+1}(P)$ is a root of multiplicity $2$. 
  Let $P_t \subset  \mathring{\sR}^{(\omega,\kappa)}_d$ be a path in 
  $\mathring{\sR}^{(\omega,\kappa)}_d$ such that $\lim_{t \rightarrow 0} P_t = P$. 
  We may assume that 
\begin{itemize}
  \item $x_j(P_t) = x_j(P)$ for all $j \leq k$ and 
	$x_j(P_t) = x_{j + 1}(P)$ for all $j > k+1$;
  \item 
        $(x_{k+1}(P) + \mathsf i\, t,  
	x_{k+1}(P) - \mathsf i\,t)$ is a pair of simple complex-conjugate roots for 
        $P_t$ and all the other non-real roots of $P_t$ and $P$ coincide. 
\end{itemize}

With this convention and choosing a fixed ordering of the complex roots with positive imaginary parts, we may consider $P_t$ as a  curve in the
the ordered ``root space'' $\R^{s_\omega} \times \C^{m_\omega}$. Here
the vector $w = (0, \dots , 0, \mathsf i, 0, \dots,  0)$ is   tangent to the curve 
$P_t$ at $P$. 

\smallskip
The volume form $\rho_\omega$ can be written as 
$\rho_\omega^{\R} \wedge  \rho_\omega^{\C}$, where 
$$\rho_\omega^{\R} := \hfill\break dx_1 \wedge \dots \wedge dx_{s_\omega}$$ and $$\rho_\omega^{\C} := \left(\frac{\mathsf i}{2}\right)^{m_\omega}(dz_1 \wedge d\bar{z}_1) \wedge \dots \wedge (dz_{m_\omega} \wedge d\bar{z}_{m_\omega}).$$ 

Since $\mathsf i\, \rfloor \big(\frac{\mathsf i}{2} dz \wedge d\bar{z}\big) = (0,1)\, \rfloor (dx \wedge dy) = -dx$, we get \hfill\break
\smallskip
\[
w\,  \rfloor (\rho_\omega^{\R} \wedge \rho_\omega^{\C}) = (-1)^{s_\omega}\, \rho_\omega^{\R} \wedge (w\, \rfloor  \rho_\omega^{\C}) = (-1)^{s_\omega}\, \rho_\omega^{\R} \wedge dx_{k+1} \wedge \rho_{\sI_k(\omega)}^{\C}
\]
\[ = (-1)^{2s_\omega + k}\, \rho_{\sI_k(\omega)}^{\R} \wedge \rho_{\sI_k(\omega)}^{\C} = (-1)^k \,\rho_{\sI_k(\omega)}.
\]


This calculation implies that the part $\mathring{\sR}^{\sI_k^\infty(\omega)}_d$ of the boundary 
$\delta^\infty(\sR_\omega)$, being approached via the path $P_{t, k} := P_t \subset 
\sR^\omega_d$ as above, acquires an orientation that differs from its 
$\rho_{\sI_k(\omega)}$-induced orientation by the factor $b_k = (-1)^k$. 


\medskip

Thus we have shown that, the $(E^2, d^2)$-term of the $\tilde{H}_\ast$-homological  
spectral sequence, associated to the filtration 
of the space $\cB_d^\Theta$ by its skeleta, coincides with the 
graded differential complex complex $\d^\infty:  \Z[\Theta] \to \Z[\Theta]$,  
defined by the formula \ref{eq13.7}.
\end{proof}


As an immediate consequence of \ref{prop.3.2} and of the previous discussions, we obtain one of our main results --- the combinatorial differential complex that calculates the homology of $\cB_d^{\Theta}$.

\begin{theorem}\label{th.3.3} 
  Let $\Theta \subset \Om_{\langle d]}^\infty$ be a closed subposet. 
  For all $j \geq 0$, the homology of $\cB_d^{\Theta}$ is given by
 \begin{eqnarray}
     H_{j}(\cB_d^{\Theta}; \Z) \cong \; 
     \frac{\ker\big\{\d^\infty:  
     \Z[\Theta_{|\sim|'=d-j}] \rightarrow 
     \Z[\Theta_{|\sim|'=d - j + 1}]\big\}}
     {\im\big\{\d^\infty:  
     \Z[\Theta_{|\sim|'=d - j - 1}] \rightarrow  
	\Z[\Theta_{|\sim|'=d - j}]\big\}},
 \end{eqnarray}
 where the differential $\d^\infty$ is given by formula \ref{eq13.7}. \hfill $\diamondsuit$
\end{theorem}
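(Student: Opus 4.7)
The plan is to deduce \ref{th.3.3} as an essentially immediate corollary of \ref{prop.3.2}, so the proof should be very short.

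First, I would pin down the grading. By \ref{cor:bd}, the open cell $\mathring{\sR}_d^{(\om,\kappa)}$ has real dimension $d - |(\om,\kappa)|'$, so the group of cellular $j$-chains of the CW-complex $\cB_d^{\Theta}$ is precisely $\Z[\Theta_{|\sim|'=d-j}]$. Hence as graded abelian groups, $\bigoplus_j \Z[\Theta_{|\sim|'=d-j}]$ is the cellular chain complex of $\cB_d^{\Theta}$ in homological degree.

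Second, I would check that $\d^\infty$ is the cellular boundary in the correct homological degree. A merge $\sM_k^\infty$ preserves the norm $|(\om,\kappa)|$ while decreasing the number of parts by one, hence increases $|(\om,\kappa)|'$ by one; an insertion $\sI_k^\infty$ increases the norm by two and the number of parts by one, so it also increases $|(\om,\kappa)|'$ by one. Thus $\d^\infty$ sends $\Z[\Theta_{|\sim|'=d-j}]$ into $\Z[\Theta_{|\sim|'=d-j+1}]$, which is the $(j-1)$st chain group — the shift required of a cellular boundary. Combined with \ref{prop.3.2}, this identifies $\d^\infty$ with the cellular boundary operator.

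Third, I would invoke the standard fact that the homology of the cellular chain complex computes $H_\ast(\cB_d^{\Theta};\Z)$, which gives the formula displayed in the theorem.

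There is no genuine obstacle, since all substantive work — identifying the topological boundary of each cell via merges and inserts, and pinning down the incidence signs $a_k=(-1)^{k+1}$ and $b_k=(-1)^k$ through explicit orientation forms and (for $k=0,s_\omega$) projective transformations sending $\infty$ into an affine chart — has already been performed in \ref{prop.3.2}. The only sanity check left is that the case split in \ref{eq13.7} between $|\omega|<d$ and $|\omega|=d$ is consistent: when $|\omega|=d$ the polynomial has no non-real roots, so any $\sI_k^\infty((\omega,\kappa))$ has norm $d+2$ and therefore cannot lie in $\Om_{\langle d]}^\infty$; dropping the $\d_{\sI}^\infty$ term in this case is forced and matches the cellular boundary.
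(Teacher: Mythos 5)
Your proposal is correct and takes essentially the same route as the paper, which presents \ref{th.3.3} as an immediate consequence of \ref{prop.3.2}; your only addition is spelling out the degree bookkeeping (that both $\sM_k^\infty$ and $\sI_k^\infty$ raise $|\cdot|'$ by one, so $\d^\infty$ shifts the cellular grading down by one), which is exactly the ``previous discussions'' the paper appeals to.
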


For a later use, let us also consider  a relative version of this differential complex. Namely, given  a closed subposet $\Theta \subseteq \Om_{\langle d]}^\infty$,  
consider the exact sequence of differential complexes
\begin{eqnarray}\label{eq.short_exact}
0 \to (\Z[\Theta], \d^\infty) \to (\Z[\Om^\infty_{\langle d]}], \d^\infty) \to (\Z[\Om_{\langle d]} \setminus \Theta], \d^\#) \to 0,
\end{eqnarray}
where the first homomorphism is the obvious inclusion and the last term in the exact sequence is, by definition, the  differential quotient complex. Its differential $\d^\#$ is still given by formulas 
\ref{eq13.7} in which all terms $\sM_k^\infty((\om,\kappa))$ (resp. $\sI_k^\infty((\om,\kappa))$) with $\sM_k^\infty((\om,\kappa)) \in \Theta$ (resp. $\sI_k^\infty((\om,\kappa)) \in \Theta$) are replaced by $0$.
\smallskip

Since all $\Z$-modules in \ref{eq.short_exact} are free, we get a short exact sequence of dual complexes:
\begin{eqnarray}\label{eq.dual_short_exact}
0 \to (\Z[\Om_{\langle d]} \setminus \Theta]^\ast, (\d^\#)^\ast) \to (\Z[\Om^\infty_{\langle d]}]^\ast, (\d^\infty)^\ast) \to (\Z[\Theta]^\ast, (\d^\infty)^\ast) \to 0,
\end{eqnarray}
where $\Z[\sim]^\ast$ denotes $\mathsf{Hom}_\Z(\Z[\sim], \Z)$, and $(\d^\#)^\ast, (\d^\infty)^\ast$ are the dual differentials. 

\smallskip
For any closed subposet $\Theta \subseteq \Om_{\langle d]}^\infty$, the following claim is an immediate consequence of \ref{th.3.3} and of the fact that $\cB_d^\Theta$ is a subcomplex of the CW-complex $\cB_d$.

\begin{corollary}\label{lem.homology_of_quotient} 
  Let $\Theta \subset \Om_{\langle d]}^\infty$ be a closed subposet. 
  Then for all $j \geq 0$,  the reduced 
  homology of the quotient $\cB_d / \cB_d^{\Theta}$ is given by
 \begin{eqnarray}
     H_{j}(\cB_d / \cB_d^{\Theta}; \Z) \cong \; 
     \frac{\ker\big\{\d^\#:  
     \Z[(\Om_{\langle d]} \setminus \Theta)_{|\sim|'=d-j}] \longrightarrow 
     \Z[(\Om_{\langle d]} \setminus \Theta)_{|\sim|'=d - j + 1}]\big\}}
     {\im\big\{\d^\#:  
     \Z[(\Om_{\langle d]} \setminus \Theta)_{|\sim|'=d - j - 1}] \longrightarrow  
	\Z[(\Om_{\langle d ]}\setminus \Theta)_{|\sim|'=d - j}]\big\}}. 
 \end{eqnarray}

 The reduced cohomology of the quotient space $\cB_d\big/ \cB_{d}^{\Theta}$ 
 is isomorphic to the homology of the dual 
 differential complex $(\Z[\Om_{\langle d]} \setminus \Theta]^\ast, (\d^\#)^\ast)$ introduced in \ref{eq.dual_short_exact}. \hfill $\diamondsuit$
\end{corollary}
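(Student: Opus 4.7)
The plan is to identify the quotient complex $(\Z[\Om_{\langle d]}^\infty \setminus \Theta], \d^\#)$ from \ref{eq.short_exact} with the cellular chain complex of the quotient CW-complex $\cB_d/\cB_d^\Theta$, and then to invoke \ref{th.3.3} together with the standard fact that for a good CW-pair the relative homology coincides with the reduced homology of the quotient.

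First I would apply \ref{th.3.3} to the full poset $\Om_{\langle d]}^\infty$ (which is trivially closed) and to the closed subposet $\Theta$. This identifies $(\Z[\Om_{\langle d]}^\infty], \d^\infty)$ with the cellular chain complex $C_\ast(\cB_d)$ and $(\Z[\Theta], \d^\infty)$ with $C_\ast(\cB_d^\Theta)$. By \ref{cor:cw-structurebd}, the closedness of $\Theta$ guarantees that $\cB_d^\Theta$ is a CW-subcomplex of $\cB_d$, so the injection in \ref{eq.short_exact} is the inclusion of cellular chain complexes. The quotient term in \ref{eq.short_exact} is then, by construction, the relative cellular chain complex $C_\ast(\cB_d,\cB_d^\Theta)$. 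The prescription for $\d^\#$ in \ref{eq.short_exact} is exactly the projection of $\d^\infty$ onto cells outside $\Theta$: for $(\om,\kappa)\notin\Theta$ we discard those summands $\sM_k^\infty((\om,\kappa))$ or $\sI_k^\infty((\om,\kappa))$ that lie in $\Theta$. The quotient differential is well-defined precisely because $\Theta$ is downward-closed under $\prec$, so the cellular boundary of a cell in $\Theta$ stays in $\Theta$.

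Second, since $\cB_d^\Theta$ is a CW-subcomplex of $\cB_d$, the pair $(\cB_d,\cB_d^\Theta)$ is good and the quotient map induces
\[
H_\ast(\cB_d,\cB_d^\Theta;\Z)\;\xrightarrow{\;\cong\;}\;\tilde H_\ast(\cB_d/\cB_d^\Theta;\Z).
\]
Combining this with the identification above yields the stated homology formula.

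For the cohomology statement, note that each graded piece of $(\Z[\Om_{\langle d]}^\infty\setminus\Theta],\d^\#)$ is a free abelian group of finite rank, so the cellular cochain complex of the pair $(\cB_d,\cB_d^\Theta)$ is the $\Z$-linear dual of its cellular chain complex, i.e. $(\Z[\Om_{\langle d]}^\infty\setminus\Theta]^\ast,(\d^\#)^\ast)$ as in \ref{eq.dual_short_exact}. The reduced cohomology of $\cB_d/\cB_d^\Theta$ is therefore computed by this dual complex. The only genuinely verification-demanding step is the identification of the quotient differential complex with the relative cellular chain complex of $(\cB_d,\cB_d^\Theta)$; this is essentially formal but relies crucially on the closedness of $\Theta$, which is the main structural hypothesis. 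I expect no further obstacle, as the sign bookkeeping has already been carried out in the proof of \ref{prop.3.2}.
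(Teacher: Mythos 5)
Your proof is correct and follows essentially the same route the paper has in mind: the paper simply states that the corollary is an immediate consequence of \ref{th.3.3} and the fact that $\cB_d^\Theta$ is a CW-subcomplex of $\cB_d$, and you have explicitly spelled out the routine steps (identifying the quotient complex in \ref{eq.short_exact} with $C_\ast(\cB_d,\cB_d^\Theta)$, invoking the good-pair isomorphism $H_\ast(\cB_d,\cB_d^\Theta)\cong\tilde H_\ast(\cB_d/\cB_d^\Theta)$, and using freeness of the chain groups for the cohomological dual statement).
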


%
 
Now let us consider of the one-point compactification $\bar{\cP}_d^\Theta$ for a closed subposet $\Theta \subseteq \Om_{\langle d]}$. 
Analogously to the case of a closed subposet $\Theta \subseteq \Om_{\langle d]}^\infty$, we introduce two homomorphisms on $\Z[\Theta]$ by the formula

$$\d_{\sM} (\omega) := - \sum_{k=1}^{s_\omega-1} (-1)^k \sM_k(\omega) \, \text{  and   }\, \d_{\sI} (\omega) :=  \sum_{k=0}^{s_\omega} (-1)^k \sI_k(\omega).$$


Next, we define a homomorphism $$\d  = \d_{\sM} +\d_{\sI} : \Z[\Theta] \to \Z[\Theta]$$ by 
\begin{eqnarray}\label{eq13.7uni}
\quad \quad\d(\omega) := \left\{
\begin{array}{ll}
-\sum_{k = 1}^{s_\omega-1}(-1)^k\, \sM_k(\omega) + \sum_{k = 0}^{s_\omega}(-1)^k\, \sI_k(\omega),\;\; \text{for}\; \omega \neq \infty, |\omega| < d,   \\
 \\
-\sum_{k = 1}^{s_\omega-1}(-1)^k\, \sM_k(\omega),\;\; \text{for}\; \om \neq\infty, (d), |\omega| = d, \\
 \\
0 ,\;\; \text{for}\; \omega = (d).
\end{array}\right.
\end{eqnarray}

\begin{corollary}\label{cor13.4} 
  Let $\Theta \subset \Om_{\langle d]}$ be a closed subposet. 
  The for all $j \geq 0$, the reduced homology of the one-point compactification $\bar{\cP}_d^{\Theta}$ of $\cP_d^{\Theta}$ is given by
 \begin{eqnarray}
     \tilde{H}_{j}(\bar{\cP}_d^{\Theta}; \Z) \cong \; 
     \frac{\ker\big\{\d:  
     \Z[\Theta_{|\sim|'=d-j}] \rightarrow 
     \Z[\Theta_{|\sim|'=d - j + 1}]\big\}}
     {\im\big\{\d:  
     \Z[\Theta_{|\sim|'=d - j - 1}] \rightarrow  
	\Z[\Theta_{|\sim|'=d - j}]\big\}},
 \end{eqnarray}
 where the differential $\d$ is given by formula \ref{eq13.7uni}.
\end{corollary}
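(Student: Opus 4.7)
The plan is to deduce this corollary from \ref{th.3.3} by realizing $\bar{\cP}_d^{\Theta}$ as a quotient of a closed subcomplex of $\cB_d$ and then passing to the corresponding quotient of the combinatorial chain complex.

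\textbf{Step 1 (Embedding into the binary-form picture).} Let
$$\tilde{\Xi} \,:=\, \{(\om',\kappa') \in \Om^\infty_{\langle d]} \,:\, \kappa' \geq 1\}
\quad \text{and} \quad
\Theta'\,:=\, \tilde{\Xi} \cup \{(\om,0) \,:\, \om \in \Theta\}.$$
I first verify that both $\tilde\Xi$ and $\Theta'$ are closed subposets of $\Om^\infty_{\langle d]}$. Inspecting the definitions in \ref{eq2.1a}, \ref{eq2.2a}, and of $\sM_0^\infty, \sM_\ell^\infty$, no merge or insert operation on a marked composition ever decreases $\kappa$, so $\tilde\Xi$ is closed. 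For $(\om,0)$ with $\om \in \Theta$, the operations $\sM_0^\infty$ and $\sM_{s_\om}^\infty$ push the result into $\tilde\Xi$, while the operations $\sM_k^\infty$ for $1 \leq k \leq s_\om-1$ and $\sI_k^\infty$ for $0 \leq k \leq s_\om$ produce marked compositions of the form $(\om'',0)$ with $\om'' \prec \om$ in $\Om_{\langle d]}$; these lie in $\Theta$ by hypothesis. Hence $\Theta'$ is closed.

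\textbf{Step 2 (Identifying $\bar{\cP}_d^{\Theta}$ as a quotient).} The subspace $\cB_d^{\tilde\Xi}$ is precisely the space of (classes of) binary forms with a real root at infinity, which equals $\cB_{d-1}$ under the standard embedding $\wp$ of \ref{eq.polys-forms}. Moreover, $\cB_d^{\Theta'} = \cP_d^\Theta \cup \cB_{d-1}$ as a subspace of $\cB_d$, and it is a closed CW-subcomplex by \ref{cor:cw-structurebd}. Collapsing $\cB_{d-1}$ to a single point in $\cB_d^{\Theta'}$ leaves $\cP_d^\Theta$ with exactly one extra basepoint adjoined, i.e.,
$$\cB_d^{\Theta'}\big/\cB_d^{\tilde\Xi}\, \cong\, \bar{\cP}_d^{\Theta}.$$

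\textbf{Step 3 (Passage to the quotient complex).} The same short exact sequence argument that yields \ref{eq.short_exact}, applied now to the CW-pair $\cB_d^{\tilde\Xi} \subset \cB_d^{\Theta'}$, gives
$$0 \,\to\, (\Z[\tilde\Xi], \d^\infty) \,\to\, (\Z[\Theta'], \d^\infty) \,\to\, (\Z[\Theta' \setminus \tilde\Xi], \d^\#) \,\to\, 0,$$
in which $\d^\#$ is obtained from $\d^\infty$ by discarding all terms that land in $\tilde\Xi$. Since CW-pairs are good pairs, the relative homology of the pair coincides with the reduced homology of the quotient, so \ref{th.3.3} (applied to $\Theta'$) combined with the long exact sequence yields
$$\tilde H_j(\bar{\cP}_d^\Theta;\Z)\,\cong\, H_j(\Z[\Theta' \setminus \tilde\Xi],\d^\#).$$
Identifying $(\om,0) \leftrightarrow \om$ turns $\Theta' \setminus \tilde\Xi$ into $\Theta$, with grading $d - |\om|'$.

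\textbf{Step 4 (Matching the differentials).} It remains to check that the induced differential $\d^\#$ on $\Z[\Theta]$ is exactly the $\d$ of \ref{eq13.7uni}. Starting from $\d^\infty((\om,0))$ as defined in \ref{eq13.7}, the only merge terms landing in $\tilde\Xi$ are $\sM_0^\infty((\om,0))$ and $\sM_{s_\om}^\infty((\om,0))$, both of which raise $\kappa$ to a positive value and are therefore killed. The remaining merges $\sM_k^\infty((\om,0)) = (\sM_k(\om),0)$ for $1 \leq k \leq s_\om-1$ and the inserts $\sI_k^\infty((\om,0)) = (\sI_k(\om),0)$ for $0 \leq k \leq s_\om$ survive (the latter only when $|\om| < d$, since otherwise $|\sI_k(\om)| = d+2$ is out of range). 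Finally, when $\om = (d)$ the only merge terms present are $\sM_0^\infty$ and $\sM_1^\infty$, both of which are killed, so $\d^\#((d)) = 0$. This reproduces the three cases of \ref{eq13.7uni} verbatim.

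The main obstacle is really conceptual rather than computational: one needs the correct closed extension $\Theta'$ of $\Theta$ inside $\Om^\infty_{\langle d]}$ so that collapsing $\cB_d^{\tilde\Xi}$ inside $\cB_d^{\Theta'}$ reproduces the one-point compactification, and one must check that the resulting quotient of the combinatorial complex from \ref{th.3.3} loses precisely the $\sM_0^\infty$ and $\sM_{s_\om}^\infty$ contributions from $\d^\infty$. Once these verifications are in place, the remaining signs and index shifts are handled by \ref{lem13.8} and the argument of \ref{prop.3.2}, so no new geometric input is required.
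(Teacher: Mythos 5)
Your proof is correct and follows essentially the same route as the paper: identify $\bar{\cP}_d^\Theta$ with the quotient of a closed CW-subcomplex of $\cB_d$ by its intersection with $\cB_{d-1}$, then read off the induced differential from \ref{eq13.7} by discarding the $\sM_0^\infty$ and $\sM_{s_\om}^\infty$ terms. The only cosmetic difference is your choice of closed poset $\Theta' = \tilde\Xi \cup (\Theta\times\{0\})$ with $\tilde\Xi$ the entire $\kappa\geq 1$ subposet (giving $\cB_d^{\tilde\Xi}=\cB_{d-1}$), whereas the paper takes the \emph{minimal} closed $\Theta^\infty\supseteq\Theta\times\{0\}$ and collapses only $Y=\cB_d^{\Theta^\infty}\cap\cB_{d-1}$ — both produce the same quotient CW-complex, so the resulting combinatorial chain complexes coincide.
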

\begin{proof}
  Consider the decomposition $\cB_{d} = \cP_d \sqcup \cdots \sqcup \cP_0 =\cP_d   \sqcup \cB_{d-1}$. 
  For a closed $\Theta \subseteq \Om_{\langle d]}$, consider the smallest closed subposet $\Theta^\infty 
  \subseteq \Om_{\langle d]}^\infty$ that contains $\Theta \times \{0\}$. 
  Let $\Theta_\ell$ be the subset of all $\om$ such that $(\om,d-\ell) \in \Theta^\infty$. 
   (In particular, $\Theta= \Theta_d$.) 
  Then the above decomposition  induces  decomposition $\cB_d^{\Theta^\infty}
  =  \cP_d^{\Theta_d}  \sqcup \cdots \sqcup \cP_0^{\Theta_0} $. 
 
  Since $\Theta_{d-1} \times \{1\} \cup \cdots \cup \Theta_0 \times \{d\} $ is
  closed, we get that 
  $Y =\cP_{d-1}^{\Theta_{d-1}} \sqcup \cdots \sqcup  \cP_0^{\Theta_0} $ is a closed CW-subcomplex of 
  $\cB_d$. Thus $\bar{\cP}_d^\Theta$ and $\cB_d^{\Theta^\infty}/Y$ are isomorphic as CW-complexes.
  In particular, the cellular differentials of $\bar{\cP}_d^\Theta$ and $\cB_d^{\Theta^\infty}/Y$ coincide. 
  Now the assertion follows from \ref{lem.homology_of_quotient} and our definition of the differential $\d^\#$. 
\end{proof}
  
 \ref{cor13.4} allows us to compute the (co)homology of $\cP_d^{\bfc \Theta} := \cP_d\setminus \bar{\cP}_d^{\Theta} = S^d \setminus \bar{\cP}_d^{\Theta}$ using the Alexander duality.  

\begin{corollary}\label{cor13.4b} 
  Let $\Theta \subset \Om_{\langle d]}$ be a closed subposet. 
  Then for all $j \geq 0$, the homology of $\cP_d^{\bfc \Theta}$ is given by 
  \begin{equation}\label{eq13.8} 
    H^j(\cP_d^{\bfc\Theta}; \Z) \cong H_{d- j - 1}(\d:  \Z[\Theta] \to \Z[\Theta]) 
      := \frac{\ker\{\d:  \Z[\Theta_{|\sim|^{'}=j+1}] \to \Z[\Theta_{|\sim|^{'}=j+ 2}]\}}
      {\im\{\d:  \Z[\Theta_{|\sim|^{'}=j}] \to  \Z[\Theta_{|\sim|^{'}=j+1}]\}}. \hfill \diamondsuit
  \end{equation}
\end{corollary}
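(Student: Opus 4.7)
The plan is to deduce this corollary by combining the computation of $\tilde H_*(\bar{\cP}_d^\Theta;\Z)$ in \ref{cor13.4} with Alexander duality, after identifying $\cP_d^{\bfc\Theta}$ with the complement of $\bar{\cP}_d^\Theta$ in the sphere $S^d$.

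First I would verify the complement identification. Since $\bar{\cP}_d$ is the one-point compactification of $\cP_d\cong\R^d$, it is homeomorphic to $S^d$. By definition $\bar{\cP}_d^\Theta=\cP_d^\Theta\sqcup\{\infty\}$, so
\[
S^d\setminus\bar{\cP}_d^\Theta \;=\; \bar{\cP}_d\setminus\bigl(\cP_d^\Theta\sqcup\{\infty\}\bigr) \;=\; \cP_d\setminus\cP_d^\Theta \;=\; \cP_d^{\bfc\Theta}.
\]
Thus the complement of the compact set $\bar{\cP}_d^\Theta$ in $S^d$ is precisely $\cP_d^{\bfc\Theta}$.

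Next I would invoke Alexander duality. Since $\Theta$ is closed, \ref{cor:cw-structure}(ii) guarantees that $\bar{\cP}_d^\Theta$ is a compact CW-subcomplex of $\bar{\cP}_d\cong S^d$; in particular it is a compact ENR sitting inside the sphere, so the classical form of Alexander duality applies and yields, for every $j\geq 0$, the isomorphism
\[
\tilde H^{j}(\cP_d^{\bfc\Theta};\Z) \;\cong\; \tilde H_{d-j-1}(\bar{\cP}_d^\Theta;\Z).
\]

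Finally I would plug in \ref{cor13.4}. That corollary identifies $\tilde H_i(\bar{\cP}_d^\Theta;\Z)$ with the $i$-th homology of the combinatorial complex $(\Z[\Theta],\d)$ graded so that its $i$-th piece is $\Z[\Theta_{|\sim|'=d-i}]$. Setting $i=d-j-1$ gives
\[
\Z[\Theta_{|\sim|'=d-i}] = \Z[\Theta_{|\sim|'=j+1}], \qquad \Z[\Theta_{|\sim|'=d-i+1}] = \Z[\Theta_{|\sim|'=j+2}],
\]
and similarly the incoming term is $\Z[\Theta_{|\sim|'=j}]$. Substituting these into the quotient formula of \ref{cor13.4} produces exactly the expression \ref{eq13.8} claimed in the corollary.

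The only non-routine point is the applicability of Alexander duality: one must ensure that $\bar{\cP}_d^\Theta$ is sufficiently tame as a subspace of $S^d$. This is where \ref{cor:cw-structure}(ii) is essential — it guarantees a compact CW-structure on $\bar{\cP}_d^\Theta$ that is compatible with the ambient CW-structure on $\bar{\cP}_d$ whose cells are the $\mathring\sR_d^\omega$ together with $\{\infty\}$. With this in place, the rest is a bookkeeping exercise in re-indexing the differential complex from \ref{cor13.4}.
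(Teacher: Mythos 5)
Your proof is correct and matches the paper's own approach exactly: the paper deduces \ref{cor13.4b} from \ref{cor13.4} in a single remark ("\ref{cor13.4} allows us to compute the (co)homology of $\cP_d^{\bfc \Theta} := \cP_d\setminus \bar{\cP}_d^{\Theta} = S^d \setminus \bar{\cP}_d^{\Theta}$ using the Alexander duality"), and you have simply spelled out the complement identification, the CW-structure justification for applying Alexander duality, and the re-indexing $i = d-j-1$ that converts the graded pieces of \ref{cor13.4} into the formula \ref{eq13.8}.
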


With the combinatorial complex that calculates the cohomology of $\cP_d^{\bfc \Theta}$ in place, it is natural to look  for a similar complex that would 
calculate the cohomology of the complement $\cB_d^{\bfc \Theta} := \cB_d \setminus \cB_d^\Theta$ for any closed $\Theta \subseteq \Om_{\langle d]}^\infty$.
Since ${\cB}_d\cong \R\P^d$,  we need to consider the Alexander duality for the real projective spaces. 
With this goal in mind, let us remind the reader of a few standard constructions and notions regarding the Poincar\'{e} duality on 
non-simply-connected manifolds (see \cite{W}). 

\smallskip
Let $X$ be a $d$-dimensional compact connected manifold or, more generally, a Poincar\'{e} CW-complex.  
Let $\Lambda$ be the group ring $\Z[\pi_1(X)]$ of the fundamental group $\pi_1(X)$. An element of $\Lambda$ is a finite combination $\sum_g n_g g$, where $g \in \pi_1(X)$ and $n_g \in \Z$.

We denote by $\tilde X$ the universal cover of $X$ and by $C_\ast(\tilde X,\Z)$ the cellular  
chain complex of $\tilde X$, viewed as a right $\Lambda$-module under the free $\pi_1(X)$-action on $\tilde X$.  We define the homology and cohomology of $X$ with coefficients in a right $\Lambda$-module $\A$ (a local system of coefficients) by
$$H^\ast(X; \mathbb A) :=\; H(\mathsf{Hom}_\Lambda(C_\ast(\tilde X,\Z),\, \A)), \quad
H^t_\ast(X; \A) :=\; H(C_\ast(\tilde X,\Z)\otimes_\Lambda \A^t).
$$
The operation $\otimes_\Lambda$ relies on converting $\A$ into a left $\Lambda$-module $\A^t$ via the formula $\lambda a := a \bar\lambda$, where $a \in \A$ and $\lambda \in \Lambda$.  Here, for any $\lambda = \sum n_g \, g$, we denote by
$\bar\lambda := \sum w(g) n_g\, g^{-1}$, where the homomorphism $w: \pi_1(X) \to \Z_2$ is defined by the first Stiefel-Whitney class, an element of $H^1(X; \Z_2)$. The special case $\A = \Z$ is central for us.\smallskip

Similar definitions of $H^\ast(X, \d X; \A)$ and $H^t_\ast(X, \d X; \A)$ are available for a manifold $X$ with boundary $\d X$. They also make sense for any pair $X \supset Y$, where $Y$ is a compact subcomplex of a Poincar\'{e} complex $X$ (see \cite{W}). 
In such a case, the cap product with the fundamental cycle $[X] \in H_d(X; \Z^t)$ delivers the Poincar\'{e} duality isomorphism $$[X] \cap: H^j(X, K; \A ) \stackrel{\cong}{\rightarrow}  H_{d-j}(X \setminus K; \A^t),$$
where $\A$ is a right $\Z[\pi_1(X \setminus K)]$-module. 
As before,  in order to  convert $\A$ into a left $\Z[\pi]$-module $\A^t$,  we use the Stiefel-Whitney map $$w: \pi_1(X \setminus K) \to \pi_1(X)  \to \Z_2,$$  that describes whether the local orientation of $X$ is preserved or reversed along a loop in $X \setminus K$. 

Now consider the natural homomorphism $H^j(K; \A) \stackrel{\delta^\ast}{\rightarrow} H^{j+1}(X, K; \A)$ from the long exact sequence of the pair $(X, K)$ and compose it with the Poincar\'{e} duality $\D_{X \setminus K} = [X] \cap \sim$. This composition produces the Alexander homomorphism
\begin{eqnarray}
\mathcal A\ell^j:\; H^j(K; \A) \stackrel{\delta^\ast}{\rightarrow} H^{j+1}(X, K; \A) \stackrel{\cong\,\D_{X \setminus K}}{\longrightarrow} H_{d-j -1}(X \setminus K; \A^t).
\end{eqnarray}
Obviously, if $\delta^\ast$ is an isomorphism (monomorphism/epimorphism), then so is $\mathcal A\ell^j$. Similarly, we get a homomorphism
%
\begin{eqnarray}\label{eq.Al_D_general}
\mathcal A\ell_j:\; H^{d-j -1}(X \setminus K; \A) \stackrel{\cong\,\D_{X \setminus K}^{\ast}}{\longrightarrow} H_{j+1}(X, K; \A^t) \stackrel{\d_\ast}{\rightarrow} H_j(K; \A^t).
\end{eqnarray} 
Again if $\d_\ast$ is an isomorphism (monomorphism/epimorphism), then so is $\mathcal A\ell_j$.
\smallskip

We will apply $\mathcal A\ell$ in the case when $X= \R\P^{d}$, $K =  \cB_{d}^\Theta$,   $\A = \Z$, and $\A^t = \Z^t$ is the local coefficient system, defined by the homomorphism
$w: \pi_1(\R\P^{d} \setminus \cB_{d}^\Theta) \to \pi_1(\R\P^{d}) \cong \Z_2$. 

 \begin{theorem}\label{th.main_quadratic_formA}   
  Given  a closed  subposet $\Theta \subset \Om^\infty_{\langle d]}$, set $\bfc\Theta:= \Theta \setminus \Om_{\langle d]}^\infty$. 
   Then for any $j \geq 0$, we get isomorphisms: 
\begin{eqnarray}\label{eq.ADB} 
\qquad \;\;    H^j(\cB_d^{\bfc\Theta}; \Z) \cong \;
       \frac{\ker\big\{\d^\#_{d-j}:\,  \Z[\bfc\Theta_{|\sim|^{'}=d-j}] \longrightarrow \Z[\bfc\Theta_{|\sim|^{'}=d-j-1}]\big\}}
      {\im\big\{\d^\#_{d-j+1}:\,  \Z[\bfc\Theta_{|\sim|'=d-j+1}] \longrightarrow  \Z[\bfc\Theta_{|\sim|'=d-j}]\big\}},
\end{eqnarray}
\begin{eqnarray}\label{eq.ADB_dual} 
\qquad \;\;    H_j(\cB_d^{\bfc\Theta}; \Z^t) \cong \;
       \frac{\ker\big\{(\d^\#_{d-j+1})^\ast:\,  \Z[\bfc\Theta_{|\sim|'=d-j}]^\ast \longrightarrow \Z[\bfc\Theta_{|\sim|'=d-j+1}]^\ast\big\}}
      {\im\big\{(\d^\#_{d-j})^\ast:\,  \Z[\bfc\Theta_{|\sim|'=d - j-1}]^\ast \longrightarrow  \Z[\bfc\Theta_{|\sim|'=d-j}]^\ast\big\}}.
\end{eqnarray}
\end{theorem}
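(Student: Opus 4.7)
The plan is to combine Poincaré–Lefschetz duality for the (generally non-orientable) manifold $\cB_d \cong \R\P^d$ with the combinatorial model for the cellular chain complex of the pair $(\cB_d, \cB_d^\Theta)$ already provided by the short exact sequences \ref{eq.short_exact} and \ref{eq.dual_short_exact}.

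First, I apply Poincaré–Lefschetz duality for the pair $(X,K) = (\cB_d, \cB_d^\Theta)$ with $X \cong \R\P^d$, in both its untwisted and twisted forms:
\[
H^j(\cB_d,\cB_d^\Theta;\Z) \;\cong\; H_{d-j}(\cB_d^{\bfc\Theta};\Z^t), \qquad
H^j(\cB_d,\cB_d^\Theta;\Z^t) \;\cong\; H_{d-j}(\cB_d^{\bfc\Theta};\Z).
\]
Since $\cB_d^\Theta$ is a closed CW-subcomplex of $\cB_d$ (by \ref{cor:cw-structurebd}), excision identifies the left-hand groups with $\tilde H^\ast(\cB_d/\cB_d^\Theta;\A)$ for $\A = \Z$ and $\A = \Z^t$, respectively. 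Thus the two theorem formulas reduce to computing the (co)homology of the quotient CW-complex $\cB_d/\cB_d^\Theta$ in these two coefficient systems.

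Second, I compute $\tilde H^\ast(\cB_d/\cB_d^\Theta;\A)$ combinatorially. For $\A = \Z$ this is an immediate consequence of \ref{lem.homology_of_quotient}: by \ref{eq.dual_short_exact}, the cellular cochain complex of $\cB_d/\cB_d^\Theta$ is $(\Z[\bfc\Theta]^\ast, (\d^\#)^\ast)$, and its cohomology delivers the right-hand side of \ref{eq.ADB_dual}. For $\A = \Z^t$, I identify the twisted cellular cochain complex of $\cB_d/\cB_d^\Theta$ using the preferred orientations $\rho_\omega$ of the cells $\mathring{\sR}^{(\om,\kappa)}_d \cong \Pi_{s_\omega} \times \Sym^{m_\omega}(\mathbb H)$ introduced in the proof of \ref{prop.3.2}. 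The local orientation character of $\R\P^d$ can be read off from how $\rho_\omega$ transforms under the antipodal involution on the universal cover $S^d \to \R\P^d$, and this twist is absorbed cell-by-cell into a sign depending only on $|(\om,\kappa)|'$. The resulting twisted cochain complex is then isomorphic — up to the canonical reindexing by codimension $d - |(\om,\kappa)|'$ — to the combinatorial complex appearing on the right-hand side of \ref{eq.ADB}.

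The main obstacle is the precise bookkeeping of the orientation twist in the last step: one must verify that the extra sign dictated by the antipodal action on $\rho_\omega$ combines correctly with the boundary signs $a_k = (-1)^{k+1}$ and $b_k = (-1)^k$ already computed in the proof of \ref{prop.3.2}, and does not disturb the differential property $(\d^\#)^2 = 0$. A clean way to organize this is to work throughout on the universal cover $S^d$, where ordinary (orientable) Alexander duality applies, and only at the end pass to $\Z_2$-invariants (respectively coinvariants) of the resulting equivariant (co)homology to recover the $\Z$- and $\Z^t$-coefficient versions on $\cB_d \cong \R\P^d$. Since each cell of $\cB_d$ lifts to exactly two equivariant cells in $S^d$ exchanged by the antipodal, the equivariant boundaries reduce to $\d^\#$ with the predictable sign correction, completing the identifications \ref{eq.ADB} and \ref{eq.ADB_dual}.
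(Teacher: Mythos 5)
Your overall strategy matches the paper's: Poincar\'e--Lefschetz duality for $\cB_d\cong\R\P^d$, excision to the pair/quotient $(\cB_d,\cB_d^\Theta)$, and the cellular model of \ref{lem.homology_of_quotient}. That is exactly the paper's own (very terse) argument.

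There is, however, a real gap in your handling of the twisted coefficient system, and it is exactly the point that the paper itself leaves unaddressed. After Lefschetz duality, computing $H^j(\cB_d^{\bfc\Theta};\Z)$ requires the relative homology with $\Z^t$-coefficients, but the combinatorial complex $(\Z[\bfc\Theta],\d^\#)$ furnished by \ref{lem.homology_of_quotient} is the \emph{untwisted} cellular complex (it is a quotient of $(\Z[\Om^\infty_{\langle d]}],\d^\infty)$, which by \ref{prop.3.2} computes $H_*(\cB_d;\Z)$). For $d$ even these two cellular boundary operators genuinely differ. Your proposed fix --- that ``the twist is absorbed cell-by-cell into a sign depending only on $|(\om,\kappa)|'$'' --- cannot be correct: a sign depending only on the grading conjugates the boundary by a diagonal matrix and leaves the homology unchanged, yet $H_*(\R\P^2;\Z)=(\Z,\Z/2,0)$ while $H_*(\R\P^2;\Z^t)=(\Z/2,0,\Z)$. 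Concretely, in the cell structure used here on $\cB_2=\R\P^2$, the untwisted boundary of the $2$-cell $((1,1),0)$ is $((2),0)-2\cdot((1),1)$ (the coefficient $-2$ from $\sM_0=\sM_2$ hitting the same cell twice), whereas the $\Z^t$-twisted boundary is $((2),0)$ (the two arcs land on the \emph{two different} lifts of $((1),1)$ in $S^2$, so they cancel rather than add). No cell-by-cell sign can turn $-2$ into $0$. Your closing remark about working $\Z/2$-equivariantly on $S^d$ is a viable way to do the required bookkeeping, but you leave it as a plan; as written, the proposal does not close the gap.

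One smaller point of bookkeeping: the Lefschetz duality you quote, $H^j(\cB_d,\cB_d^\Theta;\Z^t)\cong H_{d-j}(\cB_d^{\bfc\Theta};\Z)$, delivers the \emph{homology} of $\cB_d^{\bfc\Theta}$, while \ref{eq.ADB} asserts a formula for its \emph{cohomology}. The variant that directly targets \ref{eq.ADB} is $H^j(\cB_d^{\bfc\Theta};\Z)\cong H_{d-j}(\cB_d,\cB_d^\Theta;\Z^t)$, which is what the paper writes; you should state that one explicitly rather than relying on a universal-coefficient detour.
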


\begin{proof} The above isomorphisms  follow from \ref{th.3.3} and \ref{lem.homology_of_quotient}, being combined with the Poincare dualities: 
 $$\mathcal{PD}: H^j(\cB_d^{\bfc\Theta}; \Z) \cong H_{d-j}(\cB_d, \cB_{d}^{\Theta}; \Z^t) \cong H_{d-j}(\cB_d\big/ \cB_{d}^{\Theta}; \Z^t),$$ 
$$\mathcal {PD}^{-1}: H_j(\cB_d^{\bfc\Theta}; \Z^t) \cong H^{d-j}(\cB_d, \cB_{d}^{\Theta}; \Z) \cong H^{d-j}(\cB_d\big/ \cB_{d}^{\Theta}; \Z).
$$
\end{proof}

\noindent
{\bf Classical example: the discriminant of real binary forms} 
To the best of our knowledge, only very few of the spaces $\cB_d^{\Theta}$ or 
$\cB_d^{\bfc\Theta}$ have been considered 
in the literature before. The most significant result, albeit in a different setting, is again due to Vassiliev.
He considered Arnold's situation from \ref{th:Arnold4} for \emph{real binary forms with $k$-moderate singularities} \cite{Va1, Va2}. 
Following Vassiliev's notation, let $\mathcal{HP}_d \cong \R^{d+1}$ denote the space of real binary homogeneous 
forms $Q(x,y)$ of degree $d$. Thus $\cB_d$ is the porjective space of $\mathcal{HP}_d$. 
For $k \geq 2$ let $\Sigma_k\subset \mathcal{HP}_d$ be the subset consisting of all 
forms $Q$ vanishing with multiplicity at least $k$ on some line in $\R^2$. The main theorem of \cite{Va2}, formulated below, describes  the reduced homology  $\bar H^*(\mathcal{HP}_d\setminus \Sigma_k; \Z)$ for any $2\le k \le d$.

\begin{THEO}\label{th:Va2} Fix a number $k \in [2 , d]$.
	\begin{itemize}
		\item[(i)] For $k$ even, the reduced cohomology group $\bar H^*(\mathcal{HP}_d\setminus \Sigma_k; \Z)$ is a free abelian group of rank $2[d/k]+1$ with generators in degrees $$k-2, k-1,\; 2(k-2), 2(k-2)+1,\; \dots \; , \; [d/k](k-2), [d/k](k-2)+1, \text{ and } d-2[d/k].$$ 
               \item[(ii)] For $k$ odd and $d$ not divisible by $k$, $\bar H^*(\mathcal{HP}_d\setminus \Sigma_k; \Z)$  is a direct product of the following groups:
		       \begin{itemize}
                         \item[(a)] for any $p=1,2,\dots, [d/k]$ such that $d-pk$ is odd,  $\Z$ in dimension $p(k-2)$ and $\Z$ in dimension $p(k-2)+1$;
                         \item[(b)] for any $p=1,2,\dots, [d/k]$ such that $d-pk$ is even,  $\Z/2\Z$  in dimension \hfill \break $p(k-2)+1$;
                         \item[(c)] $\Z$  in dimension $d-2[d/k]$. 
		       \end{itemize}
              \item[(3)] For $k$ odd and $d$ divisible by $k$, the answer is almost the same as in Case 2, but the summand $\Z/2\Z$ in dimension $d-2(d/k)+1$ disappears.  \hfill $\diamondsuit$
	\end{itemize}
\end{THEO}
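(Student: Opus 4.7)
The plan is to recast $\mathcal{HP}_d\setminus\Sigma_k$ in the framework of the paper and then compute the resulting combinatorial complex. Introduce the subposet $\Theta_k\subset\Om^\infty_{\langle d]}$ consisting of marked compositions $(\omega,\kappa)$ with some $\omega_i\geq k$ or $\kappa\geq k$; since merging preserves the ``$\ge k$'' property and inserting only produces a part of size $2$, the subposet $\Theta_k$ is closed in the sense of \ref{def2.1}. Because $\Sigma_k\subset\mathcal{HP}_d$ is an $\R^\ast$--invariant cone, radial retraction gives a homotopy equivalence
\[
\mathcal{HP}_d\setminus\Sigma_k\ \simeq\ S^d\cap(\mathcal{HP}_d\setminus\Sigma_k),
\]
and the right--hand side is the restriction of the canonical double cover $S^d\to\R\P^d=\cB_d$ to the open subset $\cB_d^{\bfc\Theta_k}$. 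The Leray--Serre spectral sequence of this $S^0$--bundle degenerates at $E_2$ and yields an integral splitting
\[
\bar H^\ast(\mathcal{HP}_d\setminus\Sigma_k;\Z)\ \cong\ H^\ast(\cB_d^{\bfc\Theta_k};\Z)\ \oplus\ H^\ast(\cB_d^{\bfc\Theta_k};\Z^t),
\]
where $\Z^t$ is the local system twisted by $w_1(\R\P^d)$. By \ref{th.main_quadratic_formA}, both summands are computed by the combinatorial complex $(\Z[\bfc\Theta_k],\d^\#)$ of \ref{eq.ADB} and its $\Z$--dual.

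I would then compute $(\Z[\bfc\Theta_k],\d^\#)$ directly. Its generators are marked compositions with every $\omega_i,\kappa\in\{0,\ldots,k-1\}$ and $|(\omega,\kappa)|\equiv d\pmod 2$, and $\d^\#$ is obtained from $\d^\infty$ of \ref{eq13.7} by zeroing every merge whose output leaves $\bfc\Theta_k$ (and every insert when $k=2$). I would build an acyclic discrete Morse matching pairing each non--saturated composition (one containing a part $\omega_i<k-1$) with its $\sI^\infty$-- or $\sM^\infty$--partner, so that the critical cells are the saturated patterns $((k-1)^p,r,\kappa)$ with $0\le r\le k-2$, $0\le\kappa\le k-1$, $p(k-1)+r+\kappa\le d$, and the correct parity. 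For each level $p\in\{0,1,\ldots,\lfloor d/k\rfloor\}$ this leaves a pair of critical cells of reduced norms $p(k-2)$ and $p(k-2)+1$, together with a single top cell of reduced norm $d-2\lfloor d/k\rfloor$ when $k\nmid d$. After the Alexander--duality shift $j\mapsto d-j-1$ of \ref{eq.ADB}, these reduced norms correspond exactly to the cohomological degrees appearing in \ref{th:Va2}.

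The main obstacle is the sign analysis of the induced differential on the critical cells, combined with the interplay with the local system $\Z^t$. The merge boundary between the two critical cells of level $p$ evaluates to $1+(-1)^{k-1}$: for $k$ even it vanishes, so both $\Z$'s persist as independent free summands in adjacent degrees, producing case (i). For $k$ odd it equals $\pm 2$, which gives a $\Z/2\Z$ cokernel in the untwisted summand and, after the orientation twist, a shifted free pair $\Z\oplus\Z$ in the $\Z^t$--summand; depending on the parity of $d-pk$ exactly one of these contributions survives in the direct sum above, reproducing the dichotomy of cases (ii)(a) and (ii)(b). Case (iii), in which $k\mid d$, arises because the top level $p=d/k$ forces $r=\kappa=0$ and the composition $((k-1)^{d/k},0,0)$ admits an extra $\sM^\infty_0$-- or $\sM^\infty_{s_\omega}$--image lying in $\Theta_k$; this additional boundary annihilates precisely the $\Z/2\Z$ that would otherwise appear in dimension $d-2(d/k)+1$. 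Once this sign and parity bookkeeping is carried out on the critical cells, the three cases of \ref{th:Va2} drop out of the direct sum decomposition from Step~1 combined with the combinatorial computation of Steps~2--3.
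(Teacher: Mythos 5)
This statement is \ref{th:Va2}, and the paper's convention (announced in \ref{sec:intro}) is that lettered theorems are borrowed from the literature. The text explicitly attributes this result to Vassiliev's \cite{Va2} and gives no proof; there is therefore nothing in the paper to compare your argument against. Vassiliev's own route (a simplicial resolution of the discriminant) is entirely different from the combinatorial machinery developed here, so your attempt is, in spirit, a genuinely new take. It is a reasonable instinct to try to re-derive \ref{th:Va2} from \ref{th.main_quadratic_formA}, and the geometric reduction to the double cover $S^d\cap(\mathcal{HP}_d\setminus\Sigma_k)\to\cB_d^{\bfc\Theta_k}$ is sound.

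However, Step~1 contains a fatal error. For a double cover $p:\tilde X\to X$ one has a short exact sequence of local systems $0\to\Z^t\to p_*\Z\to\Z\to 0$ (and its twin with $\Z$ and $\Z^t$ swapped), which yields a \emph{long exact sequence} relating $H^\ast(\tilde X;\Z)$, $H^\ast(X;\Z)$, and $H^\ast(X;\Z^t)$ --- not a direct-sum splitting. The isomorphism you assert,
$$\bar H^\ast(\mathcal{HP}_d\setminus\Sigma_k;\Z)\ \cong\ H^\ast(\cB_d^{\bfc\Theta_k};\Z)\ \oplus\ H^\ast(\cB_d^{\bfc\Theta_k};\Z^t),$$
already fails for the model case $S^2\to\R\P^2$: the right-hand side is $\Z,\ \Z/2\Z,\ \Z\oplus\Z/2\Z$ in degrees $0,1,2$, while $H^\ast(S^2;\Z)=\Z,\ 0,\ \Z$. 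Such a splitting holds only after inverting $2$, and since the entire content of cases (ii) and (iii) of \ref{th:Va2} is precisely the $2$-torsion, the argument collapses at the point where it would matter most. Beyond that, the discrete Morse matching in Steps~2--3 is unverified (no acyclicity check, and the putative critical cells $((k-1)^p,r,\kappa)$ are merely listed), and the sign computation $1+(-1)^{k-1}$ for the induced differential is asserted, not derived. To salvage the approach you would need to (a) replace the splitting by the transfer/Gysin long exact sequence of the double cover and resolve its extension problems by hand, and (b) carry out an honest Morse-theoretic computation of both $H^\ast(\cB_d^{\bfc\Theta_k};\Z)$ and $H^\ast(\cB_d^{\bfc\Theta_k};\Z^t)$ inside the relevant quotient complex $(\Z[\bfc\Theta_k],\d^\#)$.
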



 As an example of  application of \ref{th.main_quadratic_formA} and \ref{th.3.3}, let us now compute the reduced homology of the {\sf discriminant variety} and the homotopy type of its complement in our projective setting.  \ref{cor:homdiscr} should be compared with \ref{th:Va2} for $k=2$, in which case, the latter simplifies greatly.
 \smallskip 

Let $\D_d \subset \cB_d$ be the discriminant variety that consists of all classes of binary degree $d$ forms with at 
least one real root of multiplicity $\geq 2$.  Note that $\D_d = \cB_d^{\Theta_\disc}$ for 
$$\Theta_\disc = \Big\{ ((\om_1,\ldots, \om_\ell),\kappa) \in \Om_{\langle d]}^\infty ~\Big| \; \exists\;\; 1 \leq i \leq \ell :\; \om_i \geq 2 \text{ or } \kappa \geq 2\Big\}.$$

The following result is contained in  Example 1 and Lemma 1 of \cite {Va2}. 

\begin{proposition}\label{lem.complement to discriminant} 
  The connected components of $\cB_d^{\bfc\Theta_\disc} = \cB_d \setminus \D_d$ are labeled by marked compositions 
	$((\underbrace{1,\ldots, 1}_\ell),0)$, where $\ell \geq 0$ and $d-\ell \geq 0$ is even. If $d$ is odd, then each connected component of  $\cB_d^{\bfc\Theta_\disc} = \cB_d \setminus \D_d$ is homotopy equivalent to a circle;  
  if $d$ is even, then each connected component of  $\cB_d^{\bfc\Theta_\disc} = \cB_d \setminus \D_d$ is homotopy equivalent to a circle, except for the component labeled by $((),0)$, which is contractible.
\end{proposition}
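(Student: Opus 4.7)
The plan is to identify $\cB_d^{\bfc\Theta_\disc}$ with a natural parameter space of $\hat J$-symmetric divisors on $\C\P^1$ and then to read off the homotopy type of each component from that description.

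First, a class of binary form in $\cB_d$ is determined by its effective $\hat J$-symmetric zero divisor of degree $d$ on $\C\P^1$, and it lies in $\cB_d^{\bfc\Theta_\disc}$ precisely when this divisor is reduced at every point of $\R\P^1 \subset \C\P^1$. Splitting such a divisor into its real part (an $r$-element subset of $\R\P^1$) and its complex part (a divisor of degree $(d-r)/2$ on one of the two open hemispheres $\mathbb H$ of $\C\P^1\setminus \R\P^1$, obtained by passing to the quotient of the conjugation action $\hat J$) produces a natural homeomorphism
\[
  \cB_d^{\bfc\Theta_\disc} \;\cong\; \bigsqcup_{\substack{0\le r\le d\\ d-r \text{ even}}} \Conf^r(\R\P^1)\times \Sym^{(d-r)/2}(\mathbb H).
\]
Each summand is open, and as a product of two path-connected spaces is itself path-connected; hence these are precisely the connected components of $\cB_d^{\bfc\Theta_\disc}$. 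The summand indexed by $r$ contains the top-dimensional open cell $\mathring{\sR}_d^{((\underbrace{1,\ldots,1}_r),0)}$, which justifies the labeling by $((\underbrace{1,\ldots,1}_\ell),0)$ (with $\ell=r$) and the parity constraint that $d-\ell$ is non-negative and even.

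Next I would read off the homotopy type of each component. Since $\mathbb H$ is homeomorphic to $\C$, the elementary symmetric polynomials identify $\Sym^{(d-r)/2}(\mathbb H)$ with $\C^{(d-r)/2}$, which is contractible. Hence the $r$-th component is homotopy equivalent to $\Conf^r(\R\P^1)=\Conf^r(S^1)$. When $r=0$ (which, by the parity constraint, occurs precisely when $d$ is even), this is a single point and yields the unique contractible component, labeled by $((\,),0)$. For $r\geq 1$ one invokes the classical homotopy equivalence $\Conf^r(S^1)\simeq S^1$, which accounts for the remaining components in both the odd-$d$ and even-$d$ cases.

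The principal obstacle in implementing this plan is a rigorous verification that the displayed homeomorphism is bi-continuous and, relatedly, that no path in $\cB_d^{\bfc\Theta_\disc}$ can connect two summands with distinct values of $r$. The latter assertion is intuitively clear — a continuous change of $r$ forces either two distinct real roots to collide or a pair of non-real conjugate roots to meet on $\R\P^1$, both events lying in $\D_d$ — but formalizing it requires the standard continuity of the zero-divisor map with respect to the coefficients on the complement of the discriminant. A secondary point is the classical homotopy equivalence $\Conf^r(S^1)\simeq S^1$ for $r\geq 1$, which I would cite rather than re-derive.
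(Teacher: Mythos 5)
Your proposal is correct and follows essentially the same route as the paper: decompose the complement by the number $r=\ell$ of real roots on $\R\P^1$, identify the $\ell$-th piece as $\Conf^\ell(\R\P^1)\times\Sym^{(d-\ell)/2}(\mathbb H)$, contract the symmetric-product factor, and invoke $\Conf^\ell(S^1)\simeq S^1$. The one difference worth flagging: the ``principal obstacle'' you identify at the end (bi-continuity of the decomposition and the impossibility of a path crossing between summands) is handled in the paper painlessly by the CW structure from \ref{cor:bd} rather than by appealing to continuity of the root map. Each piece $\mathcal C_{\ell,d}$ with $\ell\geq 1$ is exactly the union of the two open cells $\mathring{\sR}_d^{((1,\ldots,1),0)}$ and $\mathring{\sR}_d^{((1,\ldots,1),1)}$ (with $\ell$ and $\ell-1$ ones respectively), while $\mathcal C_{0,d}$ is the single cell $\mathring{\sR}_d^{((),0)}$; since these cell unions exhaust the cells lying outside $\D_d$ and each such union is both open and closed in the complement, the component decomposition is immediate without any separate path argument. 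Your divisor-theoretic phrasing is equivalent and the resulting identification agrees with the paper's, so the proof is sound once that point is closed off, e.g.\ by the cell-structure observation.
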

\begin{proof} 
  Let $\ell$ be such that $d-\ell$ is even. 
  Then, for $\ell \neq 0$ and 
	$\hat\om_1 := ((\underbrace{1,\ldots, 1}_\ell)),0)$, $\hat\om_2 := (\sM_\ell^\infty(\hat\om_1)) = (\sM_0^\infty(\hat\om_1)) = ((\underbrace{{1,\ldots, 1}}_{\ell-1}),1)$,  the union
  $\mathcal C_{\ell,d} = \mathring{\sR}_d^{\hat\om_1} \cup \mathring{\sR}_d^{\hat\om_2}$ is a connected component of 
  $\cB_d \setminus \D_d$. If $d$ is even and $\ell = 0$, then
  $\mathcal C_{1,d} = \mathring{\sR}_d^{((),0)}$ is a connected component. 
  Since every cell of $\cB_d$ that is not contained in $\D_d$ is contained in some 
  $\mathcal C_{\ell,d}$, these are all the connected components.  

  The space $\mathcal C_{0,d} = \mathring{\sR}_d^{((),0)}$ is 
  an open cell and hence contractible.  
  Further, for $\ell \neq 0$ and even $d-\ell$, the polynomials in the 
  corresponding connected components have 
  $\ell$ simple real roots (including the root at infinity)  
  and $\frac{d-\ell}{2}$ complex-conjugate pairs of roots in $\C\P^1\setminus \R\P^1$, counted with their multiplicities. 
  Then $\mathcal C_{\ell,d}$ is the 
  Cartesian product of the configuration space $\textup{Conf}^\ell(\R\P^1)$ and $\Sym^{\frac{d-\ell}{2}}(\mathbb H)$.
 Since the upper half-plane $\mathbb H$ is contractible, $\Sym^{\frac{d-\ell}{2}}(\mathbb H)$ is contractible as well.
  It is well-known that $\textup{Conf}^\ell(\R\P^1) = \textup{Conf}^\ell(S^1)$ is homotopy equivalent to a circle \cite{Mor}.  
 As in \cite{Mor}, there is a fibration  $\textup{Conf}^\ell(S^1) \to S^1$ whose fiber is the open simplicial cone $\Pi_{\ell-1}$.
  Since the latter is contractible, we conclude that $\textup{Conf}^\ell(S^1)$ is homotopy  equivalent to a circle, which completes the proof. 
\end{proof}

The following lemma is the crucial step for the determination of the homology of $\D_d$. 

\begin{lemma} \label{lem:crucial} ~
   \begin{itemize}
	   \item[(i)] $H_i(\cB_d, \mathcal D_d; \Z) = 0$ for $i \leq d-2$. 
	   \item[(ii)] If $d$ is odd, then
		   $H_d(\cB_d, \mathcal D_d; \Z)\cong \Z^{(d+1)/2}$ and 
 $H_{d-1}(\cB_d, \mathcal D_d; \Z)\cong \Z^{(d+1)/2}$.
	   \item[(iii)] If $d$ is even, then $H_d(\cB_d, \mathcal D_d; \Z)\cong \Z$ and $H_{d-1}(\cB_d, \mathcal D_d; \Z)\cong (\Z/2\Z)^{d/2}$.
   \end{itemize}
\end{lemma}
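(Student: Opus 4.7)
The strategy is to apply the combinatorial quotient chain complex of \ref{lem.homology_of_quotient} with $\Theta = \Theta_{\disc}$. Since $(\cB_d,\mathcal D_d)$ is a CW-pair, the relative homology $H_j(\cB_d,\mathcal D_d;\Z)$ coincides with $\tilde H_j(\cB_d/\cB_d^{\Theta_{\disc}};\Z)$, so the three statements will follow from a direct computation of $(\Z[\Om^\infty_{\langle d]}\setminus\Theta_{\disc}],\d^\#)$. The crucial observation is that the complement $\bfc\Theta_{\disc}$ is very small: the marked compositions with all parts equal to $1$ and $\kappa \leq 1$ are exactly $e_{\ell,\kappa} := ((1^\ell),\kappa)$ with $\kappa \in \{0,1\}$, subject to $\ell+\kappa \leq d$ and $\ell+\kappa \equiv d \pmod 2$.

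Since $|e_{\ell,\kappa}|' = \kappa \in \{0,1\}$, the quotient chain complex is concentrated in homological degrees $d$ and $d-1$, which settles (i) immediately. To evaluate $\d^\#: C_d \to C_{d-1}$, I would note that every insert operation $\sI_k^\infty$ introduces a part equal to $2$, hence lands in $\Theta_{\disc}$ and is killed, and that the inner merges $\sM_k^\infty$ for $1 \leq k \leq \ell-1$ also produce a $2$. Only the boundary merges $\sM_0^\infty$ and $\sM_\ell^\infty$ can survive: they both send $e_{\ell,0}$ to $e_{\ell-1,1}$, while on $e_{\ell,1}$ they raise $\kappa$ to $2$ and get killed. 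Reading signs off \ref{eq13.7} then yields
\[
\d^\# e_{\ell,0} = -\bigl(1+(-1)^\ell\bigr)\, e_{\ell-1,1}\quad (\ell \geq 1), \qquad \d^\# e_{0,0} = 0, \qquad \d^\# e_{\ell,1} = 0.
\]

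Finally I would read off the homology case by case. For $d$ odd, every generator in degree $d$ has $\ell$ odd, so $(-1)^\ell = -1$ and $\d^\# \equiv 0$; both graded pieces carry $(d+1)/2$ generators, giving (ii). For $d$ even, degree-$d$ generators have $\ell$ even, so $\d^\# e_{\ell,0} = -2\,e_{\ell-1,1}$ for $\ell \geq 2$, while $e_{0,0}$ is a cycle; the matrix of $\d^\#$ takes the block form $(-2 I_{d/2}\mid 0)$ up to reordering columns, with kernel $\Z\langle e_{0,0}\rangle$ and cokernel $(\Z/2\Z)^{d/2}$, yielding (iii). The only delicate point in this plan is the bookkeeping of which boundary cells each merge or insert produces and whether they belong to $\bfc\Theta_{\disc}$; once that is laid out, no serious obstacle remains.
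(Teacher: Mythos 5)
Your proposal is correct and coincides in substance with the second of the paper's two proofs of this lemma (the one based on \ref{th.3.3} / \ref{lem.homology_of_quotient}): both identify the relative chain groups as being concentrated in degrees $d$ and $d-1$ spanned by $((1^\ell),0)$ and $((1^\ell),1)$, and both extract the key fact that, modulo cells in $\D_d$, $\d^\#((1^\ell),0)$ equals $0$ for $\ell$ odd and $-2\,((1^{\ell-1}),1)$ for $\ell \geq 2$ even. Your closed-form sign bookkeeping $\d^\# e_{\ell,0}=-(1+(-1)^\ell)\,e_{\ell-1,1}$ is a clean way to package the cancellation of the $\sM_0^\infty$ and $\sM_{\ell}^\infty$ contributions, which the paper states more verbally; the paper additionally offers an alternative proof via Poincar\'{e} duality and \ref{lem.complement to discriminant}, a route you do not take.
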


We provide \emph{two} proofs of the lemma in order to demonstrate  the applicability of both \ref{th.main_quadratic_formA} and of \ref{th.3.3}.

\begin{proof}[Proof of \ref{lem:crucial}, based on \ref{th.main_quadratic_formA}]
  By the Poincar\'{e} duality and \ref{th.main_quadratic_formA}, 
   $$H_j(\cB_d, \mathcal D_d; \Z)\cong H^{d-j}(\cB_d\setminus \mathcal D_d; \Z^t),$$
  where, for $d$ odd, the local system of coefficients $\Z^t$ is constant, and, for $d$ even, the local coefficient system $\Z^t$ is twisted by the monodromy along the generator of $\pi_1(\cB_d) \cong \Z/2\Z$. 

  For an odd $d$, by \ref{lem.complement to discriminant}, the space $\cB_d\setminus \mathcal D_d$ is homotopy equivalent 
  to a disjoint union of $(d+1)/2$ circles, which implies that $H^0(\cB_d\setminus \mathcal D_d; \Z)\cong \Z^{(d+1)/2}$, 
  and $H^1(\cB_d\setminus \mathcal D_d; \Z)\cong \Z^{(d+1)/2}$, and all other cohomology groups vanish. As a result, 
  for $d$, odd we get $H_d(\cB_d, \mathcal D_d; \Z)\cong \Z^{(d+1)/2}$, and 
  $H_{d-1}(\cB_d, \mathcal D_d; \Z)\cong \Z^{(d+1)/2}$, and all other homology groups vanish. 

  For an even $d$, by \ref{lem.complement to discriminant}, the space $\cB_d\setminus \mathcal D_d$ is homotopy equivalent to a disjoint union of a point and  $d/2$ circles. By the twist in the local coefficient system $\Z^t$, we get $H^0(\cB_d\setminus \mathcal D_d; \Z^t) \cong \Z$ and  
  $H^1(\cB_d\setminus \mathcal D_d; \Z^t)\cong (\Z/2\Z)^{d/2}$. 
  All other cohomology groups $H^\ast(\cB_d\setminus \mathcal D_d; \Z^t)$ vanish. Thus, for $d$ even, 
  again by the Poincar\'{e} duality and \ref{th.main_quadratic_formA}, we get $H_d(\cB_d, \mathcal D_d; \Z)\cong \Z$, 
  $H_{d-1}(\cB_d, \mathcal D_d; \Z)\cong (\Z/2\Z)^{d/2}$, and all other homology groups vanish.
\end{proof}

\begin{proof}[Proof of \ref{lem:crucial}, based on \ref{th.3.3}]
	Since in our cellulation the cells in $\cB_d \setminus \D_d$ are of dimension $d$ and $d-1$, the cellular
	chain complex of the pair $(\cB_d,\D_d)$, arising from \ref{th.3.3}, has trivial chain groups in dimensions
	$\neq d,d-1$. This instantly implies (i).\smallskip

	From the preceding arguments it also follows that the  group of cycles in dimension $d-1$
	of the cellular chain complex of $(\cB_d,\D_d)$ is freely generated by the cells of dimension 
	$d-1$ in $\cB_d \setminus \D_d$. 

	For $d$ odd, the $d$-cells in $\cB_d \setminus \D_d$ are labeled by 
	$((\underbrace{1,\ldots,1}_{2j+1}),0)$, where $j \in [0,\frac{d-1}{2}]$, and the $(d-1)$-dimensional cells by  $((\underbrace{1,\ldots,1}_{2j}),1)$, where 
	$j\in [1,\frac{d-1}{2}]$.
	By \ref{th.3.3}, the differential of the cells of dimension $d$ is fully contained in $\D_d$. Hence
	$H_d(\cB_d,\D_d;\Z)$ is freely generated by the cells of dimension $d$ and
	$H_{d-1}(\cB_d,\D_d;\Z)$ is freely generated by the cells of dimension $d-1$.
	A simple counting argument then yields (ii).

	For $d$ even, the $d$-cells in $\cB_d \setminus \D_d$ are labeled by $((\underbrace{1,\ldots,1}_{2j}),0)$, where $j \in [0,
	\frac{d}{2}]$, and the $(d-1)$-dimensional cells by  $((\underbrace{1,\ldots,1}_{2j-1}),1)$, where $j \in [1,\frac{d}{2}]$.
	By \ref{th.3.3}, the differential of the cell, labeled by
	$((\underbrace{1,\ldots,1}_{2j}),0)$, consists only of cells from $\D_d$ if $j = 0$, and 
	twice the cell, labeled by $((\underbrace{1,\ldots,1}_{2j-1}),1)$, plus some terms which correspond to cells from $\D_d$ when $j\in [1,\frac{d}{2}]$. 
	By simple counting and linear algebra, we then have $H_d(\cB_d,\D_d;\Z) = \Z$ and
	$H_{d-1}(\cB_d, \mathcal D_d; \Z)\cong (\Z/2\Z)^{d/2}$. This validates (iii).
\end{proof}
%
 %
%
%
Finally, we are in position to compute $\bar H_\ast(\mathcal D_d;\Z)$.

\begin{proposition}\label{cor:homdiscr} 
  The non-zero reduced {\it integral} homology groups $\bar H_i(\mathcal D_d;\Z)$ of the discriminant $\D_d\subset \cB_d$ 
  have the following description: 

		\begin{itemize}
			\item[(i)] 
				if $d$ is odd, then $$\bar H_{d-1}(\D_d;\Z)\cong \Z^{(d-1)/2},\;  \bar H_{d-2}(\mathcal D_d;\Z)\cong \Z^{(d+1)/2}, \text{ and }$$ 
$$\bar H_{d-4}(\D_d;\Z) \cong \bar H_{d-6}(\D_d;\Z) \cong \cdots \cong \bar H_1(\D_d;\ZZ) \cong \Z/2\Z.$$ 
The rest of the homology vanishes.

\item[(ii)] if $d$ is even, then $$\bar H_{d-1}(\D_d;\Z)\cong \Z,\;  \bar H_{d-2}(\D_d;\Z)\cong (\Z/2\Z)^{(d/2)-1}, $$
 $$\text{ and }\; \bar H_{d-3}(\D_d;\Z)\cong \bar H_{d-5}(\D_d;\Z)\cong \cdots \cong \bar H_1(\D_d;\Z)\cong \Z/2\Z.$$
 The rest of the homology vanishes.
 \end{itemize}
\end{proposition}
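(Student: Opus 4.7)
\emph{Proof plan.}

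The approach is the long exact sequence of the pair $(\cB_d,\D_d)$ coupled with Lemma~\ref{lem:crucial} and the standard integral homology of $\cB_d\cong\R\P^d$. Since $\D_d\subset\cB_d$ is a CW-subcomplex of pure dimension $d-1$, $H_d(\D_d;\Z)=0$. For $i\leq d-3$ both $H_i(\cB_d,\D_d;\Z)$ and $H_{i+1}(\cB_d,\D_d;\Z)$ vanish by Lemma~\ref{lem:crucial}(i), so the long exact sequence yields $H_i(\D_d;\Z)\cong H_i(\R\P^d;\Z)$. In reduced form this produces a $\Z/2\Z$-summand in every odd degree $1\leq i\leq d-3$ and vanishing elsewhere in that range, matching the claim in both parities.

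Plugging Lemma~\ref{lem:crucial}(ii,iii) into the tail of the long exact sequence gives, for $d$ odd,
\[
0\to\Z\xrightarrow{\alpha}\Z^{(d+1)/2}\to H_{d-1}(\D_d;\Z)\to 0,\qquad 0\to\Z^{(d+1)/2}\to H_{d-2}(\D_d;\Z)\to\Z/2\Z\to 0,
\]
and, for $d$ even,
\[
0\to\Z\to H_{d-1}(\D_d;\Z)\to\Z/2\Z\xrightarrow{\beta}(\Z/2\Z)^{d/2}\to H_{d-2}(\D_d;\Z)\to 0.
\]
The map $\alpha$ sends the fundamental class $[\cB_d]$ (which exists since $\R\P^d$ is orientable when $d$ is odd) to the signed sum of the $(d+1)/2$ top-dimensional cells of $\cB_d\setminus\D_d$; such a signed sum of basis vectors is primitive in $\Z^{(d+1)/2}$, so its cokernel is free of rank $(d-1)/2$, yielding $H_{d-1}(\D_d;\Z)\cong\Z^{(d-1)/2}$.

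The proposition now reduces to two arithmetic points: the $d$-odd extension is non-split (so $H_{d-2}(\D_d;\Z)\cong\Z^{(d+1)/2}$) and $\beta$ is injective for $d$ even (so $H_{d-1}(\D_d;\Z)\cong\Z$ and $H_{d-2}(\D_d;\Z)\cong(\Z/2\Z)^{d/2-1}$). \emph{The main obstacle} is to settle both, and I expect to do so through the injectivity of the mod-$2$ restriction
\[
\beta_2\colon H^1(\cB_d;\Z/2\Z)\;\longrightarrow\; H^1(\cB_d\setminus\D_d;\Z/2\Z),
\]
which is the Poincar\'e--Lefschetz dual of the mod-$2$ reduction of $\beta$. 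By \ref{lem.complement to discriminant} the target splits as a sum of $\Z/2\Z$'s indexed by the circle components $\mathcal C_{\ell,d}$ of $\cB_d\setminus\D_d$, so $\beta_2$ is injective iff each such circle is a non-contractible loop in $\R\P^d$. To check this, I parametrize a generator of $\pi_1(\mathcal C_{\ell,d})$ by
\[
f_\phi(x,y)\;=\;Q(x,y)\cdot\prod_{k=0}^{\ell-1}\!\bigl(x\sin(\phi+k\pi/\ell)-y\cos(\phi+k\pi/\ell)\bigr),\qquad \phi\in[0,\pi/\ell],
\]
where $Q$ is any fixed real form of degree $d-\ell$ with no real roots (e.g.\ $(x^2+y^2)^{(d-\ell)/2}$). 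Using the identity $x\sin(\phi+\pi)-y\cos(\phi+\pi)=-(x\sin\phi-y\cos\phi)$ on the terminal factor swept past the period of $\R\P^1$, one obtains $f_{\phi+\pi/\ell}=-f_\phi$ in $\mathcal{HP}_d$, so the lift of the loop $\phi\mapsto[f_\phi]$ to the double cover $S^d\to\R\P^d$ runs from $f_0$ to its antipode and is not closed; hence $\mathcal C_{\ell,d}$ represents the non-trivial element of $\pi_1(\R\P^d)=\Z/2\Z$, and $\beta_2$ is injective. For $d$ even this immediately yields the claimed $H_{d-1}(\D_d;\Z)$ and $H_{d-2}(\D_d;\Z)$ from the displayed exact sequence. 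For $d$ odd, the mod-$2$ long exact sequence then computes $\dim H_{d-2}(\D_d;\Z/2\Z)=(d+1)/2$, and the universal coefficient theorem, combined with the already-determined integer rank of $H_{d-2}$ and the vanishing of $H_{d-3}(\D_d;\Z)$, forces the would-be $\Z/2\Z$-summand in $H_{d-2}(\D_d;\Z)$ to be absent, giving $H_{d-2}(\D_d;\Z)\cong\Z^{(d+1)/2}$.
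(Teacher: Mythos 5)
Your proof is correct, but it takes a genuinely different route at the decisive step. Both arguments start the same way: the long exact homology sequence of $(\cB_d,\D_d)$, \ref{lem:crucial}, and the known homology of $\R\P^d$ reduce the proposition to pinning down one or two maps near the top of the sequence. The paper resolves these entirely inside the cellular complex of \ref{th.3.3}: it exhibits the explicit relative cycles $\gamma_\ell$ supported on cells labelled $((1,\ldots,1),1)$, observes that the cells labelled $((1,\ldots,1),2)$ detect them, produces the auxiliary cycle $\gamma'$ with $\gamma_0+\cdots+\gamma_{d-1}=2\gamma'$ to settle the $d$-odd extension, and separately tracks the boundary of the cell $((),0)$ through $((2),0)$ for $d$ even. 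You replace all of this with a single geometric fact --- each circle component $\mathcal C_{\ell,d}\subset\cB_d\setminus\D_d$ is non-nullhomotopic in $\R\P^d$, proved neatly by the pencil $f_\phi$ whose lift to $S^d$ over $[0,\pi/\ell]$ joins antipodes --- and then let mod-$2$ Poincar\'e--Lefschetz duality, a $\Z/2$-dimension count, and the universal coefficient theorem (using $H_{d-3}(\D_d;\Z)=0$) do the rest, handling both parities with one input. A secondary, smaller divergence: for $d$ odd the paper concludes $H_{d-1}(\D_d;\Z)$ is free because it is top-dimensional homology, while you instead note the image of $[\cB_d]$ in $H_d(\cB_d,\D_d;\Z)\cong\Z^{(d+1)/2}$ is a primitive vector; both work. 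The trade-off is the usual one: the paper's cellular route is self-contained in its combinatorial complex and supplies explicit generators, whereas yours is shorter and makes $\pi_1$ do the work at the cost of a detour through $\Z/2$ coefficients. One small wording caveat: ``$\beta_2$ is injective iff \emph{each} such circle is non-contractible'' should read ``iff \emph{at least one} circle is''; since you prove the stronger statement, nothing is lost.
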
 

\begin{proof} 
	Since $\D_d$ has no cells in dimension $d$ by \ref{th.3.3} we have ${H}_d(\D_d;\Z) = 0$. 

  Consider the long exact homology sequence of the pair $\D_d\subset \cB_d$ with coefficients in $\Z$:

	\begin{figure}
	\begin{center}
		\begin{tikzcd}
			  0\cong H_d(\D_d;\Z) \arrow[r] 
			& H_d(\cB_d;\Z) \arrow[d,phantom,""{coordinate,name=Z}] \arrow[r] 
			& H_d(\cB_d, \D_d;\Z) \arrow[dll,rounded corners,to path={ --([xshift=2ex]\tikztostart.east)|- (Z)[near end]\tikztonodes-| ([xshift=-2ex]\tikztotarget.west)-- (\tikztotarget)}] \\	
		        H_{d-1}(\D_d;\Z) \arrow[r] 
			& H_{d-1}(\cB_d;\Z)\arrow[r] \arrow[d,phantom,""{coordinate,name=Y}] \arrow[r] 
			& H_{d-1}(\cB_d, \D_d ;\Z) 
\arrow[dll,rounded corners,to path={ --([xshift=2ex]\tikztostart.east)|- (Y)[near end]\tikztonodes-| ([xshift=-2ex]\tikztotarget.west)-- (\tikztotarget)}] \\	
\cdots & \cdots 
			\arrow[d,phantom,""{coordinate,name=X}]
			& \cdots 
\arrow[dll,rounded corners,to path={ --([xshift=2ex]\tikztostart.east)|- (X)[near end]\tikztonodes-| ([xshift=-2ex]\tikztotarget.west)-- (\tikztotarget)}] \\	
			H_0(\D_d;\Z) \arrow[r] & H_0(\cB_d;\Z) \arrow[r] & H_0(\cB_d, \D_d;\Z) \cong 0.
		\end{tikzcd}
	\end{center}
		\caption{\small{Long exact homology sequence of the pair $(\cB_d,\D_d)$.}}
		\label{fig:long}
	\end{figure}

Basic algebraic topology \cite{Ha} tells us  that the homology of $\R\P^d \cong \cB_d$ is given by 
\begin{equation}
H_p(\R\P^d; \Z)=\begin{cases} \Z\; \text{ for } p=0 \text{ and,  when } d \equiv 1 \mod 2, \text{ for } p=d \\
                                     \Z/2\Z,\;  p \text{ odd, } 0<p<d\\
                                     0,\;\; \text{otherwise}.
 \end{cases}\label{eq:homodd}
 \end{equation}
 

\smallskip

	By \ref{lem:crucial}, (i) and the long exact sequence of the pair $(\cB_d,\D_d)$ from \ref{fig:long} we get that
	$\bar{H}_i(\D;\Z) \cong \bar{H}_i(\R\P^2;\Z)$ for $i < d-2$. 

	It remains to determine $\bar{H}_{i}(\D;\Z)$ for $i=d-1,d-2$.
	For $d$ odd, by \ref{lem:crucial} and the homology long exact sequence from \ref{fig:long}, 
	we obtain two short exact sequences. 

	\begin{align}
		\label{eq:seq1} 0 & \to & \Z           & \to \; \Z^{(d+1)/2}              & \to \; & H_{d-1}(\mathcal D_d; \Z) & \to & \; 0 & \text{\; and \;} \\
		\label{eq:seq2} 0 & \to & \Z^{(d+1)/2} & \to H_{d-2}(\mathcal D_d; \Z) & \to \; & \;\Z/2\Z & \to & \; 0.     & 
	\end{align}

	In \ref{eq:seq1}, $H_{d-1}(\mathcal{D}_d;\Z)$ is the homology of $\mathcal{D}_d$ in homological dimension 
	$\dim(\mathcal{D}_d)$ and hence torsion free. From the exactness of the sequence it then follows
	that $H_{d-1}(\mathcal{D}_d;\Z) \cong \Z^{(d+2)/2-1} = \Z^{(d-1)/2}$.

 For
 $0 \leq \ell \leq d-1$ even, the boundary of the cell labeled $((\underbrace{1,\ldots,1}_{\ell}),1)$  lies completely in $\D_d$. In the expansion
 $\gamma_\ell$ of the boundary in the cellular chain complex the cell labeled 
 $((\underbrace{1,\ldots, 1}_{\ell-1}),2)$ has 
 coefficient $-2$. As a boundary each $\gamma_\ell$ is a cycle in the
 cellular chain complex of $\D_d$. 
 Since the cell labelled $((\underbrace{1,\ldots, 1}_{\ell-1}),2)$ does not
 appear in the boundary of any cell of $\D_d$ it follows that each $\gamma_\ell$
 represents a non-zero homology class of $\D_d$. 
 Again by the fact that none of the cells labeled
  $((\underbrace{1,\ldots, 1}_{\ell-1}),2)$ 
 appear in the boundary of any cell of $\D_d$ it follows that a linear
 combination of the $\gamma_\ell$ is zero in homology if and only if all cells
 labelled $((\underbrace{1,\ldots, 1}_{\ell-1}),2)$ cancel.  
 But the cell labeled $((\underbrace{1,\ldots, 1}_{\ell-1}),2)$ appears only in 
 $\gamma_\ell$. It follows that over $\Z$ 
 the collection of all $\gamma_\ell$ forms $\frac{d+1}{2}$ linearly
 independent non-trivial homology cycles. 
 As a consequence they generate $\ZZ^{\frac{d+1}{2}}$ inside 
 $H_{d-2}(\D_d;\Z)$. 
 A simple calculation shows that in the sum 
 $\gamma_0 + \cdots + \gamma_{d-1}$ all non-zero coefficients 
 are $\pm 2$. Hence $\gamma_0 + \cdots + \gamma_{d-1}$ is
 twice a cycle $\gamma'$ in the cellular chain complex of
 $\D_d$. In $\gamma'$ each  
  cell labelled $((\underbrace{1,\ldots, 1}_{\ell-1}),2)$ has coefficient
  $-1$. 
   From the proof \ref{lem:crucial} we know that the classes
   of the cells labelled $((\underbrace{1,\ldots,1}_{\ell}),1)$ generate 
   $H_{d-1}(\cB_d,\D;\Z)$. Hence in \ref{eq:seq2} their boundaries 
   $\gamma_\ell$ generate
   the image of $\ZZ^{\frac{d+1}{2}}$ in $H_{d-2}(\D_d;\Z)$. 
   The arguments above show that $\gamma'$ does not lie in the boundary
   and hence it follows from \ref{eq:seq2} that $H_{d-2}( \D_d;\Z)
  \cong\Z^{\frac{d+2}{2}}$.

 
	For $d$ even, by \ref{lem:crucial} and the homology long exact sequence in \ref{fig:long}, 
	we obtain the exact sequence. 
	\begin{align}
		\label{eq:seq3}
0 \to \Z \to H_{d-1}(\mathcal D_d; \Z) \to \Z/2\Z \to (\Z/2\Z)^{d/2} \to H_{d-2}(\mathcal D_d; \Z) \to 0.
	\end{align}

      The copy of $\Z$ in \ref{eq:seq3} is the top homology of the pair $(\cB_d,\mathcal{D}_d)$. 
      By the proof of \ref{lem:crucial}, in our cellulation, the cell labeled $((),0)$ represents a class that
      generates this homology group.
      The image of this class, under the connecting homomorphism, is the class of its boundary, which consists  only of 
      the cell labeled by $((2),0)$. This is a top-dimensional cell of $\mathcal{D}_d$.
	Its boundary in the cellular chain complex of $\mathcal{D}_d$ is $0$. Since  the
	the cell, labeled by $((2),0)$, is a basis element of the top-dimensional cellular chain group of 
	$\mathcal{D}_d$, it follows that its class generates a $\Z$-summand of the homology group. Since this
	$\ZZ$-summand is the image $H_d(\cB_d,\mathcal{D}_d;\Z)$ under the connecting homomorphism, it follows
	from the exactness of \ref{eq:seq3} that $H_{d-1}(\mathcal{D}_d;\Z) \cong \Z$ and 
	\ref{eq:seq3} reduces to
	\begin{align*}
                   0 \to \Z/2\Z \to (\Z/2\Z)^{d/2} \to H_{d-2}(\mathcal D_d; \Z) \to 0.
	\end{align*}
	This exact sequence can be seen as an exact sequence of vector spaces. Hence the sequence splits and
        $H_{d-2}(\mathcal D_d; \Z) =  (\Z/2\Z)^{(d/2)-1}$.
%
%
\end{proof}

At the moment, 
we are unable to extend \ref{cor:homdiscr}  to the case of 
the discriminant of classes binary forms in $\cB_d$ with at least one 
root of multiplicity  $k \geq 3$. One should mention that the latter case  
does not immediately follow from Vassiliev's \ref{th:Va2}, where the case 
of actual (non-projectivized) binary forms has been settled; this is related 
to a non-trivial action of $\Z/2\Z$ on the homology, when taking the 
projectivization. To illustrate this phenomenon, consider the table in 
\ref{fig:triple}.  It shows the results of our computer calculations for 
the projectivized discriminant in case 
$k = 3$, using the cellular chain complex from \ref{th.3.3}. Note the 
appearance of the higher torsion group $\ZZ/4\ZZ$ in the table, while in 
Vassiliev's case all the torsion groups are sums of copies of  $\ZZ/2\ZZ$.  

\begin{figure}
	{\tiny
$
\begin{array}{c|c|c|c|c|c|c|c|c|c|c|c|c}
	d\; \backslash \; i & 1 & 2 & 3 & 4 & 5 & 6 & 7 & 8 & 9 & 10 & 11 & 12 \\
\hline
 &          &          &           &            &          &              &     & & & & & \\  
	3  & \ZZ      &          &           &            &          &              &     & & & & & \\  
	   &          &          &           &            &          &              &     & & & & & \\  
	   \hline
	    &          &          &           &            &          &              &     & & & & & \\  
	4  & \ZZ^2    & \ZZ      &           &            &          &              &     & & & & & \\  
	   &          &          &           &            &          &              &     & & & & & \\  
	   \hline
	    &          &          &           &            &          &              &     & & & & & \\  
	5  & \begin{array}{c} \ZZ \\ \oplus \\ \ZZ/2\ZZ \end{array}      & \ZZ/2\ZZ          &  0         &            &          &              &     & & & & & \\  
		&          &          &           &            &          &              &     & & & & & \\  
		\hline
		 &          &          &           &            &          &              &     & & & & & \\  
	6  & \ZZ/2\ZZ & 0        & \ZZ^2     & \ZZ        &          &              &     & & & & & \\  
	   &          &          &           &            &          &              &     & & & & & \\  
	   \hline
	    &          &          &           &            &          &              &     & & & & & \\  
	7  & \ZZ/2\ZZ      & 0 & \begin{array}{c} \ZZ^2 \\ \oplus \\ \ZZ/2\ZZ \end{array}        &  \begin{array}{c} \ZZ \\ \oplus \\ \ZZ/2\ZZ \end{array}        & 0           &          &              &     & & & & \\  
		&          &          &           &            &          &              &     & & & & & \\  
		\hline
		 &          &          &           &            &          &              &     & & & & & \\  
	8  & \ZZ/2\ZZ & 0        & \begin{array}{c} \ZZ \\ \oplus \\ \ZZ/2\ZZ \end{array}  & \ZZ/4\ZZ        & \ZZ         & \ZZ              &     & & & & & \\  
		&          &          &           &            &          &              &     & & & & & \\  
		\hline
		 &          &          &           &            &          &              &     & & & & & \\  
	9  & \ZZ/2\ZZ      & 0 & \ZZ/2\ZZ & 0 & \begin{array}{c} \ZZ^2 \\ \oplus \\ \ZZ/2\ZZ \end{array}         &  \begin{array}{c} \ZZ \\ \oplus \\ \ZZ/2\ZZ \end{array}         & 0           &          &              &     & & \\  
		&          &          &           &            &          &              &     & & & & & \\  
		\hline
		 &          &          &           &            &          &              &     & & & & & \\  
	10  & \ZZ/2\ZZ & 0        & \ZZ/2\ZZ  & 0         & \begin{array}{c} \ZZ^2 \\ \oplus \\ \ZZ/2\ZZ \end{array}       & \begin{array}{c} \ZZ \\ \oplus \\ \ZZ/4\ZZ \end{array}  & \ZZ    & \ZZ & & & & \\  
		&          &          &           &            &          &              &     & & & & & \\  
		\hline
		 &          &          &           &            &          &              &     & & & & & \\  
	11 & \ZZ/2\ZZ      & 0 & \ZZ/2\ZZ & 0 & \begin{array}{c} \ZZ \\ \oplus \\ \ZZ/2\ZZ \end{array}        &  \ZZ/4\ZZ         & \begin{array}{c} \ZZ\\ \oplus \\ \ZZ/2\ZZ \end{array} & \begin{array}{c} \ZZ \\ \oplus \\ \ZZ/2\ZZ \end{array} &0 & & &\\  
		&          &          &           &            &          &              &     & & & & & \\  
		\hline
		 &          &          &           &            &          &              &     & & & & & \\  
	12  & \ZZ/2\ZZ & 0        & \ZZ/2\ZZ  & 0         & \ZZ/2\ZZ & 0 & \begin{array}{c} \ZZ^2 \\ \oplus \\ \ZZ/2\ZZ\end{array}        & \begin{array}{c} \ZZ \\ \oplus \\ \ZZ/4\ZZ \end{array}  & \ZZ    & \ZZ & & \\  
		&          &          &           &            &          &              &     & & & & & \\  
		\hline
		 &          &          &           &            &          &              &     & & & & & \\  
	13  & \ZZ/2\ZZ & 0        & \ZZ/2\ZZ  & 0         & \ZZ/2\ZZ & 0 & \begin{array}{c} \ZZ^2 \\ \oplus \\ \ZZ/2\ZZ\end{array}        & \begin{array}{c} \ZZ \\ \oplus \\ \ZZ/4\ZZ \end{array}  & \begin{array}{c} \ZZ \\ \oplus \\ \ZZ/2 \ZZ \end{array}  & \begin{array}{c} \ZZ \\ \oplus \\ \ZZ/2\ZZ \end{array} & 0 & \\  
	 &          &          &           &            &          &              &     & & & & & \\  
	\hline
\end{array}
$
	}
	\caption{\small{Reduced homology groups $\{\bar H_i(\cB_d^{\,\max \om\, \geq\, 3}; \Z)\}$ of the  projectivized  space of binary forms with at least one root (i.e., a line in the plane) of multiplicity $\geq 3$. The rows of the table correspond to the fixed values of $d$.}}
	\label{fig:triple}
\end{figure}

\section[(Co)homological stabilization]{(Co)homological stabilization in the univariate and bivariate situations}\label{sec:stab}



In what follows, we either consider closed subposets  $\Theta\subset \Om^\infty$ such that,   
for all $(\omega,\kappa) \in \Theta$,  
the norms $|(\om,\kappa)|$ have the same parity, or we consider closed subposets $\Theta\subset \Om$ such that,
for all $\omega \in \Theta$, 
the norms $|\om|$ have the same parity. 
We call these posets $\Theta$ \emph{closed equal parity subposets}. We say that a closed equal parity poset $\Theta$
is of parity $d$ or equivalently $d$ is of parity $\Theta$ 
if the norm of all elements of $\Theta$ has the same parity as $d$. 
Note that posets of mixed parity do not have natural geometric interpretation. 

\smallskip
We  say that $\Theta$ is  \emph{generated by  a subset $\Theta' \subseteq \Theta$}  if 
$\Theta$ is the smallest closed subposet of either $\Om^\infty$ or $\Om$ that contains $\Theta'$. If $\Theta'$ can be chosen to be finite, then 
we say that $\Theta$ is \emph{ finitely generated}. 
It is easily seen that, for any $\Theta' \subseteq \Om^\infty$ or $\Theta' \subseteq \Om$,  
there exists a unique smallest closed subposet $\Theta$ that contains $\Theta'$. Moreover, if $\Theta$ is generated by $\Theta'$, then one can 
always assume that $\Theta'$ consists of maximal elements of $\Theta$ with respect to ``$\prec$". 
In particular, being finitely generated is equivalent to  having only finitely many maximal elements.

If $\Theta$ is a closed equal parity poset, then we denote  
$\Theta_{\langle d]}:=\Theta\cap  \Om_{\langle d]}^\infty$ (resp.,
$\Theta_{\langle d]}:=\Theta\cap  \Om_{\langle d]}$).
We also introduce $\Theta_d := \Theta \cap \Om_d^\infty$ (resp.,  $\Theta_d :\Theta \cap \Om_d$). (These constructions can only lead to non-empty posets if $\Theta$ is not of parity $d$.

\begin{definition}\label{def.profinite}
We call a closed poset $\Theta\subseteq \Om$ (resp., $\Om^\infty$)  {\sf profinite} if, for all  integers $q \geq 0$, there exist only finitely many elements 
$(\om,\kappa) \in \Theta$ (resp. $\om \in \Theta$)) such that $|(\om,\kappa)|' \leq q$ (resp., $|\om|' \leq q$).   \hfill $\diamondsuit$
\end{definition}
Obviously, every finite $\Theta$ is profinite. In particular, for any closed $\Theta \subseteq \Om$ (resp., $\Theta \subseteq \Om^\infty$), we have that 
$\Theta_{\langle d]} = \Theta \cap \Om_{\langle d]}$ (resp., $\Theta_{\langle d]} = \Theta \cap \Om_{\langle d]}^\infty$) is finite and hence profinite. 

\smallskip

\begin{lemma}\label{lem.virtually_finite}
  Let $\Theta$ be a closed finitely generated  equal parity subposet of 
  $\Om^\infty$ (resp., of $\Om$). 
 Let $n$ be the largest norm of the maximal elements in $\Theta$.
  
 Then, for any $q \geq 0$, we have  inclusions $$\{ (\om,\kappa) \in \Theta~|~|(\om,\kappa)|'\leq q \} \subseteq  \Theta_{\langle n+2q]} \text{\; and \;} \{ \om \in \Theta~|~|\om|'\leq q \} \subseteq  \Theta_{\langle n+2q]}.$$ 
 \indent In particular, every closed finitely generated  equal parity subposet is profinite. 
\end{lemma}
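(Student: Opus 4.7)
The plan is to reduce the statement to short bookkeeping on how the norm $|\cdot|$ and the reduced norm $|\cdot|'$ evolve under the merge and insert operations. Directly from the definitions \ref{eq2.1a}, \ref{eq2.2a} and their marked versions $\sM_j^\infty, \sI_j^\infty$, one checks two elementary facts: every merge or insert operation increases $|\cdot|'$ by exactly $1$; and the merges preserve $|\cdot|$ while the inserts increase it by $2$. In particular all of these operations preserve the parity of the norm, which is consistent with the equal parity hypothesis on $\Theta$.

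Next, I would exploit the assumption that $\Theta$ is closed and finitely generated to observe that its maximal elements form a nonempty finite subset of any finite generating set $\Theta'$, and that every element of $\Theta$ lies $\prec$-below some maximal element (any ascending chain in a finite generating set terminates). Thus the integer $n$ from the statement is finite and, since all elements of $\Theta$ share the same parity of norm, $n$ has the common parity of the poset.

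The core of the argument: fix $(\om,\kappa) \in \Theta$ with $|(\om,\kappa)|' \le q$ and choose a maximal $(\om^\ast,\kappa^\ast) \in \Theta$ above it; write a $\prec$-chain from $(\om^\ast,\kappa^\ast)$ down to $(\om,\kappa)$ using $m$ merges and $i$ inserts. The two facts above give
\[
|(\om,\kappa)|' = |(\om^\ast,\kappa^\ast)|' + (m+i), \qquad |(\om,\kappa)| = |(\om^\ast,\kappa^\ast)| + 2i.
\]
From $|(\om^\ast,\kappa^\ast)|' \ge 0$ and $|(\om,\kappa)|' \le q$ we get $i \le m+i \le q$; combining with $|(\om^\ast,\kappa^\ast)| \le n$ yields $|(\om,\kappa)| \le n + 2q$. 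The parity of $|(\om,\kappa)|$ agrees with that of $n$, hence with that of $n+2q$, so $(\om,\kappa) \in \Om_{\langle n+2q]}^\infty$, and therefore $(\om,\kappa) \in \Theta_{\langle n+2q]}$. The unmarked case is identical with $\sM_j, \sI_j$ replacing $\sM_j^\infty, \sI_j^\infty$.

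The ``in particular'' assertion is then immediate: $\Om_{\langle n+2q]}^\infty$ (resp.\ $\Om_{\langle n+2q]}$) is finite for every $q$, so the displayed inclusion forces $\{(\om,\kappa)\in\Theta : |(\om,\kappa)|'\le q\}$ to be finite for every $q$, i.e., $\Theta$ is profinite. I do not foresee any real obstacle; the only detail that must be handled carefully is the parity bookkeeping which guarantees $|(\om,\kappa)| \equiv n+2q \pmod 2$, so that the upper bound actually lands in the index set $\langle n+2q]$ and the inclusion statement is nontrivial.
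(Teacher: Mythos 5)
Your proof is correct and follows essentially the same approach as the paper: both arguments bound the number of insert operations along a chain from a maximal element by $q$ (via the fact that merges and inserts each raise the reduced norm by one), then use that merges preserve the norm while inserts raise it by two. Your version is slightly more explicit in introducing separate variables $(\om^\ast,\kappa^\ast)$, $m$, and $i$, whereas the paper reuses $(\om,\kappa)$ for the maximal element, but the content is the same.
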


\begin{proof}
  Consider the case $\Theta \subseteq \Om^\infty$ and assume that $(\omega,\kappa)$ is one of its maximal elements.
  Let $(\om,\kappa)$ be such that $|(\om,\kappa)|' \leq q$.
  By definition any $(\omega',\kappa') \preceq (\omega,\kappa)$ can be 
  obtained from $(\om,\kappa)$ by a sequence of merge and insert operations. 
  Both operations increase $|\sim |'$ by one. In particular,  
	at most $q-|(\om,\kappa)|' \geq 0$ insertion operations
  can be applied to $(\omega,\kappa)$ without $|\sim |'$ exceeding $q$. Each merge operation preserves the norm and each insertion 
  operation increases the norm by $2$. 
  In particular,  if $(\om',\kappa') \preceq (\om,\kappa)$ and $|(\om',\kappa)|'  \leq  q$ then 
  $$|(\om',\kappa')|\leq |(\om,\kappa)|+ 2(q-|(\om,\kappa)|') = 
  |(\om,\kappa)|-2|(\om,\kappa)|'+2q \leq n+2q.$$
  The proof in case $\Theta \subseteq \Om$ is analogous. 

  The last assertion now follows from the fact that 
  $\Theta_{\langle n+2q]}$ is finite.
\end{proof}

Let $\Theta \subseteq \Om^\infty$ (resp., $\Theta \subseteq \Om$) be a closed equal parity subposet of parity $d$. 
Since each $\Theta_{\langle d]}$ is finite, it has 
only finitely many maximal elements. Let $n_1,\ldots,n_{\ell(d)}$ be the norms of the maximal elements in $\Theta_{\langle d]}$ and 
$c_1,\ldots, c_{\ell(d)}$ be  the corresponding reduced norms. 
Now we define for $d$ of parity $\Theta$ 
\begin{eqnarray}\label{eq.eta} 
\eta_\Theta(d)  : = \max_{i\,=\,1,\,\ldots \, , \,\ell(d)} (n_i -2c_i).
\end{eqnarray} 

Note that, if $\Theta$ is finitely generated, then $\eta_\Theta(d)$ is constant for all $d$ greater than or equal to the largest norm of a maximal element in $\Theta$.
In general, if $(\om,\kappa)$ is maximal in $\Theta_{\langle d]}$, then it
is also maximal in $\Theta_{\langle \ell]}$ for any $\ell \geq d$ of parity $\Theta$. This implies that 
$\eta_\Theta$ is a \emph{weakly increasing} function on arguments of parity $\Theta$. 

Further, using \ref{eq.eta}, for any number $d$ of the parity $\Theta$, we define  the function
\begin{eqnarray}\label{eq.function_psi} 
\psi_\Theta(d) := \frac{1}{2}(d + \eta_\Theta(d)) =\frac{1}{2}( d+ \max_{i\,=\, 1,\,\ldots \, ,\,\ell(d)} \{n_i -2c_i\}).
\end{eqnarray}
 

Since the function $\eta_\Theta(d)$ is weakly increasing as a function for arguments $d$ of parity $\Theta$, 
it follows that $\psi_\Theta$ is strictly increasing. 
In particular, we get that $\lim_{d \rightarrow \infty} 
\psi_\Theta(d) = \infty$, the limit being taken over $d$ of parity $\Theta$.
Nevertheless, the growth of $\psi_\Theta$ depends on whether $\Theta$ is a finite, a profinite, or a general subposet.

\smallskip

\begin{lemma}
	\label{lem:profinite}
	Let $\Theta \subseteq \Om^\infty$ (resp., $\Om$) be a closed profinite equal parity poset. 
	Then $\lim_{d \to +\infty} (d-\psi_\Theta(d)) = +\infty$. 
\end{lemma}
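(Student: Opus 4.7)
The plan is to argue by contradiction, using the key identity $d - \psi_\Theta(d) = \tfrac{1}{2}(d - \eta_\Theta(d))$, which reduces the claim to showing $d - \eta_\Theta(d) \to \infty$ as $d \to \infty$ along arguments of parity $\Theta$. Suppose, to the contrary, that there is a constant $C \geq 0$ and an infinite sequence of values $d$ of parity $\Theta$ such that $d - \eta_\Theta(d) \leq C$. For each such $d$, choose a maximal element $(\om_d,\kappa_d)$ of $\Theta_{\langle d]}$ (or $\om_d \in \Theta_{\langle d]}$ in the unmarked case) realizing the maximum in \ref{eq.eta}, with norm $n_d$ and reduced norm $c_d$. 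Then
\[
d - n_d + 2c_d \;=\; d - (n_d - 2c_d) \;\leq\; d - \eta_\Theta(d) \;\leq\; C.
\]

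Next I would extract the crucial bound on the reduced norms from this inequality. Since $n_d \leq d$ (as $(\om_d,\kappa_d) \in \Theta_{\langle d]}$), rearranging gives $2c_d \leq C - (d - n_d) \leq C$, hence $c_d \leq C/2$. Thus every element chosen from our infinite family has reduced norm at most $\lfloor C/2 \rfloor$. Now the profiniteness hypothesis on $\Theta$ (\ref{def.profinite}) guarantees that the set
\[
S \;:=\; \bigl\{(\om,\kappa) \in \Theta \;\big|\; |(\om,\kappa)|' \leq \lfloor C/2 \rfloor \bigr\}
\]
is \emph{finite}, so in particular the norms of its elements are bounded by some integer $N$. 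Hence $n_d \leq N$ for all $d$ in our infinite family.

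Combining this with the inequality $d - n_d + 2c_d \leq C$ and $c_d \geq 0$ gives $d \leq n_d + C \leq N + C$, which bounds our infinite family of $d$'s by a constant --- a contradiction. This forces $d - \eta_\Theta(d) \to \infty$, and therefore $d - \psi_\Theta(d) \to \infty$, as desired.

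The argument is essentially a pigeonhole after unwinding the definitions; the only conceptual step is recognizing that the condition ``$d - \eta_\Theta(d)$ bounded'' simultaneously forces $n_d$ close to $d$ and $c_d$ small, so that profiniteness (finiteness of elements of bounded reduced norm) directly caps the norms and hence caps $d$ itself. I do not foresee any real obstacle; the only subtlety is to be careful that the maximal element realizing $\eta_\Theta(d)$ lies in $\Theta_{\langle d]}$ (not all of $\Theta$), which is exactly how the inequality $n_d \leq d$ is obtained and without which the reduction to profiniteness would fail.
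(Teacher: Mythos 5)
Your proof is correct and uses essentially the same mechanism as the paper: profiniteness converts the bound on the reduced norm $c_d$ of the max-realizing element of $\Theta_{\langle d]}$ (forced by boundedness of $d - \psi_\Theta(d) = \tfrac12(d-\eta_\Theta(d))$) into a bound on its norm $n_d$, which then caps $d$ and yields a contradiction. Your version is in fact a bit tighter than the paper's, which first splits into the finitely generated and non--finitely generated cases and then concludes somewhat tersely from $\lim_i c_i = \infty$ without spelling out why the index realizing $\eta_\Theta(d)$ must have diverging reduced norm; your uniform contradiction argument makes that pigeonholing explicit.
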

\begin{proof}
	If $\Theta$ is finitely generated, then $\eta_\Theta(d) = c' \in \Z$ is constant for $d \gg 0$. 
	Elementary manipulations validate the claim.

	Assume $\Theta$ is not finitely generated and let $\{n_i\}_{i \in \N}, \{c_i\}_{i \in \N}$ be the sequences of 
	norms and the corresponding reduced norms of the maximal elements of $\Theta$ ordered such that the norms are weakly
	increasing. 
	Since $\Theta$ is profinite, for any $q \geq 0$ there are only finitely many $c_i$ with $c_i \leq q$. It foollows
	that $\lim_{i \rightarrow \infty} c_i = \infty$.  

	Thus, for $d \gg 0$, we have  
	$\psi_\Theta(d) = \frac{1}{2} (d+n_i-2c_i)$ for some $i$ such that $n_i \leq d$. Note that $i$ may depend on $d$. 

	It follows that $d-\psi_\Theta(d) = \frac{1}{2} d - \frac{1}{2} (n_i -2c_i) \geq
	2c_i$. 
	The claim now follows from the property $\lim_{i \rightarrow \infty} c_i = \infty$.
\end{proof}

\begin{example}
	\label{ex:profinite}
	Let $\ell \geq 1$ be a number and 
	$\Theta(\ell) \subseteq \Om^\infty$ be generated by 
	the multiplicity patterns 
	$\big\{\gamma_k = \big((\underbrace{3,\ldots,3}_{2\,k},\underbrace{1,\ldots,1}_{2\,\ell\,k}),0\big)\big\}_{k \geq 1}$.
	Then $|\gamma_k| = k\,(6+2\,\ell)$ and $|\gamma_k|' = 4\,k$. 
	It follows that $\Theta(\ell)$ 
	is profinite, but not finitely generated.
	If for some $k$ we have $d = k\,(6+2\,\ell)$, then 
	 $\eta_{\Theta(\ell)}(d) = \max_{k' \leq k} \{ 2\,k'\,(\ell-2)\} \leq \frac{1}{3}d$
	and $\psi_{\Theta(\ell)} (d) \leq \frac{5}{6} d$.
	It follows that $d-\psi_{\Theta(\ell)}(d)
	\geq \frac{1}{6} d \rightarrow +\infty$. \hfill $\diamondsuit$
\end{example}

We now show that the function $\psi_\Theta$ plays a fundamental role in the homological stabilization of spaces $\cB_d^\Theta$ and $\bar{\cP}_d^\Theta$. \smallskip

%





Let $\Theta$ be a closed equal parity poset in either $\Om$ or $\Om^\infty$. 
For a number $d$ of the parity $\Theta$, we introduce the homomorphism
$$\mathsf{trunc} : \Z[\Theta_{\langle d+2]}] \to \Z[\Theta_{\langle d]}]$$ 
that is the identity on the elements of $\Theta_{\langle d+2]}$ of norm 
$\leq d$ and sends all elements of norm $d+2$ to $0$.
A simple calculation shows that 
$\mathsf{trunc}$ commutes with the differentials $\d^\infty$ (resp., $\d$) 
and thus defines a homomorphism of differential complexes 
$(\Z[\Theta_{\langle d+2]}], \d^\infty)$ and 
$(\Z[\Theta_{\langle d]}], \d^\infty)$ 
(resp., $(\Z[\Theta_{\langle d+2]}], \d)$ and $(\Z[\Theta_{\langle d]}], \d)$). 
It also lowers the homological degree by $2$. Note 
that in the univariate case (i.e. for $\Theta \subset \Om$), the strata that 
correspond to compositions in  $\Om_{\langle d+2]}$ with norm $d+2$ are 
exactly the cells that consists of  real polynomials whose roots are all real. 


\begin{proposition}\label{lem13.8x}
  Let $\Theta$ be a closed equal parity poset in $\Om$ or $\Om^\infty$
	and $d \geq 0$ a number of parity $\Theta$. 
  Then 
  \begin{itemize}
	  \item[(i)] the operation $\mathsf{trunc}$ generates the short exact sequences 
	    of differential complexes
$$0\to (\Z[\Theta_{d+2}], \d_\mathsf M) \to  (\Z[\Theta_{\langle d+2]}], \d) \stackrel{\mathsf{trunc}}{\longrightarrow} (\Z[\Theta_{\langle d]}], \d) \to 0,$$
\quad \quad \quad \quad \quad \quad \quad \quad \quad \quad \quad \quad \quad \quad  \quad or
$$ 0\to (\Z[\Theta_{d+2}], \d_\mathsf M^\infty) \to  (\Z[\Theta_{\langle d+2]}], \d^\infty) \stackrel{\mathsf{trunc}}{\longrightarrow} (\Z[\Theta_{\langle d]}], \d^\infty) \to 0,$$

\item[(ii)] if, for some $k$, the complex $(\Z[\Theta_{d+2}], \d_\mathsf M)$ or 
      the complex $(\Z[\Theta_{d+2}], \d_\mathsf M^\infty)$ is acyclic in 
      dimensions $\geq k$, then for all $j > k$, 
      the homomorphism $\mathsf{trunc}$ induces a homological isomorphisms  
      $$H_{j+2}(\Z[\Theta_{\langle d+2]}], \d) \stackrel{\mathsf{trunc}_\ast}{\longrightarrow} H_j(\Z[\Theta_{\langle d]}], \d),\quad \text{or}\quad  H_{j+2}(\Z[\Theta_{\langle d+2]}], \d^\infty) \stackrel{\mathsf{trunc}_\ast}{\longrightarrow} H_j(\Z[\Theta_{\langle d]}], \d^\infty).$$

 \smallskip
\end{itemize}
\end{proposition}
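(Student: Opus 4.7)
The plan is to write out only the marked case ($\Theta \subseteq \Om^\infty$) since the argument in the unmarked case is verbatim identical. For part (i), the crucial observation is that insert operations $\sI^\infty_k$ raise the norm $|(\om,\kappa)|$ by $2$, while merge operations $\sM^\infty_k$ preserve it. Two consequences follow. First, for any $(\om,\kappa) \in \Theta_{d+2}$, each insert $\sI^\infty_k(\om,\kappa)$ has norm $d+4$ and hence lies outside $\Theta_{\langle d+2]}$; this is exactly the top-norm case of the defining formula \ref{eq13.7}, which drops all insert contributions. Therefore the restriction of $\d^\infty$ to the subgroup $\Z[\Theta_{d+2}] \subseteq \Z[\Theta_{\langle d+2]}]$ coincides with $\d^\infty_\sM$, and the inclusion is a chain map. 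Second, for $(\om,\kappa) \in \Theta_{\langle d]}$ with $|(\om,\kappa)| \le d-2$, both merge and insert contributions already lie in $\Theta_{\langle d]}$ and are preserved by $\mathsf{trunc}$, while for $|(\om,\kappa)| = d$ the inserts land in $\Theta_{d+2} = \ker(\mathsf{trunc})$ and are therefore deleted. This matches precisely the top-norm clause of $\d^\infty$ on $\Z[\Theta_{\langle d]}]$, so $\mathsf{trunc}$ commutes with the differentials. Surjectivity of $\mathsf{trunc}$ and the identity $\ker(\mathsf{trunc}) = \Z[\Theta_{d+2}]$ are immediate from the definition, which yields the desired short exact sequence.

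Part (ii) will follow from the associated long exact sequence in homology, provided the grading is handled carefully. Grading all three complexes by reduced norm $r := |\sim|'$ (so the differentials raise $r$ by $1$), the short exact sequence from (i) is degree-preserving and induces a long exact sequence
$$\cdots \to H^r(\Z[\Theta_{d+2}],\d^\infty_\sM) \to H^r(\Z[\Theta_{\langle d+2]}],\d^\infty) \xrightarrow{\mathsf{trunc}_\ast} H^r(\Z[\Theta_{\langle d]}],\d^\infty) \to H^{r+1}(\Z[\Theta_{d+2}],\d^\infty_\sM) \to \cdots.$$
Under the cellular identification of \ref{th.3.3}, a class of reduced norm $r$ represents topological dimension $d-r$ in $\cB_d^\Theta$ but dimension $d+2-r$ in $\cB_{d+2}^\Theta$ (and likewise in $\Z[\Theta_{d+2}]$, viewed as a subcomplex of the cellular chain complex of $\cB_{d+2}^\Theta$). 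Setting $r = d-j$ translates the sequence into
$$\cdots \to H_{j+2}(\Z[\Theta_{d+2}],\d_\sM^\infty) \to H_{j+2}(\Z[\Theta_{\langle d+2]}],\d^\infty) \xrightarrow{\mathsf{trunc}_\ast} H_j(\Z[\Theta_{\langle d]}],\d^\infty) \to H_{j+1}(\Z[\Theta_{d+2}],\d_\sM^\infty) \to \cdots.$$
If $(\Z[\Theta_{d+2}],\d_\sM^\infty)$ is acyclic in all dimensions $\ge k$, then for every $j > k$ both flanking groups $H_{j+2}$ and $H_{j+1}$ of the merge complex vanish, and $\mathsf{trunc}_\ast$ is forced to be an isomorphism.

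The only delicate point in the argument is the consistent bookkeeping of the degree shift: the map $\mathsf{trunc}$ is the identity on generators and preserves reduced norm, yet it effectively shifts topological dimension by $-2$ because the ambient spaces $\cB_{d+2}$ and $\cB_d$ differ by two in dimension. The same shift accounts for why the merge complex must be acyclic \emph{two} degrees above $j$ (namely in dimensions $j+1$ and $j+2$) for the isomorphism to hold in dimension $j$. All remaining verifications reduce to the identities $|\sI^\infty_k(\om,\kappa)| = |(\om,\kappa)| + 2$ and $|\sM^\infty_k(\om,\kappa)| = |(\om,\kappa)|$ combined with the case split of \ref{eq13.7}, and are entirely mechanical.
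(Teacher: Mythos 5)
Your proof is correct and follows essentially the same route as the paper's: observe that on $\Z[\Theta_{d+2}]$ the differential reduces to $\d_\sM^\infty$ because inserts exceed the norm bound $d+2$, note the short exact sequence is immediate from the definition of $\mathsf{trunc}$, and extract (ii) from the long exact sequence in homology. You simply spell out the grading/degree-shift bookkeeping that the paper leaves implicit.
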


\begin{proof} 
  First note that on $\ZZ[\Theta_{d+2}]$ the differential $\d$ (resp., $\d^\infty$) coincides with $\d_\sM$ (resp., $\d_{\sM}^\infty$). 
  Now the fact that the two three-term sequences of differential complexes 
	from (i) are exact follows instantly from definition of 
	$\mathsf{trunc}$.
	The long exact sequences in their homology, induced 
	by the short exact sequences of differential complexes, 
	then imply (ii).
\end{proof}

In \ref{simple_stabilization}, we will find out which posets $\Theta$ and 
integers $k$ satisfy the hypotheses of \ref{lem13.8x}(ii). 
Prior to that, we need to investigate the geometry that leads to  
the (co)homological stabilization for the spaces 
$\cB_d^\Theta$, $\cB_d^{\mathbf c\Theta}$ and 
$\bar{\cP}_d^\Theta$, $\cP_d^{\mathbf c\Theta}$ when $d$  increases. 

\smallskip

For each $(\om,\kappa) \in \Om^\infty_{\langle d]}$, consider the set ${\mathcal K}_d^{(\om,\kappa)} \subseteq 
\cB_d$ of classes of binary forms 
of degree $d$ with \emph{all zeros being real} with root multiplicity pattern 
$(\om',\kappa')\preceq (\om,\kappa)$. More precisely, we set
\begin{eqnarray*}\label{eq.K} 
  {\mathcal K}_{d}^{(\om,\kappa)} := \bigcup_{\genfrac{}{}{0pt}{}{(\om',\kappa')\, \preceq \,(\om,\kappa)}{|(\om,\kappa)| \, = \, d}} \sR_{d}^{(\om',\kappa')}. 
\end{eqnarray*}

Analogously, for a closed equal parity poset $\Theta \subseteq \Om^\infty_{\langle d]}$,  we define 
\begin{eqnarray*}\label{eq.KTheta} 
  {\mathcal K}_{d}^\Theta := \bigcup_{(\om,\kappa) \in \Theta} {\mathcal K}_d^{(\om,\kappa)}. 
\end{eqnarray*}

\begin{figure}[ht]
\begin{center}
\centerline{\includegraphics[height=1.7in,width=4.7in]{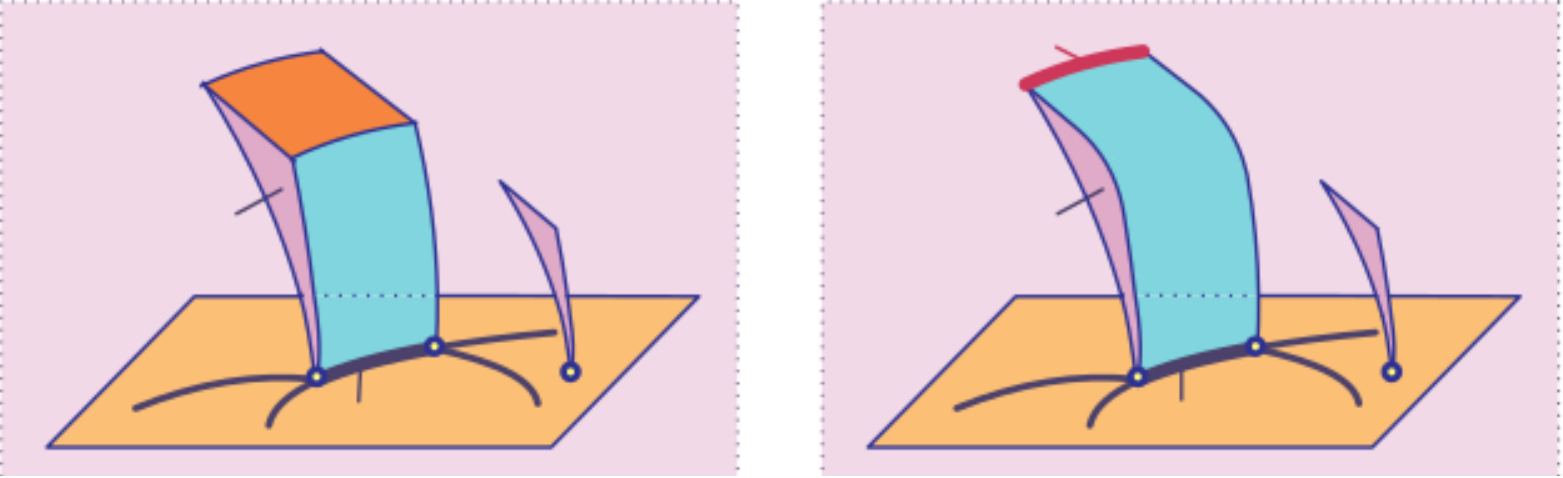}}
\end{center}
~\vskip-5.6cm
\setlength{\unitlength}{3947sp}%
\begingroup\makeatletter\ifx\SetFigFont\undefined%
\gdef\SetFigFont#1#2#3#4#5{%
  \reset@font\fontsize{#1}{#2pt}%
  \fontfamily{#3}\fontseries{#4}\fontshape{#5}%
  \selectfont}%
\fi\endgroup%
\begin{picture}(7962,2829)(1168,-3001)
\put(3200,-2600){\makebox(0,0)[lb]{\smash{{\SetFigFont{12}{14.4}{\rmdefault}{\mddefault}{\updefault}{\color[rgb]{0,0,0}(a)}%
}}}}
\put(6200,-2600){\makebox(0,0)[lb]{\smash{{\SetFigFont{12}{14.4}{\rmdefault}{\mddefault}{\updefault}{\color[rgb]{0,0,0}(b)}%
}}}}
\put(3350,-2200){\makebox(0,0)[lb]{\smash{{\SetFigFont{12}{14.4}{\rmdefault}{\mddefault}{\updefault}{\color[rgb]{0,0,0}$\sR_d^{(\om,\kappa)}$}%
}}}}
\put(6350,-2200){\makebox(0,0)[lb]{\smash{{\SetFigFont{12}{14.4}{\rmdefault}{\mddefault}{\updefault}{\color[rgb]{0,0,0}$\sR_d^{(\om,\kappa)}$}%
}}}}
\put(2840,-1800){\makebox(0,0)[lb]{\smash{{\SetFigFont{12}{14.4}{\rmdefault}{\mddefault}{\updefault}{\color[rgb]{0,0,0}$\R\P^d$}%
}}}}
\put(5800,-1800){\makebox(0,0)[lb]{\smash{{\SetFigFont{12}{14.4}{\rmdefault}{\mddefault}{\updefault}{\color[rgb]{0,0,0}$\R\P^d$}%
}}}}
\put(4200,-0600){\makebox(0,0)[lb]{\smash{{\SetFigFont{12}{14.4}{\rmdefault}{\mddefault}{\updefault}{\color[rgb]{0,0,0}$\R\P^{d+2}$}%
}}}}
\put(7200,-0600){\makebox(0,0)[lb]{\smash{{\SetFigFont{12}{14.4}{\rmdefault}{\mddefault}{\updefault}{\color[rgb]{0,0,0}$\R\P^{d+2}$}%
}}}}
\put(4100,-1100){\makebox(0,0)[lb]{\smash{{\SetFigFont{12}{14.4}{\rmdefault}{\mddefault}{\updefault}{\color[rgb]{0,0,0}$\nu^2$}%
}}}}
\put(7050,-1100){\makebox(0,0)[lb]{\smash{{\SetFigFont{12}{14.4}{\rmdefault}{\mddefault}{\updefault}{\color[rgb]{0,0,0}$\nu^2$}%
}}}}
\put(2570,-1300){\makebox(0,0)[lb]{\smash{{\SetFigFont{12}{14.4}{\rmdefault}{\mddefault}{\updefault}{\color[rgb]{0,0,0}$\sR_{d+2}^{(\om,\kappa)}$}%
}}}}
\put(5530,-1300){\makebox(0,0)[lb]{\smash{{\SetFigFont{12}{14.4}{\rmdefault}{\mddefault}{\updefault}{\color[rgb]{0,0,0}$\sR_{d+2}^{(\om,\kappa)}$}%
}}}}
	\put(3080,-0780){\makebox(0,0)[lb]{\smash{{\SetFigFont{12}{14.4}{\rmdefault}{\mddefault}{\updefault}{\color[rgb]{0,0,0}{\small $\mathcal{K}^{(\om,\kappa)}_{d+2}$}}%
}}}}
\put(5520,-0500){\makebox(0,0)[lb]{\smash{{\SetFigFont{12}{14.4}{\rmdefault}{\mddefault}{\updefault}{\color[rgb]{0,0,0}${\small \mathcal{K}^{(\om,\kappa)}_{d+2}}$}%
}}}}
\end{picture}

\vskip-0.3cm
\caption{\small{A cell $\sR_{d}^{(\om,\kappa)}$, $|(\om,\kappa)| \leq d$, in $\cB_d \cong \R\P^d$ and the corresponding cell $\sR_{d+2}^{(\om,\kappa)}$ in $\cB_{d+2} \cong \R\P^{d+2}$. The set ${\mathcal K}_{d}^\Theta$ is shown as a graph in the plane $\R\P^d$. The normal $2$-bundle $\nu(\R\P^d, \R\P^{d+2})$ is denoted by $\nu^2$.
	In (a) the portion ${\mathcal K}_{d+2}^{(\om,\kappa)}$ of the boundary $\d\sR_{d+2}^{(\om,\kappa)}$ has dimension $\dim(\sR_{d+2}^{(\om,\kappa)}) -1$, while (b) ${\mathcal K}_{d+2}^{(\om,\kappa)}$ has dimension smaller than $\dim(\sR_{d+2}^{(\om,\kappa)}) -1$.}} 
\label{fig.cells_d->d+2}
\end{figure}

The main idea of our arguments in the next three lemmas is captured by  \ref{fig.cells_d->d+2}. Here is their sketch. 
Any cell $\sR_d^{(\om,\kappa)}$ in $\mathcal B_d^\Theta$ produces a unique cell $\sR_{d+2}^{(\om,\kappa)}$ in 
$\mathcal B_{d+2}^\Theta$. The portion of the boundary $\d\sR_{d+2}^{(\om,\kappa)}$ that does not intersect 
$\mathcal B_d$ is exactly the locus ${\mathcal K}_{d+2}^{(\om,\kappa)}$. In the case of \ref{fig.cells_d->d+2}(a), that 
portion has the dimension $\dim(\sR_{d+2}^{(\om,\kappa)}) -1$ and contributes a term to the boundary operator 
of $\sR_{d+2}^{(\om,\kappa)}$ in the cellular chain complex. In case \ref{fig.cells_d->d+2} (b) 
the portion ${\mathcal K}_{d+2}^{(\om,\kappa)}$ has dimension strictly smaller than $\dim(\sR_{d+2}^{(\om,\kappa)}) -1$ and 
does not contribute a term. The function 
$\psi_\Theta(d+2)$ (see \ref{eq.function_psi}) gives the dimension of ${\mathcal K}_{d+2}^{(\om,\kappa)}$ and thus helps to discriminate 
between the ''bad case'' (a) and the ``good case'' (b). 
If $\dim(\sR_{d+2}^{(\om,\kappa)}) - 1  = \dim(\sR_{d}^{(\om,\kappa)}) + 1 >  \psi_\Theta(d+2)$, we are dealing with the good case (b) in 
which the algebraic boundary of $\sR_{d+2}^{(\om,\kappa)}$ is faithfully represented by the algebraic boundary of $\sR_d^{(\om,\kappa)}$. 
 
\begin{lemma}\label{lem.dim_K} ~ 
  \begin{itemize}
	  \item[(i)] For $(\om,\kappa) \in \Om^\infty_{\langle d]}$ or $\om \in \Om$, 
    we have 
    \begin{eqnarray*}
      \dim({\mathcal K}_{d}^{(\om,\kappa)}) =  \frac{1}{2}\Big(\,d + \big(\,|(\om,\kappa)| - 2|(\om,\kappa)|'\,\big)\,\Big)
     \end{eqnarray*}
		  or 
    \begin{eqnarray*}
      \dim({\mathcal K}_{d}^{\om}) =  \frac{1}{2}\Big(\,d + \big(\,|\om| - 2|\om|'\,\big)\,\Big).
     \end{eqnarray*}
			  
      \item[(ii)] For $d \geq 0$ and a closed equal $d$-parity subposet $\Theta \subseteq \Om^\infty_{\langle d]}$ or
		  $\Theta \subseteq \Om$, we have
    \begin{eqnarray*}\label{dim KTheta} 
      \dim({\mathcal K}_{d}^{\Theta}) =  \psi_\Theta(d).
     \end{eqnarray*}
  \end{itemize}
\end{lemma}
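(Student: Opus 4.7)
The plan is to exploit the simple combinatorial behavior of the merge and insert operations. First I would verify directly from the definitions in \ref{eq2.1a} and \ref{eq2.2a}, together with their marked analogues $\sM_j^\infty$ and $\sI_j^\infty$, that every such operation increases $|\sim|'$ by exactly $1$; moreover, each merge operation preserves $|\sim|$ while each insert operation increases $|\sim|$ by $2$. Consequently, if $(\om',\kappa') \prec (\om,\kappa)$ is obtained by applying $a$ insert and $b$ merge operations in some order, then $|(\om',\kappa')| = |(\om,\kappa)| + 2a$ and $|(\om',\kappa')|' = |(\om,\kappa)|' + a + b$. This bookkeeping is the engine of both parts.

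For part (i), by \ref{cor:bd} we have $\dim \sR_d^{(\om',\kappa')} = d - |(\om',\kappa')|'$, so $\dim \mathcal K_d^{(\om,\kappa)}$ equals $d$ minus the minimum of $|(\om',\kappa')|'$ taken over $(\om',\kappa') \prec (\om,\kappa)$ with $|(\om',\kappa')| = d$. The constraint $|(\om',\kappa')| = d$ forces $a = \tfrac{1}{2}(d-|(\om,\kappa)|)$, a non-negative integer thanks to $(\om,\kappa) \in \Om^\infty_{\langle d]}$. Thus $|(\om',\kappa')|' \geq |(\om,\kappa)|' + \tfrac{1}{2}(d-|(\om,\kappa)|)$, with equality when $b=0$, which is realized for instance by iteratively inserting copies of the part $2$. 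Rearranging yields the stated formula, and the univariate case (set $\kappa = 0$ and use the unmarked operations) goes through verbatim.

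For part (ii), $\mathcal K_d^\Theta$ is the union of the $\mathcal K_d^{(\om,\kappa)}$ over $(\om,\kappa) \in \Theta_{\langle d]}$, so $\dim \mathcal K_d^\Theta = \max_{(\om,\kappa) \in \Theta_{\langle d]}} \dim \mathcal K_d^{(\om,\kappa)}$. By (i) this amounts to maximizing $Q(\om,\kappa) := |(\om,\kappa)| - 2|(\om,\kappa)|'$ over $\Theta_{\langle d]}$. The plan is to show that the maximum is attained at a maximal element $(\hat\om_i,\hat\kappa_i)$ of $\Theta_{\langle d]}$: if $(\om,\kappa) \prec (\hat\om_i,\hat\kappa_i)$ is obtained via $a$ inserts and $b$ merges, the same bookkeeping gives $Q(\om,\kappa) = Q(\hat\om_i,\hat\kappa_i) - 2b \leq n_i - 2c_i$. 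Maximizing over $i$ then yields $\eta_\Theta(d)$, and the desired identity $\dim \mathcal K_d^\Theta = \tfrac{1}{2}(d + \eta_\Theta(d)) = \psi_\Theta(d)$ follows. I do not anticipate a genuine obstacle; the only item to be careful about is parity/integrality bookkeeping, in particular that $\tfrac{1}{2}(d-|(\om,\kappa)|)$ is a non-negative integer for every relevant $(\om,\kappa)$, which is built into the equal-parity hypothesis on $\Theta$ and the definition of $\Om^\infty_{\langle d]}$.
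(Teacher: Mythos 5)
Your proof is correct and takes essentially the same approach as the paper's: both reduce part (i) to minimizing the reduced norm over $(\om',\kappa') \preceq (\om,\kappa)$ of norm exactly $d$ and realize the minimizer by applying $\tfrac{1}{2}(d-|(\om,\kappa)|)$ insert operations, and both obtain (ii) by passing to the maximal elements of $\Theta_{\langle d]}$. You merely make explicit the bookkeeping ($a$ inserts forced by the norm constraint, merges only raising the reduced norm) that the paper's ``does the job'' and ``follows immediately'' leave tacit.
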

 
\begin{proof}
	It suffices to consider marked composition and $\Theta \subseteq \Om^\infty$.
  To settle item (i), we need to find the marked composition 
  $(\om',\kappa') \preceq (\om,\kappa)$ with the minimal reduced norm $|(\om',\kappa')|'$ 
  and the norm $|(\om',\kappa')| = d$. 
  Since $d-|(\om,\kappa)|$ is even, any sequence of 
  $\frac{1}{2}(d-|(\om,\kappa)|)$ insert operations, being applied to $(\om,\kappa)$, does the job. 
  The resulting $(\om',\kappa')$ has the reduced norm
  $|(\om',\kappa') |' = |(\om,\kappa)|'+ \frac{1}{2}(d-|(\om,\kappa)|)$. 
  Hence the dimension of $\mathcal K_d^{(\om',\kappa')}$ is given by  
  $$d-\frac{1}{2}(d - |(\om,\kappa)| + 2|(\om,\kappa)|') =   
	\frac{1}{2}(d + (|(\om,\kappa)| - 2|(\om,\kappa)|')).$$

  Item (ii) follows immediately from the definition of $\psi_\Theta(d)$ and (i).
\end{proof}

%
%


\smallskip

The next few lemmas 
prepare for  \ref{simple_stabilization} below, which, 
in the spirit of \ref{lem13.8}, represents a geometrization 
of the $\mathsf{trunc}$ operator. 
Recall that for $\Theta \subseteq \Om^\infty$ and a number $\ell \geq 0$ we denote by
$\Theta_{|\sim|'=\ell}$ all $(\om,\kappa) \in \Theta$ such that $|(\om,\kappa)|' = \ell$.

\begin{lemma}
  \label{lem:chaincrucial}
	Let $\Theta \subseteq \Om^\infty$ (resp., $\Theta \subseteq \Om$) 
	be a closed equal parity poset, $d \geq 0$ of parity $\Theta$. For
  $j+1 \geq \psi_\Theta(d+2)$, we have an inclusion 
	$\Theta_{|\sim|' = d-j} \cap \Omega_{\langle d+2]}^\infty
	\subseteq \Theta_{\langle d]}$. 
	(resp., $\Theta_{|\sim|' = d-j} \cap \Omega_{\langle d+2]}
	\subseteq \Theta_{\langle d]}$). 
\end{lemma}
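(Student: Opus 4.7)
The plan is to argue by contradiction via a dimension count, using \ref{lem.dim_K}(ii). Pick $(\omega,\kappa) \in \Theta_{|\sim|'=d-j} \cap \Omega_{\langle d+2]}^\infty$. By definition of the intersection, $|(\omega,\kappa)|' = d-j$ and $|(\omega,\kappa)| \leq d+2$. Since $\Theta$ has parity $d$ and $d+2 \equiv d \pmod 2$, only two cases are possible: either $|(\omega,\kappa)| \leq d$, in which case $(\omega,\kappa) \in \Theta_{\langle d]}$ as desired, or $|(\omega,\kappa)| = d+2$. The task reduces to ruling out the second case.

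Suppose for contradiction that $|(\omega,\kappa)| = d+2$. The key observation is that such a marked composition satisfies $(\omega,\kappa) \preceq (\omega,\kappa)$ trivially, and has norm exactly $d+2$, so by the very definition of $\mathcal{K}_{d+2}^{(\omega,\kappa)}$ we have the inclusion
\[
\sR_{d+2}^{(\omega,\kappa)} \;\subseteq\; \mathcal{K}_{d+2}^{(\omega,\kappa)} \;\subseteq\; \mathcal{K}_{d+2}^{\Theta}.
\]
By \ref{cor:bd}, the dimension of $\sR_{d+2}^{(\omega,\kappa)}$ equals $(d+2) - |(\omega,\kappa)|' = (d+2) - (d-j) = j+2$. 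Combined with \ref{lem.dim_K}(ii), which asserts $\dim \mathcal{K}_{d+2}^{\Theta} = \psi_\Theta(d+2)$, this yields
\[
\psi_\Theta(d+2) \;\geq\; \dim \sR_{d+2}^{(\omega,\kappa)} \;=\; j+2,
\]
in direct contradiction with the hypothesis $j+1 \geq \psi_\Theta(d+2)$. Hence $|(\omega,\kappa)| \leq d$, concluding the proof of the marked-composition case.

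The unmarked case $\Theta \subseteq \Om$ is completely parallel: replace $(\omega,\kappa)$ by $\omega$, use \ref{lm1} in place of \ref{cor:bd}, and invoke \ref{lem.dim_K}(i)--(ii) for the unmarked statement. The structural step, the inclusion $\sR_{d+2}^{\omega} \subseteq \mathcal{K}_{d+2}^{\omega}$ when $|\omega|=d+2$, is again immediate from reflexivity of $\preceq$.

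I expect no serious obstacle here: the only subtlety is to notice that the parity condition forces a clean dichotomy between $|(\omega,\kappa)| \leq d$ and $|(\omega,\kappa)| = d+2$, so a single cell from the second alternative already saturates the dimension bound for $\mathcal{K}_{d+2}^{\Theta}$. The argument is essentially a one-line dimension comparison once \ref{lem.dim_K} is in hand.
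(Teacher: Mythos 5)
Your proof is correct and follows essentially the same line as the paper's: assume $|(\omega,\kappa)|=d+2$, observe that $\sR_{d+2}^{(\omega,\kappa)}$ is a cell of dimension $j+2$ contained in $\mathcal{K}_{d+2}^{\Theta}$, and contradict the dimension bound $\dim\mathcal{K}_{d+2}^{\Theta}=\psi_\Theta(d+2)$ from \ref{lem.dim_K}(ii). The only addition you make is spelling out the parity dichotomy and the reflexivity step justifying $\sR_{d+2}^{(\omega,\kappa)}\subseteq\mathcal{K}_{d+2}^{\Theta}$, which the paper leaves implicit.
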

\begin{proof}
	It again suffices to consider the case $\Theta \subseteq \Om^\infty$.
	Consider $(\om,\kappa)$ such that $|(\om,\kappa)| \leq d+2$ and 
	$|(\om,\kappa)|' = d-j$, where $j+1 \geq \psi_\Theta(d+2)$. Contrary to the lemma's 
  claim, assume that $ |(\om,\kappa)| = d+2$. 
  Then 
  $$\dim(\sR_{d+2}^{(\om,\kappa)}) \, = \,
	d+2-(d-j) \, = \,  j+2 \overset{j+1 \,\geq \, \psi_\Theta(d+2)}{>} \psi_\Theta(d+2).$$ 
  By our assumptions, $\sR_{d+2}^{(\om,\kappa)} \subset 
  {\mathcal K}_{d+2}^\Theta$. From \ref{lem.dim_K}, (ii),
  it follows that 
  $$\dim(\sR_{d+2}^{(\om,\kappa)}) \leq 
	\dim(\mathcal K_{d+2}^\Theta) = \psi_\Theta(d+2).$$
  The latter yields a contradiction and thus we conclude that
	$(\om,\kappa) \in \Theta_{\langle d]}$.
\end{proof}

The next lemma claims that, in dimensions greater than $\dim({\mathcal K}_{d+2}^{\Theta})+1$, the boundary operators 
$\d^\infty$ from \ref{eq13.7} carry the same information in degrees $d$ and $d+2$.

\begin{lemma}
  \label{lem:iso}
	\begin{itemize}
\item[(i)]
  Let $\Theta \subseteq \Om^\infty$ be a closed equal parity poset, $d \geq 0$ of the parity $\Theta$.
  Then the following  diagram is a commutative. Its horizontal arrows are the boundary 
  operators $\d^\infty$ and its vertical arrows are isomorphisms 
	for $j \geq \psi_\Theta(d+2)$.
	\end{itemize}
	{\small
	\begin{center}
		\begin{tikzcd}[row sep=large]
			\ZZ[\Om^\infty_{\langle d+2]} \cap \Theta_{|\sim|'=d+2-(j+3)}]
			\arrow[d,"\mathsf{trunc}_\ast" ]
	\arrow[r] 
			&  
			\ZZ[\Om^\infty_{\langle d+2]} \cap \Theta_{|\sim|'=d+2-(j+2)}]
	\arrow[d,"\mathsf{trunc}_\ast" ]
	\arrow[r] 
			&  
			\ZZ[\Om^\infty_{\langle d+2]} \cap \Theta_{|\sim|'=d+2-(j+1)}]
	\arrow[d,"\mathsf{trunc}_\ast" ]
	\\
			\ZZ[\Om^\infty_{\langle d]} \cap \Theta_{|\sim|'=d-(j+1)}]
	\arrow[r] 
			&  
			\ZZ[\Om^\infty_{\langle d]} \cap \Theta_{|\sim|'=d-j}]
	\arrow[r] 
			&  
			\ZZ[\Om^\infty_{\langle d]} \cap \Theta_{|\sim|'=d-(j-1)}]
		\end{tikzcd}
	\end{center}
	}
	\begin{itemize}
\item[(ii)]
  Let $\Theta \subseteq \Om$ be a closed equal parity poset, $d \geq 0$ of the parity $\Theta$.
  Then the following  diagram is a commutative. Its horizontal arrows are the boundary 
  operators $\d$ and its vertical arrows are isomorphisms 
	for $j \geq \psi_\Theta(d+2)$.
	\end{itemize}
	{\small
	\begin{center}
		\begin{tikzcd}[row sep=large]
			\ZZ[\Om_{\langle d+2]} \cap \Theta_{|\sim|'=d+2-(j+3)}]
			\arrow[d,"\mathsf{trunc}_\ast" ]
	\arrow[r] 
			&  
			\ZZ[\Om_{\langle d+2]} \cap \Theta_{|\sim|'=d+2-(j+2)}]
	\arrow[d,"\mathsf{trunc}_\ast" ]
	\arrow[r] 
			&  
			\ZZ[\Om_{\langle d+2]} \cap \Theta_{|\sim|'=d+2-(j+1)}]
	\arrow[d,"\mathsf{trunc}_\ast" ]
	\\
			\ZZ[\Om_{\langle d]} \cap \Theta_{|\sim|'=d-(j+1)}]
	\arrow[r] 
			&  
			\ZZ[\Om_{\langle d]} \cap \Theta_{|\sim|'=d-j}]
	\arrow[r] 
			&  
			\ZZ[\Om_{\langle d]} \cap \Theta_{|\sim|'=d-(j-1)}]
		\end{tikzcd}
	\end{center}
	}
\end{lemma}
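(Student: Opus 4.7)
The plan is to observe that both parts (i) and (ii) have identical structure (one for marked compositions with differential $\d^\infty$, the other for unmarked compositions with differential $\d$), so I would prove them uniformly, and the claim naturally splits into two components: commutativity of the squares, and the vertical arrows being isomorphisms. The commutativity part is essentially free of charge: \ref{lem13.8x}(i) was phrased as saying the three-term sequence of complexes is exact, but in its proof one verifies precisely that $\mathsf{trunc}$ intertwines the differentials $\d^\infty$ and $\d$ on the subcomplexes. I would cite that and write down a single explicit check on a generator $(\omega,\kappa)$ of norm $\leq d$, noting that neither the merge operations nor the insert operations (applied to an element of norm $\leq d$) can produce an element of norm $> d$, so all three squares commute termwise.

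The isomorphism part is the heart of the matter, but it has already been encoded in \ref{lem:chaincrucial}. I would apply it at each of the three reduced-norm indices appearing in the bottom row of the diagrams, namely $d-(j+1)$, $d-j$, and $d-(j-1)$. In the notation of \ref{lem:chaincrucial}, these correspond to the parameter choices $j' \in \{j+1, j, j-1\}$, and the hypothesis required in each case is $j'+1 \geq \psi_\Theta(d+2)$. The binding case is the smallest $j'$, namely $j' = j-1$, which requires exactly $j \geq \psi_\Theta(d+2)$ — precisely our hypothesis. Therefore
\[
    \Theta_{|\sim|' = d-j'} \cap \Om^\infty_{\langle d+2]} \;=\; \Theta_{|\sim|' = d-j'} \cap \Om^\infty_{\langle d]}
\]
for all three values of $j'$, since the reverse inclusion is trivial. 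Consequently, the top and bottom chain groups in each column have the \emph{same} basis of marked compositions, and $\mathsf{trunc}_\ast$, which by definition is the identity on elements of norm $\leq d$ and zero on elements of norm $d+2$, restricts to the identity map on these bases. The identity is tautologically an isomorphism, which finishes the proof.

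There is no genuine obstacle in this lemma; the combinatorial content has been isolated in \ref{lem.dim_K} and \ref{lem:chaincrucial}, and the present statement is a bookkeeping consequence that packages those facts into a form directly usable for the stabilization arguments (of \ref{simple_stabilization} and its companions). The only care needed is to verify that the hypothesis $j \geq \psi_\Theta(d+2)$ is indeed the tight threshold dictated by the leftmost column of the diagram, i.e., by the index $d-(j-1) = d-j+1$, which is the largest reduced norm appearing and therefore the one most sensitive to the presence of elements of norm $d+2$. I would close by remarking that this is exactly why the lemma is stated with the shift $\psi_\Theta(d+2)$ rather than $\psi_\Theta(d)$, and that it is this tight threshold that will drive the numerical bounds in the subsequent stabilization theorems.
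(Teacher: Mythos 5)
Your approach coincides with the paper's: apply \ref{lem:chaincrucial} at each of the three reduced‑norm slots, identify $j'=j-1$ as the binding case, and conclude the vertical maps are identity isomorphisms. That part is correct and matches the paper essentially verbatim. Two points deserve correction, one cosmetic and one substantive.

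\textbf{Cosmetic.} You call the binding column ``the leftmost'' while giving its index as $d-(j-1)=d-j+1$. Read off the diagram, the columns run (left to right) through reduced norms $d-(j+1)$, $d-j$, $d-(j-1)$, so the binding column is the \emph{rightmost} one, not the leftmost. Your numerical identification $j'=j-1$ is right; only the label is switched. Similarly, there are two commuting squares, not three.

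\textbf{Substantive.} In the commutativity step you write that ``neither the merge operations nor the insert operations (applied to an element of norm $\leq d$) can produce an element of norm $>d$.'' This is false: an insert operation raises the norm by $2$, so applied to an element of norm exactly $d$ it produces an element of norm $d+2$. Your explicit ``termwise check'' as stated therefore does not go through. The commutativity is nonetheless true, and you can salvage it in either of two honest ways. First, you can simply stop at the citation of \ref{lem13.8x}(i): it asserts that $\mathsf{trunc}$ is a chain map from $(\Z[\Theta_{\langle d+2]}],\d^\infty)$ to $(\Z[\Theta_{\langle d]}],\d^\infty)$ (it lowers homological degree by $2$), and the diagram is precisely the restriction of this chain map to three consecutive graded pieces, so commutativity is immediate. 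Alternatively, if you want the explicit check, you must handle the norm‑$d$ case: for such $(\omega,\kappa)$ the top differential includes insert terms of norm $d+2$, but $\mathsf{trunc}$ kills exactly those terms, while $\d^\infty$ in degree $d$ omits the insert terms by definition of formula \ref{eq13.7}; the two sides therefore agree after applying $\mathsf{trunc}$. (One can even argue further, as the paper implicitly does, that \ref{lem:chaincrucial} applied one reduced‑norm step down rules out norm‑$d$ elements in the two source columns, so the differentials agree on the nose—but that extra step is not needed once one accepts \ref{lem13.8x}(i).) As written, the offending sentence should be removed or replaced by one of these corrections.
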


\medskip

\begin{proof}
	It suffices to prove (i).
  By \ref{lem:chaincrucial} and the inequality $j+2 = (j+1) + 1 > j \geq \psi_\Theta(d+2)$, we have
	$\Theta_{|\sim|' = d+2-(j+3)} = \Theta_{|\sim|' = d-(j+1)} \subseteq \Theta_{\langle d]}$.

  The same lemma shows that the inequality $j+1 > j \geq \psi_\Theta (d+2)$  
	implies that $\Theta_{|\sim|' = d+2-(j+2)} = \Theta_{|\sim|' = d-j} \subseteq \Theta_{\langle d]}$.

  Finally, this argument also shows that the inequality
  $(j-1)+1  = j \geq \psi_\Theta(d+2)$ implies that
	$\Theta_{|\sim|' = d+2-(j+1)}= \Theta_{|\sim|' = d-(j-1)} \subseteq \Theta_{\langle d]}$.

  Thus by definition, 
  the vertical maps $\mathsf{trunc}_*$ in the diagram are isomorphisms of $\Z$-modules.
 
  Again by \ref{lem:chaincrucial}, for any $(\om,\kappa)$ in
  $\Theta_{|\sim|' = d-j-1}$, 
  $\Theta_{|\sim|' = d-j}$, or
  $\Theta_{|\sim|' = d-j+1}$, 
  we conclude that $\sR_{d+2}^{(\om',\kappa')}$ 
  appears with coefficient $a_{(\om',\kappa')}$ in the expansion of the boundary
  $\partial \sR_{d+2}^{(\om,\kappa)}$ 
  if and only if 
  $\sR_{d}^{(\om',\kappa')}$ 
  appears with coefficient $a_{(\om',\kappa')}$ in the expansion of the boundary
  $\partial \sR_{d}^{(\om,\kappa)}$.
  Therefore the diagram is commutative. 
\end{proof}

Now we are in position to establish our main stabilization results.

\begin{theorem}[{\bf Short stabilization} $\mathbf{d \Rightarrow d+2}${\bf, the projective case}]
	\label{simple_stabilization} 
  Let $\Theta \subseteq \Om^\infty$ be a closed equal parity poset and $d$ 
  of parity $\Theta$.
  \begin{itemize}
    \item[(i)] For all  $j \geq \psi_\Theta(d+2) -1$, 
    there is
      an isomorphism 
      $$T_j: H_j(\cB_{d}^\Theta; \Z) \cong H_{j+2}(\cB_{d+2}^{\Theta}; \Z).$$ 
   \item[(ii)] For all positive $j > \psi_\Theta(d+2) -1$, 
      there is an isomorphism 
      $$T_j^\#: H_j(\cB_d, \cB_{d}^\Theta; \Z) \cong H_{j+2}(\cB_{d+2}, \cB_{d+2}^{\Theta}; \Z).$$
 \end{itemize}
 Both isomorphisms $T_j$ and $T_j^\#$ are delivered by the inverses $(\mathsf{trunc}_\ast)^{-1}$ of truncations $\mathsf{trunc}_\ast$.
\end{theorem}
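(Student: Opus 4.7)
The plan is to derive both isomorphisms from the long exact sequence associated with the short exact sequence of differential complexes furnished by \ref{lem13.8x}(i), after translating everything into the combinatorial language of \ref{th.3.3}. Concretely, \ref{th.3.3} identifies $H_{j+2}(\cB_{d+2}^\Theta;\Z)$ and $H_j(\cB_d^\Theta;\Z)$ with the homologies of the complexes $(\Z[\Theta_{\langle d+2]}],\d^\infty)$ and $(\Z[\Theta_{\langle d]}],\d^\infty)$ at the common grade $|\sim|'=d-j$, and the chain map $\mathsf{trunc}$ decreases homological degree by $2$ (because the same grade corresponds to dimension $j+2$ in $\cB_{d+2}$ and dimension $j$ in $\cB_d$). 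So the theorem amounts to showing that the induced map $\mathsf{trunc}_\ast$ is an isomorphism on homology in the asserted ranges, whereupon $T_j$ and $T_j^\#$ are its inverses.

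For part (i), the first substantive step is to identify the kernel of $\mathsf{trunc}$ geometrically. Closedness of $\Theta$ under $\prec$ makes $\Theta_{d+2}=\Theta\cap\Om^\infty_{d+2}$ exactly the set of cells of $\mathcal K_{d+2}^\Theta$, and formula \ref{eq13.7} shows that the differential $\d^\infty$ restricted to norm-$(d+2)$ elements drops its insert terms (inserts would send a cell out of the full-norm locus) and becomes $\d_{\mathsf M}^\infty$. Hence $(\Z[\Theta_{d+2}],\d_{\mathsf M}^\infty)$ is precisely the cellular chain complex of $\mathcal K_{d+2}^\Theta$, and by \ref{lem.dim_K}(ii) its homology vanishes in all degrees above $\psi_\Theta(d+2)$. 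Feeding this into the $\mathsf{trunc}$-shifted long exact sequence
\[
\cdots\to H_{j+2}(\mathcal K_{d+2}^\Theta;\Z)\to H_{j+2}(\cB_{d+2}^\Theta;\Z)\xrightarrow{\mathsf{trunc}_\ast} H_j(\cB_d^\Theta;\Z)\to H_{j+1}(\mathcal K_{d+2}^\Theta;\Z)\to\cdots
\]
then forces the two flanking terms to vanish throughout the range $j\ge\psi_\Theta(d+2)-1$, so that $\mathsf{trunc}_\ast$ is an isomorphism and (i) follows.

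For part (ii), I would transplant the same argument to the quotient differential complexes computing the relative homologies via \ref{lem.homology_of_quotient}. The truncation descends to a chain map $\mathsf{trunc}^\#$ between $(\Z[\Om^\infty_{\langle d+2]}\setminus\Theta],\d^\#)$ and $(\Z[\Om^\infty_{\langle d]}\setminus\Theta],\d^\#)$, with kernel $(\Z[\Om^\infty_{d+2}\setminus\Theta],\d_{\mathsf M}^\#)$, and the associated long exact sequence delivers $T_j^\#$ once both flanking kernel homologies vanish in the appropriate degrees. The main obstacle — and the source of the strict inequality $j>\psi_\Theta(d+2)-1$ in part (ii) — is that the relative kernel is not the cellular chain complex of a subspace whose dimension is controlled by \ref{lem.dim_K}: the full-norm cells outside $\Theta$ need not be confined to dimensions $\leq \psi_\Theta(d+2)$. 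I would pin down the necessary vanishing either by analyzing the combinatorics of $\Om^\infty_{d+2}\setminus\Theta$ directly, or alternatively by invoking Poincar\'e--Lefschetz duality to convert the relative claim into a stabilization statement for $\cB_d^{\bfc\Theta}$ that dovetails with the companion result \ref{simple_stabilizationc}. Reconciling the two flanking vanishings in the correct range is the delicate point on which the proof hinges.
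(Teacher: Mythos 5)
Your proof of part (i) is essentially the paper's: you identify the kernel of $\mathsf{trunc}$ with the cellular chain complex of $\mathcal K_{d+2}^\Theta$, apply \ref{lem.dim_K}(ii) to bound its top-dimensional homology by $\psi_\Theta(d+2)$, and read off the isomorphism from the long exact sequence. The paper packages this via \ref{lem:iso} as a chain-level isomorphism in a window of three degrees, but the underlying geometric input (the dimension of $\mathcal K_{d+2}^\Theta$) is the same. One small point worth double-checking: at the boundary value $j=\psi_\Theta(d+2)-1$ the flanking group $H_{j+1}(\mathcal K_{d+2}^\Theta)=H_{\psi_\Theta(d+2)}(\mathcal K_{d+2}^\Theta)$ is the top homology of a $\psi_\Theta(d+2)$-dimensional complex and need not vanish, so the long exact sequence alone only guarantees injectivity of $\mathsf{trunc}_\ast$ there; your claim that ``the two flanking terms vanish throughout the range $j\geq\psi_\Theta(d+2)-1$'' is therefore slightly too strong as written.

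For part (ii) you have correctly diagnosed the obstruction (the relative kernel $\Z[\Om^\infty_{d+2}\setminus\Theta_{d+2}]$ is not controlled by $\dim\mathcal K_{d+2}^\Theta$), but the approach you propose — a direct analysis of the quotient differential complex, or an appeal to Poincar\'e duality — is more work than is needed. The paper's route is considerably simpler: apply part (i) once to $\Theta$ and once to the full equal-parity poset $\Theta'$ (for which $\cB_d^{\Theta'}=\cB_d$), so that $\mathsf{trunc}_\ast$ gives compatible isomorphisms on the absolute homology groups $H_\ast(\cB_d^\Theta)\cong H_{\ast+2}(\cB_{d+2}^\Theta)$ and $H_\ast(\cB_d)\cong H_{\ast+2}(\cB_{d+2})$ in the relevant range; the $\mathsf{trunc}_\ast$ maps connect the long exact sequences of the pairs $(\cB_d,\cB_d^\Theta)$ and $(\cB_{d+2},\cB_{d+2}^\Theta)$ into a commutative ladder, and the Five Lemma then delivers the relative isomorphism $T_j^\#$ without ever touching the quotient complex. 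This is the missing idea; once you have it, the ``positive $j$'' restriction in (ii) is accounted for by the special low-degree behaviour of $H_\ast(\R\P^d)$ under degree-shift.
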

\begin{proof} Using that $\ZZ[\Om^\infty_{\langle d+2]} \cap \Theta_{|\sim|'=d+2-j'}]$ is the
	chain group of cellular chain complex of $\cB_{d+2}^\Theta$ in dimension $j'$ and 
                         $\ZZ[\Om^\infty_{\langle d]} \cap \Theta_{|\sim|'=d+2-j'}]$ is the
	chain group of cellular chain complex of $\cB_{d}^\Theta$ in dimension $j'$ 
	assertion (i) follows directly from 
  \ref{lem:iso}. 

  \smallskip

  For (ii), consider the new poset $\Theta'$, consisting of all the elements 
  of $\Om^ \infty$ with the same parity as $d$. 
  Of course, for that poset, we have $\cB_d^{\Theta'_{\langle d]}} = 
  \cB_d$ and 
	$\cB_{d+2}^{\Theta'_{\langle d+2]}} = \cB_{d+2}$.
  The maximal elements of 
	$\Theta'_{\langle d+2]}$
	are $((\underbrace{1,\ldots, 1}_\ell),0)$ for 
  $0 \leq \ell \leq d+2$ of the same parity as $d$.
  The reduced norm of each of those marked compositions is $0$. Hence 
  $\psi_{\Theta'}(d+2)$ is either $1$ or $0$,
  depending on the parity of $d$. Thus by (i), for $j > 0$, 
   $\mathsf{trunc}_*$ induces an isomorphism $H_{j+2}(\cB_{d}; \Z) \cong H_j(\cB_{d+2}; \Z)$.  
  Consider the long exact homology sequences of the two pairs 
  $(\cB_{d+2}, \cB_{d+2}^{\Theta})$ and $(\cB_{d}, \cB_{d}^{\Theta})$ and the 
  homomorphisms $\mathsf{trunc}_*$ connecting them.

%

	\begin{center}\begin{figure}
		\begin{tikzcd}[row sep=large]
	\cdots 
	\arrow[r] 
	\arrow[d,phantom, ""{coordinate, name=Z}]
			&  
	H_{j+2}(\cB_{d+2}^\Theta) 
	\arrow[d,"\mathsf{trunc}_\ast" near start]
	\arrow[r] 
			&  
	H_{j+2}(\cB_{d+2}) 
	\arrow[d,"\mathsf{trunc}_\ast" near start]
	\arrow[r] 
			&  
	H_{j+2}(\cB_{d+2},\cB_{d+2}^\Theta) 
	\arrow[d,"\mathsf{trunc}_\ast" near start]
	\arrow[ddlllr,rounded corners,to path={ --([xshift=2ex]\tikztostart.east)
			|- (Z)[near end]\tikztonodes|- 
			([xshift=-2ex]\tikztotarget.west)-- 
(\tikztotarget)}]
	\\
	\cdots 
	\arrow[r] 
			&  
	H_{j}(\cB_{d}^\Theta) 
	\arrow[r] 
			&  
	H_{j}(\cB_{d}) 
	\arrow[d,phantom,""{coordinate, name=Y}]
	\arrow[r] 
	                & 
        H_{j}(\cB_{d},\cB_{d}^\Theta)  
	\arrow[ddll,rounded corners,to path={ 
			--([xshift=2ex]\tikztostart.east) 
	|-(Y)[near end]\tikztonodes -|([xshift=-2ex]\tikztotarget.west) --(\tikztotarget)}]
			\\
        & 
	H_{j+1}(\cB_{d+2}^\Theta) 
	\arrow[d,"\mathsf{trunc}_\ast" ]
	\arrow[r] 
			& 
	H_{j+1}(\cB_{d+2}) 
	\arrow[d,"\mathsf{trunc}_\ast" ]
	\arrow[r] 
	                & 
        H_{j+1}(\cB_{d+2},\cB_{d+2}^\Theta) 
	\arrow[d,"\mathsf{trunc}_\ast" ]
			\\ 
        & 
	H_{j-1}(\cB_{d}^\Theta) 
	\arrow[r] 
			& 
	H_{j-1}(\cB_{d}) 
	\arrow[r] 
	                & 
        H_{j-1}(\cB_{d},\cB_{d}^\Theta) 
\end{tikzcd}
		\caption{\small{Connecting the long exact homology sequences of  
		$(\cB_d^\Theta,\cB_d)$
		and
		$(\cB_{d+2}^\Theta,\cB_{d+2})$}
		}
		\label{DIAGRAM_X} 
	\end{figure}
\end{center}

\noindent By (i), the two leftmost homomorphisms $\mathsf{trunc}_\ast$ in each row are isomorphisms for all $j +1 > \psi_\Theta(d+2)$. 
Thus, by the Five Lemma, the assertion (ii) follows. 
\end{proof}

If true, the conjecture below would deliver a better geometric understanding of the spaces that appear in the 
preceding proof.

\begin{conjecture} 
  For any closed $\Theta$, the quotient $\cB_{d+2}^\Theta\big/ \,\mathcal K_{d+2}(\Theta)$ is the Thom space 
  of an orientable topological $2$-dimensional microbundle over $\cB_{d}^\Theta$. \hfill $\diamondsuit$
\end{conjecture}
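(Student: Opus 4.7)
The plan is to construct the conjectured microbundle in three steps: a zero section $\iota : \cB_d^\Theta \hookrightarrow \cB_{d+2}^\Theta$, a projection $p : \cB_{d+2}^\Theta \setminus \mathcal K_{d+2}^\Theta \to \cB_d^\Theta$ retracting onto this section, and local charts of the form $U \times \R^2 \hookrightarrow p^{-1}(U)$ with the appropriate compatibility. The zero section is natural: fix the real quadratic $Q_0(x,y)=x^2+y^2$ (which has no zeros on $\R\P^1$) and set $\iota([f]) = [f\cdot Q_0]$. Because multiplication by $Q_0$ preserves the real-root multiplicity pattern, $\iota$ respects the closed poset $\Theta$, and its image lies entirely in $\cB_{d+2}^\Theta \setminus \mathcal K_{d+2}^\Theta$.

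To define the projection $p$ and its local trivializations, I would exploit the cellular decomposition from \ref{cor:bd}. Each cell $\mathring{\sR}_{d+2}^{(\omega,\kappa)}$ outside $\mathcal K_{d+2}^\Theta$, i.e.\ with $|(\omega,\kappa)| < d+2$, is homeomorphic to $\Pi_{s_\omega} \times \Sym^{m'}(\mathbb H)$ with $m'\ge 1$, while the corresponding cell $\mathring{\sR}_{d}^{(\omega,\kappa)}$ in $\cB_d^\Theta$ is $\Pi_{s_\omega} \times \Sym^{m'-1}(\mathbb H)$. Define $p$ cellularly by discarding one ``marked'' complex root in $\mathbb H$, and patch the cellular projections together; compatibility along the attaching maps follows from the observation (implicit in \ref{lem:iso}) that both sides carry matched attaching prescriptions under the merge and insert operations that stay inside $\Theta_{\langle d]}$. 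For local triviality near $\iota(f)$, use the multiplication map $\mu : U \times V \to \cB_{d+2}^\Theta$, $(f',z) \mapsto [f'\cdot (x-zy)(x-\bar z y)]$, where $U$ is a small neighborhood of $f$ in $\cB_d^\Theta$ and $V\subset \mathbb H$ is a small neighborhood of $\mathsf i$; on the open image the marked complex pair near $V$ uniquely determines the quadratic factor, so $\mu$ is an open embedding furnishing a microbundle chart with fiber $V\cong\R^2$ containing $\iota(U)$. Orientability is automatic: each fiber inherits the canonical complex orientation of $\mathbb H\subset\C$, and this is preserved by the transition maps, which are complex-analytic in the coordinate $z$.

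Finally, as $z$ approaches the boundary $\partial\overline{\mathbb H} = \R\P^1$, the quadratic $(x-zy)(x-\bar z y)$ acquires a real double root and the product $f\cdot (x-zy)(x-\bar z y)$ lands in $\mathcal K_{d+2}^\Theta$. Therefore the fiberwise one-point compactification of $p$ -- which by definition is its Thom space -- is precisely $\cB_{d+2}^\Theta/\mathcal K_{d+2}^\Theta$. The hard part will be the consistent global choice of a marked complex pair: along loci where several complex conjugate pairs of a form coincide or exchange as one varies the base point in $\cB_d^\Theta$, no canonical preferred pair exists, and the naive cellular construction of $p$ may fail to patch continuously. Microbundles are formally flexible enough to absorb this ambiguity using local charts rather than a global trivialization, but a rigorous verification of the microbundle axioms on these collision strata appears to require either blowing up the multi-pair coincidence locus before descending to $\cB_d^\Theta$, or a delicate cell-by-cell compatibility check along every merge and insert attaching; this is, I expect, the essential technical obstruction that currently keeps the statement at the level of a conjecture.
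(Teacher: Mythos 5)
The statement you were asked to prove is explicitly labelled a \emph{conjecture} in the paper, immediately after \ref{simple_stabilization}; the authors offer no proof, remarking only that ``if true, the conjecture below would deliver a better geometric understanding of the spaces that appear in the preceding proof.'' So there is no paper proof to compare against, and what you have produced is, correctly, a plan plus an honest assessment of where it breaks down.

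Your plan is the natural one and is consistent with the maps introduced in the surrounding text: the zero section $\iota([f]) = [(x^2+y^2)\cdot f]$ is exactly the embedding $\e$ the authors define just after the conjecture, the fiber $\R^2 \cong \mathbb{H}$ accounting for the extra conjugate pair is the evident geometric picture behind \ref{fig.cells_d->d+2}, and you are right that complex-analyticity of the transition data should furnish the claimed orientability. You have also correctly identified the genuine obstruction: a point of $\cB_{d+2}^\Theta \setminus \mathcal{K}_{d+2}^\Theta$ generally carries $m' > 1$ conjugate pairs, there is no canonical choice of ``which pair to forget,'' and this choice must vary continuously if it is to define the projection. One small but important correction: a Milnor microbundle $B \xrightarrow{i} E \xrightarrow{j} B$ requires the retraction $j$ to be \emph{globally defined} on $E$; only the trivializing charts are allowed to be local. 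So the flexibility of microbundles does not, by itself, ``absorb'' the monodromy of the marked pair — you still need a single continuous $p$ on the whole total space, or you must shrink the total space (at which point identifying the Thom space with $\cB_{d+2}^\Theta/\mathcal{K}_{d+2}^\Theta$ becomes an additional argument). This is precisely why the statement remains a conjecture, and your final paragraph is the right diagnosis of the technical core. A further issue worth noting in any eventual proof: even near the zero section, if $f$ already has conjugate pairs close to $\mathsf{i}$, the ``new'' pair is not distinguishable from the old ones, so even local triviality requires care (e.g., by allowing the base point of $\mathbb{H}$ used in the chart to move with $f$).
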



The same proof a the proof of \ref{simple_stabilization} yields the following stabilization result for
spaces of polynomials.

\begin{theorem}[{\bf Short stabilization} $\mathbf{d \Rightarrow d+2}${\bf, the polynomial case}]\label{pol_simple_stabilization} 
  Let $\Theta \subseteq \Om$ be a closed equal parity poset and $d$ 
  of parity $\Theta$.
  \begin{itemize}
    \item[(i)] For all  $j \geq \psi_\Theta(d+2) -1$, 
    there is
      an isomorphism 
		  $$T_j: H_j(\bar{\cP}_{d}^\Theta; \Z) \cong H_{j+2}(\bar{\cP}_{d+2}^{\Theta}; \Z).$$ 
   \item[(ii)] For all positive $j > \psi_\Theta(d+2) -1$, 
      there is an isomorphism 
		  $$T_j^\#: H_j(\bar{\cP}_d, \bar{\cP}_{d}^\Theta; \Z) \cong H_{j+2}(\bar{\cP}_{d+2}, \bar{\cP}_{d+2}^{\Theta}; \Z).$$
 \end{itemize}
 Both isomorphisms $T_j$ and $T_j^\#$ are delivered by the inverses $(\mathsf{trunc}_\ast)^{-1}$ of truncations $\mathsf{trunc}_\ast$.
\end{theorem}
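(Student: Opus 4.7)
The plan is to mimic the proof of \ref{simple_stabilization} step by step, replacing the bivariate combinatorial chain complex $(\Z[\Theta\cap\Om_{\langle d]}^\infty], \d^\infty)$ from \ref{th.3.3} by the univariate chain complex $(\Z[\Theta\cap\Om_{\langle d]}], \d)$ from \ref{cor13.4}, which computes the reduced homology of $\bar{\cP}_d^\Theta$. The stabilizing chain map is again the truncation $\mathsf{trunc}$, which sends compositions of norm $d+2$ to zero and is the identity on compositions of norm $\le d$; it commutes with $\d$, as noted in \ref{lem13.8x}.

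For (i), I invoke part (ii) of \ref{lem:iso}: its diagram has horizontal arrows $\d$ and vertical arrows $\mathsf{trunc}_\ast$ that are isomorphisms of chain groups for $j\ge\psi_\Theta(d+2)$. Passing to homology and applying \ref{cor13.4} delivers the isomorphism $T_j$ for $j\ge\psi_\Theta(d+2)-1$.

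For (ii), I take the universal closed equal parity subposet $\Theta'\subseteq\Om$ of all compositions of the same parity as $d$, so that $\bar{\cP}_d^{\Theta'}=\bar{\cP}_d\cong S^d$ and $\bar{\cP}_{d+2}^{\Theta'}=\bar{\cP}_{d+2}\cong S^{d+2}$. Because the reduced homology of a sphere vanishes in all middle dimensions, $\mathsf{trunc}_\ast:\tilde H_{j+2}(\bar{\cP}_{d+2})\to\tilde H_j(\bar{\cP}_d)$ is automatically an isomorphism in the range relevant to the Five Lemma argument. Writing out the long exact homology sequences of the pairs $(\bar{\cP}_{d+2},\bar{\cP}_{d+2}^\Theta)$ and $(\bar{\cP}_d,\bar{\cP}_d^\Theta)$ connected by $\mathsf{trunc}_\ast$, exactly as in the bivariate proof, and invoking the Five Lemma, yields the isomorphism $T_j^\#$ for $j>\psi_\Theta(d+2)-1$.

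The main obstacle, as in the projective case, is the bookkeeping needed to identify the chain-level isomorphisms of \ref{lem:iso} with topologically meaningful stabilization maps on pair homology. The additional subtlety specific to the polynomial setting is that $\bar{\cP}_d$ is a one-point compactification rather than an intrinsic compact manifold, so one must verify that the truncation respects the basepoint $\infty$ in both $\bar{\cP}_d$ and $\bar{\cP}_{d+2}$. This is essentially immediate from formula~\ref{eq13.7uni}, which treats the minimal cell $\omega=(d)$ separately and assigns it zero differential, thereby encoding compatibility with the distinguished $0$-cell at infinity.
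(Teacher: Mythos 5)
Your proposal is correct and follows essentially the same route as the paper, which in fact disposes of this theorem in a single sentence ("the same proof as \ref{simple_stabilization}\dots"). Part~(i) of your argument is a straightforward transcription: the univariate diagram of \ref{lem:iso}(ii) together with \ref{cor13.4} gives the chain-level isomorphisms, exactly as the paper does for $\cB_d^\Theta$.

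For part~(ii) your justification diverges slightly — and, arguably, improves on — the paper's. The paper's projective-case proof reaches the analogous conclusion by computing $\psi_{\Theta'}(d+2)$ for the universal $\Theta'$ and claiming it equals $0$ or $1$; but with the maximal elements $(1^\ell,0)$ having norms running up to $d+2$ and reduced norm $0$, the definition of $\psi$ actually gives $\psi_{\Theta'}(d+2)=d+2$, so invoking part~(i) for $\Theta'$ yields nothing nontrivial. Your observation that the reduced homology of $\bar{\cP}_d\cong S^d$ and $\bar{\cP}_{d+2}\cong S^{d+2}$ vanishes in the relevant middle dimensions circumvents this and lets the Five Lemma apply directly. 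The one corner you leave unexamined, as does the paper, is $j=d$, where both sides are $\Z$ generated by fundamental classes; there one should note that $\mathsf{trunc}$ sends the cellular fundamental cycle of $S^{d+2}$ (supported on the cells $(1^\ell)$, $\ell\le d+2$) to that of $S^d$ by deleting the single top term $(1^{d+2})$, so $\mathsf{trunc}_\ast$ is still an isomorphism. Your remark about the basepoint $\infty$ and the special treatment of $\omega=(d)$ in \ref{eq13.7uni} is a sensible sanity check, though the more relevant phenomenon encoded there is that the merges $\sM_0$ and $\sM_{s_\omega}$, which in the projective picture push mass to the root at infinity, are omitted from the univariate differential precisely because the escaping cells are collapsed to the single $0$-cell at $\infty$.
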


Next we give two examples where the stabilization bound $\psi_\Theta$ 
from \ref{simple_stabilization} and \ref{pol_simple_stabilization} can be  
explicitly determined.

\begin{example} 
	For \ref{pol_simple_stabilization} we give in \ref{sec:compute} 
a long list of homology computations for $\Theta \subseteq \Om$ which are generated by a single composition. 
	In these cases, for $d \geq |\om|$, we have $\psi_\Theta(d) = d+|\om|-2 |\om|'$. 
The reader is welcome to verify the 
results from \ref{pol_simple_stabilization} in the tables.
\end{example} 

Interesting examples, where the stabilization bound from \ref{simple_stabilization} is non-trivial can be derived from 
\ref{ex:profinite} or cases where $\Theta$ is \emph{finitely generated}. 
The next examples will not provide interesting applications of \ref{simple_stabilization}, but show that 
the stabilization bound in it is sharp. 

\begin{example}\label{ex.skeleton}
For $\ell = 0$ or $1$,  the two closed equal parity posets given by 
\begin{equation}\label{eq.skeleton_A}
	\Theta_{\ell,q} = \Om^\infty_{|\sim|' \geq q} := \Big\{(\om,\kappa) \in \Om^\infty  \,\Big|\, |(\om,\kappa)|' \geq q, \, |(\om,\kappa)| \equiv \ell \mod 2\,\Big\}; 
\end{equation}
%
are easily seen to be \emph{non}-profinite. The space $\cB_d^{\Theta_{\ell,q}}$ is the $(d-q)$-\emph{skeleton} of $\cB_d$.

By definition $\eta_\Theta(d)$, we get $\eta_\Theta(d) = d-2q$.  Thus
	$\psi_{\Theta}(d) = d-q$ 
	and $\psi_{\Theta}(d+2) = d+2 -q$.
	Now \ref{simple_stabilization} shows that, if
	$j +1 \geq d+2-q$ or, equivalently, $j \geq d+1-q$, then 
	$H_j(\cB_{d}^{\Theta_{\ell,q}}; \Z) \cong H_{j+2}(\cB_{d+2}^{\Theta_{\ell,q}}; \Z).$ 
	For $q = 2$, the homology of $\cB_d^{\Theta_{\ell,q}}$ is listed in \ref{fig:skeleton} for 
	small $d$ and $\ell \equiv d \mod 2$. The list is the result of our computer-assisted calculations.
	
	Note that the stabilization from \ref{simple_stabilization} appears for 
	$j \geq d-1$, which is just right above the top non-vanishing homology group.
	On the other hand, for $j = d-2$, \ref{fig:skeleton} testifies that there is no stabilization.
	 In fact, for $j < d-2$,  the homology is stable, since here we are considering the skeletae of the real projective space.
	\hfill $\diamondsuit$
\begin{figure}
	{\tiny
	$
\begin{array}{c|c|c|c|c|c|c|c|c|c|c|c|c}
        d\; \backslash \; i & 1 & 2 & 3 & 4 & 5 & 6 & 7 & 8 & 9 \\
          &        &       &        &       &          &         &         &       & \\
        4 & \Z/2\Z & \ZZ^3 &        &       &          &         &         &       & \\  
          &        &       &        &       &          &         &         &       & \\
        5 & \Z/2\Z & 0     & \ZZ^7  &       &          &         &         &       & \\  
          &        &       &        &       &          &         &         &       & \\
        6 & \Z/2\Z & 0     & \Z/2\Z & \ZZ^8 &          &         &         &       & \\  
          &        &       &        &       &          &         &         &       & \\
        7 & \Z/2\Z & 0     & \Z/2\Z & 0     & \ZZ^{13} &         &         &       & \\  
          &        &       &        &       &          &         &         &       & \\
        8 & \Z/2\Z & 0     & \Z/2\Z & 0     & \Z/2\Z   & \Z^{15} &         &       & \\  
          &        &       &        &       &          &         &         &       & \\
        9 & \Z/2\Z & 0     & \Z/2\Z & 0     & \Z/2\Z   & 0       & \Z^{21} &       & \\  
          &        &       &        &       &          &         &         &       & \\
       10 & \Z/2\Z & 0     & \Z/2\Z & 0     & \Z/2\Z   & 0       & \Z/2\Z  & Z^{24}& \\
          &        &       &        &       &          &         &         &       & \\
       11 & \Z/2\Z & 0     & \Z/2\Z & 0     & \Z/2\Z   & 0       & \Z/2\Z  & 0      & Z^{31} \\ 
\end{array}
$
	}
	\caption{Homology of the space $\cB_d^{\Theta_{\ell, 2}}$, the $(d-2)$-skeleton of $\cB_d$.  The poset $\Theta_{\ell, 2}$ is defined in \ref{eq.skeleton_A}.
	}
	\label{fig:skeleton}
\end{figure}

\end{example}
\begin{example}\label{ex.arnold}
	Fix a parity and a number $k \geq 2$.
	Consider the closed equal parity poset $\Theta = \Theta^{\max \geq k}$ containing all
	$(\om,\kappa)$ of the chosen parity for which there is a part 
	of $\omega$ that is $\geq k$
	or $\kappa \geq k$. Thus $\cB^\Theta_d$ is the space of classes of binary forms 
	of degree $d$ with at least one $k$\textsuperscript{fold} root (see
	 \ref{th:Arnold4} and \ref{th:Va2} for some results on analogously defined spaces).
	 
	 The maximal elements of $\Theta$ are
	 the $(\omega,0)$ where $\omega$ has one part of size $k$ and all
	 other being $1$. It follows that $|(\om,0)|' = k-1$.
	 Hence $\eta_\Theta (d) = d-2(k-1)$. Therefore
	 $\psi_\Theta(d) = d-k+1$ and $\psi_\Theta(d+2) = d-k+3$. 
	Again, \ref{simple_stabilization} shows that, if 
	$j+1  \geq d-k+3$ or, equivalently, $j \geq d-k+2$, 
	then $H_j(\cB_{d}^\Theta; \Z) \cong H_{j+2}(\cB_{d+2}^{\Theta}; \Z)$. 
	Looking at \ref{fig:triple}, one sees that the stabilization bound is again right about the top non-vanishing homology group. However, it seems that here a stabilization below that bound is present. \hfill $\diamondsuit$
\end{example}

Let us describe, in terms of divisors, the effect of increasing the degree $d \rightarrow d+2$ on the spaces of projectivized binary forms.

Let $\e : \cB_{d} \rightarrow \cB_{d+2}$ be the map which sends the class of a binary form 
$f(x, y)$ in $\cB_d$ to the class of the binary form $(x^2+y^2) f(x, y)$ in $\cB_{d+2}$. 
Since $x^2+y^2$ has no real roots in $\C\P^1$, for any $(\om,\kappa) \in \Om^\infty_{\langle d]}$  
the image of $\mathring{\sR}_d^{(\om,\kappa)}$ under $\e$ is a subset of $\mathring{\sR}_{d+2}^{(\om,\kappa)}$.

Consider the $2$-sphere $\C\P^1$ and the involution $J: \C\P^1 \to \C\P^1$, given by the complex conjugation $J[z : w] = [\bar z : \bar w]$. The fixed point set of $J$ is the circle $\bar \R = \R\P^1$. We can identify $\cB_{d}$ with the space positive $J$-symmetric divisors on $\C\P^1$ of degree $d$. Then $\e$ translates into a map $\e^\bullet: [Sym^d(\C\P^1)]^J \to [Sym^{d+2}(\C\P^1)]^J$ that takes each divisor $D$ to the divisor $D + [\mathsf i: 1] + [\mathsf {-i} : 1]$. 
Thus the real divisors $\e^\bullet(D)_{\bar\R}$ and $D_{\bar\R}$ share the same combinatorial type $(\om,\kappa)$. 

\smallskip

Out next goal is to give alternative pure \emph{homological} definitions of the maps 
$(\mathsf{trunc})^\ast$, $(\mathsf{trunc})_\ast$. The results will allow us the study stabilization in cohomology of
the spaces $\cB_d^{\bfc \Theta}$. With this goal in mind, let us describe a list of hypotheses, needed for such a definition.

\smallskip
{\sf Hypotheses A.}  Let $M$ be a closed $n$-dimensional manifold $M$, equipped with a structure 
 of a finite CW-complex, and $N \subseteq M$ a closed $(n-k)$-dimensional submanifold whose normal bundle is orientable. Assume that the intersections of $N$ with the cells of $M$ induce a structure of a cellular complex on $N$. 

  Let $K \subset M$ be a compact CW-subcomplex of $M$. Put $L := K \cap N$
	and let \ $M \setminus K$ and $N \setminus L$ be connected.

Assume that, for each open cell $e$ of $M$, the intersection $e \cap N$ is 
either a single  cell such that $\dim(e) = \dim(e \cap N) +k$, or $e \cap N = \emptyset$. 
We assume also that $M$ admits a triangulation $T$ that is consistent with its cellular structure: so that $K, L, N$ and their cellular skeletons are closed subcomplexes of this triangulation.
\smallskip  

Put $\Lambda := \Z[\pi_1(M\setminus K)]$. Let $\A$ be a right $\Lambda$-module. The embedding $\e: N \setminus L \subset M\setminus K$, with the help of the  homomorphism $\e_\ast: \pi_1(N \setminus L) \to \pi_1(M\setminus K)$,  converts the $\Lambda$-module $\A$ into a $\Z[\pi_1(N\setminus L)]$-module $\e^\ast(\A)$. 
\smallskip

Let $[M] \in H_n(M; \Z^t)$ and $[N] \in H_{n-k}(M; \Z^t)$ be the fundamental  classes of $M$ and $N$, respectively.

We denote by $\mathcal C_\bullet(\sim)$ and $\mathcal C^\bullet(\sim)$ the 
$\Z$-modules of cellular chains of a cellular complex.\hfill $\diamondsuit$

\begin{lemma}\label{lem.diagram} 
 Under {\sf Hypotheses A},  the following diagram is commutative:
\begin{eqnarray}\label{eq.DIAGRAM}
H^{j}(N, L;\, \e^\ast(\A)) & \stackrel{(\mathsf{trunc})^\ast}{\longrightarrow} & H^{j + k}(M, K;\, \A) \nonumber\\
\cong\, \downarrow [N] \cap &    & \cong\, \downarrow [M] \cap \nonumber \\
H_{n - j}(N \setminus L;\, \e^\ast(\A^t)) &  \stackrel{\e_\ast}{\longrightarrow} & H_{n - j}(M \setminus K;\, \A^t), 
\end{eqnarray}
where the vertical isomorphisms are the Poincar\'{e} duality maps, $\mathcal D_N = [N] \cap \sim$ and $\mathcal D_M = [M] \cap \sim$. The horizontal homomorphisms $(\mathsf{trunc})^\ast$ are induced by the duals of the chain homomorphisms 
$\mathsf{trunc}: \mathcal C_{\ast + k}(M, K;\, \A) \to \mathcal C_{\ast }(N, L;\, \e^\ast(\A)),$ the latter ones are produced by the intersection of cells $\{e\}$ in $M$ with $N$. 
The proof below contains the exact definition of the cohomological transfer $(\mathsf{trunc})^\ast$ from the diagram \ref{eq.DIAGRAM}.
\end{lemma}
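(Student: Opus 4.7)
The plan is to define the transfer map $(\mathsf{trunc})^\ast$ at the cellular cochain level and then verify commutativity of \ref{eq.DIAGRAM} by reducing to an explicit geometric model via a dual cell decomposition.

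First I would define the chain map $\mathsf{trunc}: \mathcal C_{\ast + k}(M, K;\, \A) \to \mathcal C_{\ast}(N, L;\, \e^\ast \A)$ by sending a relative cell $e$ of $M$ (modulo $K$) to its transverse intersection $e \cap N$. By {\sf Hypotheses A}, this intersection is either empty, in which case $e$ is sent to $0$, or a single cell of $N$ of dimension $\dim(e) - k$; the orientability of the normal bundle of $N$ in $M$ allows one to fix signs so that $\mathsf{trunc}$ is compatible with the cellular boundary operators and sends $K$-chains to $L$-chains. Dualizing yields the desired cochain homomorphism $(\mathsf{trunc})^\ast: \mathcal C^\ast(N, L;\, \e^\ast \A) \to \mathcal C^{\ast + k}(M, K;\, \A)$ which, on the cohomology level, produces the horizontal arrow in the top row of \ref{eq.DIAGRAM}.

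Next I would invoke the dual cell decomposition on $M \setminus K$ associated to the triangulation $T$, together with its restriction to $N \setminus L$. Under this duality, each cell $e$ of $M$ not contained in $K$ corresponds to a dual cell $e^\vee$ in $M \setminus K$ of complementary dimension, and the Poincar\'e duality isomorphism $\mathcal D_M = [M] \cap \sim$ is represented at the chain level by sending the cochain dual to $e$ to $e^\vee$. An analogous description holds for $\mathcal D_N$ on $N$. The crucial geometric observation is that, whenever $f = e \cap N$ is a non-empty cell of $N$, the dual cell $f^\vee$ constructed inside $N \setminus L$ coincides (up to canonical identification coming from the tubular neighborhood of $N$ in $M$) with the restriction of $e^\vee \subset M \setminus K$ to the appropriate transverse slice. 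Consequently the two compositions $\mathcal D_M \circ (\mathsf{trunc})^\ast$ and $\e_\ast \circ \mathcal D_N$ assign to the dual cochain of $f$ the same cellular chain class in $H_{\ast}(M \setminus K;\, \A^t)$.

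The main technical obstacle will be the bookkeeping of local coefficient systems and orientations. One must verify that the twisted coefficient system $\A^t$ on $M \setminus K$ restricts along $\e$ to $\e^\ast(\A^t) = (\e^\ast \A)^t$ on $N \setminus L$, which follows from the compatibility of the Stiefel--Whitney homomorphism $w$ with the embedding together with the orientability of $\nu(N \subset M)$; the latter ensures that the ambient orientation twist of $M$ and the intrinsic twist of $N$ differ by a trivializable contribution, and hence commute with the cap product. Once these identifications are fixed consistently with the chosen orientations of the cells and their duals, the diagram commutes already at the level of cellular (co)chains, and passing to (co)homology gives \ref{eq.DIAGRAM}.
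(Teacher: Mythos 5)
Your overall strategy—representing both Poincar\'e duality maps via dual cells of a compatible triangulation and then matching $e^\vee$ against $(e\cap N)^\vee$—is a genuinely different route from the paper's. The paper instead represents $(\mathsf{trunc})^\ast(\alpha)$ by the cocycle $\pi^\ast(\alpha)\cup u$, where $\pi:U\to N$ is the retraction of a regular neighborhood and $u\in H^k(U,\partial U)$ is a Thom class Poincar\'e dual to $\e_\ast[N]$; it then computes $[M]\cap(\pi^\ast(\alpha)\cup u)=(\,[M]\cap u\,)\cap\pi^\ast(\alpha)$ and pushes forward by $\pi_\ast$. Your dual-cell picture, if carried out, yields a more explicit chain-level version of that cup/cap computation.

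There is, however, a concrete error in the geometric step you rely on. Writing $f=e\cap N$, one has $\dim e^\vee=\dim M-\dim e=(n+k)-(\dim f+k)=n-\dim f$ and $\dim f^\vee=\dim N-\dim f=n-\dim f$, so $e^\vee$ and $f^\vee$ have \emph{the same} dimension. Consequently $f^\vee$ cannot be ``the restriction of $e^\vee$ to a transverse slice'' in any reasonable sense—restriction to a transverse slice would drop the dimension. The correct geometric picture, which would make your argument go through, is that after arranging the dual cells inside the regular neighborhood $U(N)$ coming from the second barycentric subdivision, the retraction $\pi:U\to N$ carries $e^\vee$ onto $f^\vee$ with degree one, so $\e_\ast[f^\vee]=[e^\vee]$ in $H_{\ast}(M\setminus K;\A^t)$. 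You should also address two points you pass over: (i) dual blocks are defined for the triangulation $T$, not for the CW cells $\{e\}$ directly, so the cochain dual to a cell $e$ must first be expressed as a compatible simplicial cochain (or a sum of dual blocks of the simplices of $e$) before the duality correspondence is invoked; and (ii) the map $(\mathsf{trunc})^\ast$ is a priori only a cochain map, and its descent to cohomology is not automatic—the paper checks this by running the skeletal filtration spectral sequences for $(N,L)$ and $(M,K)$ and verifying compatibility with the connecting differentials, a step your sketch omits.
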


\begin{proof} 
 We use the second barycentric subdivision of the triangulation $T$ to form regular neighborhoods of closed subcomplexes of $T$. 
\smallskip

Since, for each open cell $e$ of $M$, the intersection $e \cap N$ is either a single cell such that $\dim(e) = \dim(e \cap N) +k$, or $e \cap N = \emptyset$, the restriction of the cellular structure on $M$ to $N$ gives rise to a cellular structure on $N$ and produces the correspondence $\mathsf{trunc}: e \leadsto e \cap N$. Since $\nu(N, M)$ is orientable, its orientation helps to use the orientation of $e$ to orient $e \cap N$. 
The correspondence $\mathsf{trunc}$ leads to a well-defined homomorphisms on the \emph{cellular} chain level: 
$$\mathsf{trunc}: \mathcal C_{j+k}(K;\, \A)  \to  \mathcal C_{j}(L;\, \e^\ast(\A)), \; \mathsf{trunc}: \mathcal C_{j + k}(M, K;\, \A) \to  \mathcal C_{j}(N, L;\, \e^\ast(\A))$$
We  use the identity $[(\mathsf{trunc})^\ast(\a)](e) := \a(e \cap N)$ for any cochain $\a \in \mathcal C^{j }(L;\, \e^\ast(\A))$ and a $(j+k)$-dimensional cell $e \subset K$ to introduce the dual operator $(\mathsf{trunc})^\ast$. 
\smallskip

Our next goal is to define the truncation $H^{j}(N, L;\, \e^\ast(\A)) \stackrel{(\mathsf{trunc})^\ast}{\longrightarrow} H^{j + k}(M, K;\, \A)$ in pure homological terms, since the cap products in \ref{eq.DIAGRAM} are not directly amenable to the cellular structure $\{e\}$ on $M$. 

Let $M_p, K_p$ denote the $p$-skeletons of $M, K$ in the cellular structure $\{e\}$,  and $N_p, L_p$ denote the $p$-skeletons of $N, L$ in the cellular structure $\{N \cap e\}$. 
To simplify the notations, put $$K_p^\sharp := K_p \cup L, \;\; M_p^\sharp := M_p \cup K, \;\; N_p^\sharp := N_p \cup L.$$ We omit the relevant coefficient systems $\A$, $\A^t$ and $\e^\ast(\A)$, $\e^\ast(\A^t)$ in our notations. 
\smallskip

Consider the following filtrations by closed subsets: 
\begin{eqnarray*}
N & := & N_n^\sharp \supset \ldots \supset N_p^\sharp \supset N_{p-1}^\sharp  \supset \ldots \supset N_0^\sharp, \\
M & := & M_{n+k}^\sharp \supset \ldots \supset M_p^\sharp \supset M_{p-1}^\sharp \supset \ldots \supset M_0^\sharp, 
\end{eqnarray*}
 
These filtrations give rise to two spectral sequences, whose $E_1$-pages  
are: 
\begin{eqnarray*}
 E^{p, q}_1(N, L) = H^{p+q}(N_p^\sharp, N_{p-1}^\sharp) \text{ converges to } \Rightarrow   H^\ast(N, L),  \\
 E^{p, q}_1(M, K) = H^{p+q}(M_p^\sharp, M_{p-1}^\sharp) \text{ converges to }\Rightarrow   H^\ast(M, K), 
 \end{eqnarray*}
Therefore, in terms of the given cellular structures $\{e \cap N\}$  in $N$ and $\{e\}$ in $M$, we get $E^{p, 0}_1(N, L) \cong \mathcal C^p(N, L)$, and $E^{p, q}_1(N, L) = 0$ for all $q > 0$ since the cohomology with compact support of each open $p$-cell arise only in dimension $p$.  
Similarly,  $E^{p, 0}_1(M, K) \cong \mathcal C^p(M, K) $, and $E^{p, q}_1(M, K) = 0$ for all $q > 0$. 




Note that $N_p^\sharp \setminus N_{p-1}^\sharp = N_p \setminus (N_{p-1} \cup L_p))$ and $M_{p+k}^\sharp \setminus M_{p+k-1}^\sharp = M_{p+k} \setminus (M_{p+k-1} \cup K_{p+k}))$ are unions of open $p$-cells and $(p+k)$-cells, respectively. 
So we have a homomorphism $(\mathsf{trunc}^\ast)^{p, q}: E_1^{p,q}(N, L) \to E_1^{p+k,q}(M, K)$, defined by:
\begin{eqnarray}\label{eq.the_p-diagram}
 H^{p}(N_p^\sharp, N_{p-1}^\sharp)  \cong \mathcal C^p(N, L)& \stackrel{(\mathsf{trunc})^\ast}{\longrightarrow}  & \mathcal C^{p+k}(M, K)  \cong  H^{p+k}(M_{p+k}^\sharp, M_{p+k-1}^\sharp). 
\end{eqnarray}
It relies on the coherent orientations of the pairs $(e, e\cap N)$ of cells and amounts to attaching the value $\a(e \cap N)$ to each $(p+k)$-cell $e$ in $M$ and each $p$-cochain $\a$ that represents an element of $H^{p}(N_{p}^\sharp, N_{p-1}^\sharp)$.
%
%

%
It remains to verify that $\mathsf{trunc}^\ast: E_1^{p,q}(N, L) \to E_1^{p+k,q}(M, K)$ commutes with the differentials of the two spectral sequences. 
%
%
This verification is in line with the standard arguments that show the equivalence of singular and cellular (co)homology.  We incorporate the differentials from both spectral sequences in the commutative diagrams as on page 227 of \cite{Mu} (with homology being replaced with cohomology and the arrows being reversed). The top of the two diagrams is based on the relative cohomology of different pairs from the quadruple $$M_{p+k+1}^\sharp \supset M_{p+k}^\sharp \supset M_{p+k-1}^\sharp \supset M_{p+k-2}^\sharp,$$
and the bottom one on on the relative cohomology of different pairs from the quadruple $$N_{p+1}^\sharp \supset N_{p}^\sharp \supset N_{p-1}^\sharp \supset N_{p-2}^\sharp.$$

The degree raising differentials $\delta^{p-1}_N: H^{p-1}(N_{p-1}^\sharp, N_{p-2}^\sharp) \to H^{p}(N_{p}^\sharp, N_{p-1}^\sharp)$ and $\delta^{p}_N: H^{p}(N_{p}^\sharp, N_{p-1}^\sharp) \to H^{p+1}(N_{p+1}^\sharp, N_{p}^\sharp)$ are taken from the long exact cohomology sequence of the triples $\{N_{p}^\sharp \supset N_{p-1}^\sharp \supset N_{p-2}^\sharp\}$ and $\{N_{p+1}^\sharp \supset N_{p}^\sharp \supset N_{p-1}^\sharp\}$, respectively. Similarly, we have the differentials $\delta^{p+k-1}_M: H^{p+k-1}(M_{p+k-1}^\sharp, M_{p+k-2}^\sharp) \to H^{p+k}(M_{p+k}^\sharp, M_{p+k-1}^\sharp)$ and $\delta^{p+k}_M: H^{p+k}(M_{p+k}^\sharp, M_{p+k-1}^\sharp) \to H^{p+k+1}(M_{p+k+1}^\sharp, M_{p+k}^\sharp)$.  By the lemma hypotheses ``$\d(e) \cap N = \d(e \cap N)$", these differentials coincide with the coboundary operators on the cochain level. By the diagrams as on page 227 of \cite{Mu}, 
the differentials $$\delta^{p-1}_N = \d^{p-1}: \mathcal C^{p-1}(N, L) \to \mathcal C^{p}(N, L), \quad \delta^{p}_N = \d^{p}: \mathcal C^{p}(N, L) \to \mathcal C^{p+1}(N, L).$$ 
 $$\delta^{p+k-1}_M = \d^{p+k-1}: \mathcal C^{p+k-1}(M, K) \to \mathcal C^{p+k}(M, K),$$    $$\delta^{p+k}_M = \d^{p+k}: \mathcal C^{p+k}(M, K) \to \mathcal C^{p+k+1}(M, K)$$
commute with the $\mathsf{trunc}^\ast$ maps: that is,  $\delta^{p-1}_N \, \circ\, \mathsf{trunc}^\ast = \mathsf{trunc}^\ast\, \circ \, \delta^{p+k-1}_M$  and $\delta^{p}_N\, \circ \, \mathsf{trunc}^\ast = \mathsf{trunc}^\ast\, \circ \, \delta^{p+k}_M$. Therefore (see \cite{Mu}, Theorems 39.4 and 39.5) the map from \ref{eq.the_p-diagram} gives rise to a homomorphism 
$$\mathsf{trunc}^\ast: H^p(N, L) = \frac{\mathsf{ker}(\delta^{p}_N)}{\mathsf{im}(\delta^{p-1}_N)}   \longrightarrow   \frac{\mathsf{ker}(\delta^{p+k}_M)}{\mathsf{im}(\delta^{p+k-1}_M)} = H^{p+k}(M, K),$$
which is now well-defined on the level of cohomology (and not only on the cellular level).
As a result, all the maps in diagram \ref{eq.DIAGRAM} are well-defined homologically.




\smallskip

Thus to validate the commutativity of \ref{eq.DIAGRAM}, we seek to verify that, for a given $\a\in H^{p}(N, L; \e^\ast(\A))$, we get $[M] \cap \mathsf{trunc}^\ast(\a) = \e_\ast([N] \cap \a).$ 

\smallskip

Let $U(N) \subset M$ be a regular closed neighborhood of $N$ in $M$ (defined via the second barycentric subdivision of $T$) and $\pi: U \to N$ its canonical retraction; thanks to the properties of the triangulation $T$,  $\pi^{-1}(L) = K \cap U$. Let $u \in H^k(U, \d U)$ be a relative cocycle, Poincar\'{e}-dual in $U$ to the fundamental cycle $\e_\ast([N]) \in H_n(U; \Z^t) \cong H_n(N; \Z^t)$. Then $[M] \cap u = \e_\ast([N])$ and $\pi^\ast(\a) \cup u$ is a relative cocycle of dimension $p+k$ in $(U, \d U)$. Since $\a$ vanishes on singular chains on $L$, $\pi^\ast(\a) \cup u$ vanishes on singular chains on $\pi^{-1}(L) \cup \d U =  (K \cap U) \cup \d U$. So $\pi^\ast(\a) \cup u$ extends to a relative cocycle on $(M, K)$ that vanishes on $K \cup (M \setminus \mathsf{int}(U))$. Abusing notations, we denote this extension by $\pi^\ast(\a) \cup u$ as well. \smallskip

For any $(p+k)$-cell $e$, we denote by $e_U$ the intersection $e \cap U$. Note that the interior of $e_U$ is an open $(p+k)$-cell, and $e_U$ is a regular neighborhood of $e \cap N$ in $e$. Thanks to the constructions of $T$ and $U$, it is equipped with the canonical projection $\pi: e_U \to e \cap N$ whose fiber is a $k$-dimensional $\mathsf{PL}$-ball.  
Then, for any relative $p$-cocycle $\a$ that vanishes on the boundary $\d(e \cap N)$, we get $(u \cup \pi^\ast(\a))(e_U) = \a(e \cap N)$. Thus we conclude that the restrictions $(u \cup \pi^\ast(\a))|_{e_U} = \mathsf{trunc}^\ast(\a)|_{e_U}$. \smallskip

Next, we represent the fundamental class $[M]$ as a $(n+k)$-chain $\mathcal M = \sum_{\{e:\; \dim(e) = n+k\}} e$, where each $(n+k)$-cell $e$ is viewed, with the help of $T$, as a singular $(n+k)$-chain, a sum of singular $(n+k)$-simplicies.

Then, using the basic properties of cap and cup products ($\cap$ and $\cup$) (see \cite{Sp}), we get $$\mathcal M \cap \mathsf{trunc}^\ast(\a) =  
\sum_{\{e:\; \dim(e) = n+k\}} e \cap (\pi^\ast(\a) \cup u)  $$ 
$$= \sum_{\{e:\; \dim(e) = n+k\}}  (e \cap u) \cap \pi^\ast(\a)= \sum_{\{e:\; \dim(e) = n+k\}}  e_u \cap \pi^\ast(\a),$$
where the $n$-chain $e_u :=  e \cap u$. Applying $\pi_\ast$ to the last expression, we get
$$\pi_\ast\big(\sum_e  e_u \cap \pi^\ast(\a)\big) = \sum_e  \pi_\ast(e_u \cap \pi^\ast(\a)) = \sum_e  (\pi_\ast(e_u) \cap \a) =  \big(\sum_e  \pi_\ast(e_u)\big) \cap \a =  [N] \cap \a.
$$

Since $\pi: U \to N$ is a homotopy and thus homology equivalence, the cycles $[M] \cap \mathsf{trunc}^\ast(\a)$ and $\e_\ast([N] \cap \a)$ are homologous in $H_{n - j}(M \setminus K;\, \A^t)$. This completes the proof of the lemma.
\end{proof}
\smallskip




The commutativity of diagram \ref{eq.DIAGRAM} implies instantly the following fact.

\begin{corollary}\label{cor.epsilon_iso}  Under the hypotheses and notations of \ref{lem.diagram}, the transfer \hfill \break $H^{j }(N, L;\, \A)  \stackrel{(\mathsf{trunc})^\ast}{\longrightarrow}  H^{j + k}(M, K;\, \A)$ is an isomorphism if and only if  the natural homomorphism $H_{n - j}(N \setminus L;\, \e^\ast(\A^t))   \stackrel{\e_\ast}{\longrightarrow} H_{n - j }(M \setminus K;\, \A^t)$ is. \hfill $\diamondsuit$
\end{corollary}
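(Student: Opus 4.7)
The plan is to derive the corollary as an immediate consequence of Lemma~\ref{lem.diagram}. The key observation is that in the commutative square (\ref{eq.DIAGRAM}), both vertical arrows are Poincar\'{e} duality isomorphisms, established in that lemma for the pairs $(N,L) \subset N$ and $(M,K) \subset M$ by capping with the (twisted) fundamental classes $[N]$ and $[M]$.

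Granting this, the argument is a one-line diagram chase. Since the square commutes and both vertical arrows are invertible, the top horizontal map $(\mathsf{trunc})^{\ast}$ factors, up to pre- and post-composition with isomorphisms, as the bottom horizontal map $\e_{\ast}$. Concretely, from the commutativity relation $([M]\cap \sim)\circ (\mathsf{trunc})^{\ast} = \e_{\ast}\circ ([N]\cap \sim)$ one obtains
\[
(\mathsf{trunc})^{\ast} \; = \; ([M]\cap \sim)^{-1}\circ \e_{\ast}\circ ([N]\cap \sim).
\]
Consequently, $(\mathsf{trunc})^{\ast}$ is an isomorphism if and only if $\e_{\ast}$ is, which gives both directions of the ``if and only if'' simultaneously.

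The real work has already been carried out in the proof of Lemma~\ref{lem.diagram} itself, namely the verification that $(\mathsf{trunc})^{\ast}$ descends to a well-defined map on cohomology (via the spectral sequence comparison of the filtrations $\{M_{p}^{\sharp}\}$ and $\{N_{p}^{\sharp}\}$) and that the square commutes (via the regular neighborhood $U(N)$ of $N$ in $M$, the retraction $\pi \colon U(N)\to N$, and the Thom cocycle $u$ with $[M]\cap u = \e_{\ast}([N])$). By contrast, no additional geometric or homotopy-theoretic input is required for the present corollary: it is a purely formal consequence of having a commutative square whose two vertical sides are isomorphisms, so there is no real obstacle to overcome beyond citing Lemma~\ref{lem.diagram}.
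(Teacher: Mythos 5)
Your argument matches the paper's exactly: the paper dispatches this corollary with the single remark that the commutativity of diagram~\ref{eq.DIAGRAM} implies it instantly, and your diagram chase (conjugating $\e_\ast$ by the two vertical Poincar\'e duality isomorphisms to obtain $(\mathsf{trunc})^\ast$) is precisely the formal argument being invoked. Nothing more is needed.
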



\medskip

Now we are in position to consider stabilization in cohomology for the spaces $\cB_d^{\bfc \Theta}$.

\begin{theorem}[{\bf Dual short stabilization} $\mathbf{d \Rightarrow d+2}$, projective case]\label{simple_stabilizationc} 
Let $\Theta \subseteq \Om^\infty$ be a closed closed equal parity 
poset.

Then for $d$ of parity $\Theta$ the embedding $\e:\, \cB_{d}^{\mathbf c\Theta} \subset \cB_{d+2}^{\mathbf c\Theta}$, induces an isomorphism 
\begin{eqnarray}\label{eq.epsilon is iso} 
\e_\ast: H_k(\cB_{d}^{\bfc\Theta};\, \e^\ast(\Z^t)) \cong H_k(\cB_{d+2}^{\bfc\Theta};\, \Z^t)
\end{eqnarray}
for all $k \leq d+2-\psi_\Theta(d+2)$. 
\smallskip

If $d \equiv 1 \mod 2$, then one can replace in \ref{eq.epsilon is iso} the local coefficient system $\Z^t$ with the trivial $\Z[\pi_1(\cB_{d+2}^{\mathbf c\Theta})]$-module $\Z$.
\end{theorem}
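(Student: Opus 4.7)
The plan is to combine \ref{lem.diagram} (which relates $\e_\ast$ to a cohomological transfer) with the homological stabilization established in \ref{simple_stabilization}(ii). I set $M=\cB_{d+2}\cong\R\P^{d+2}$ (so $n=d+2$), $N=\e(\cB_d)\cong\R\P^d$ embedded via $f\mapsto(x^2+y^2)f$ (so $k=2$), $K=\cB_{d+2}^\Theta$, and $L=K\cap N=\e(\cB_d^\Theta)$. The two hypotheses of \ref{lem.diagram} hold: the normal bundle of $\R\P^d$ in $\R\P^{d+2}$ is $2\gamma_d$ (twice the tautological line bundle), so $w_1(\nu)=2w_1(\gamma_d)=0$ and $\nu$ is orientable; the cellular compatibility is exactly the content of the paragraph immediately preceding the theorem, because $\e$ maps $\mathring{\sR}_d^{(\om,\kappa)}$ bijectively onto $\sR_{d+2}^{(\om,\kappa)}\cap N$ with the ambient dimension dropping by $2$, while cells of $\cB_{d+2}$ with $|(\om,\kappa)|=d+2$ consist of forms with all real roots and hence miss $N$.

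By \ref{cor.epsilon_iso}, the map $\e_\ast$ is an isomorphism in degree $k$ of the theorem iff the cohomological transfer
$$(\mathsf{trunc})^\ast:\; H^{d-k}(\cB_d,\cB_d^\Theta;\Z)\longrightarrow H^{d-k+2}(\cB_{d+2},\cB_{d+2}^\Theta;\Z)$$
is an isomorphism in the claimed range. To establish this, I would dualize the strategy of \ref{simple_stabilization}(ii). The short exact sequence
\begin{equation*}
0 \to \Z[\Theta_{d+2}] \to \Z[\Theta_{\langle d+2]}] \xrightarrow{\mathsf{trunc}} \Z[\Theta_{\langle d]}] \to 0
\end{equation*}
dualizes to a short exact sequence of cochain complexes, and \ref{lem:iso} provides \emph{literal} isomorphisms of the cellular chain groups (not merely quasi-isomorphisms) in the appropriate range. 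Since $\dim\mathcal K_{d+2}^\Theta=\psi_\Theta(d+2)$ by \ref{lem.dim_K}, the cochain groups dual to $\Z[\Theta_{d+2}]$ vanish in degrees exceeding $\psi_\Theta(d+2)$, so the associated long cohomology sequence forces $(\mathsf{trunc})^\ast$ to be an isomorphism of the absolute cohomology $H^\ast(\cB_d^\Theta)\to H^{\ast+2}(\cB_{d+2}^\Theta)$ in the dual range. The pair version then follows by a five-lemma argument parallel to the one used in the proof of \ref{simple_stabilization}(ii), but applied to the cohomology long exact sequences of $(\cB_{d+2},\cB_{d+2}^\Theta)$ and $(\cB_d,\cB_d^\Theta)$ linked by $(\mathsf{trunc})^\ast$ and the corresponding transfers on the ambient projective spaces.

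For the final assertion: when $d$ is odd, both $\cB_d\cong\R\P^d$ and $\cB_{d+2}\cong\R\P^{d+2}$ are orientable manifolds, so the twisted local system $\Z^t$ is canonically isomorphic to the trivial system $\Z$ on each of them, and likewise upon restriction to the open subsets $\cB_d^{\bfc\Theta}$ and $\cB_{d+2}^{\bfc\Theta}$.

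I expect the main technical obstacle to be the careful range-tracking needed to recover exactly $k\leq d+2-\psi_\Theta(d+2)$: a naive dualization of \ref{simple_stabilization}(ii) via the universal coefficient theorem would lose a degree to the Ext term, so the stronger chain-level isomorphism of \ref{lem:iso} is essential. A secondary subtlety is that cells outside $\Theta$ with $|(\om,\kappa)|=d+2$ are not controlled in dimension by $\psi_\Theta(d+2)$, so their contribution to the long exact sequence of the pair must be absorbed through the five-lemma step using the cohomological stabilization on the subspaces and on the ambient spaces.
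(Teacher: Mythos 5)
Your proposal is correct and follows the same structural skeleton as the paper's proof: the identical instantiation of \ref{lem.diagram} and \ref{cor.epsilon_iso} with $M=\cB_{d+2}$, $N=\cB_d$, $K=\cB_{d+2}^\Theta$, $L=\cB_d^\Theta$, reducing $\e_\ast$ to the transfer $(\mathsf{trunc})^\ast$ on relative cohomology, and the same treatment of the orientability remark for odd $d$. Where you diverge is in how the transfer isomorphism on pairs is obtained: the paper takes the homological isomorphism of \ref{simple_stabilization}(ii) and invokes the universal coefficient theorem, whereas you propose to dualize the short exact sequence of cellular chain complexes from \ref{lem13.8x}, get the absolute cohomological isomorphism directly from the vanishing of the kernel complex above degree $\psi_\Theta(d+2)$, and then run the Five Lemma on the cohomology long exact sequences of the two pairs. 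Your instinct that the paper's ``by UCT'' is slightly too quick is well founded: the naturality of the Universal Coefficient short exact sequence would require the homology isomorphism in two consecutive degrees, which naively shrinks the range by one; the chain-level short exact sequence (where the kernel $\Z[\Theta_{d+2}]$ is concentrated in dimensions $\leq\psi_\Theta(d+2)$) recovers the full range in the absolute case. Be aware, though, that the same one-degree tension reappears at the boundary of the range in your Five-Lemma step for the relative groups, where one of the flanking maps falls just outside the guaranteed isomorphism range; you would need the weaker epi/mono version of the Five Lemma there, just as the paper implicitly does. This is a shared subtlety, not a gap specific to your argument.
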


\begin{proof} 
We apply \ref{lem.diagram} and \ref{cor.epsilon_iso} to the spaces of $\cB_d$ and $\cB_d^\Theta$, where $N = \cB_d$, $M = \cB_{d+2}$, $K = \cB_{d+2}^\Theta$, and $L = \cB_d^\Theta$.  In order to apply \ref{lem.diagram}, we use the fundamental fact 
that stratified algebraic sets admit triangulations \cite{Jo}, \cite{Har}, \cite{Har1}, consistent with their stratifications.

Let $\mathbb A = \Z$ in \ref{cor.epsilon_iso}. We then get that $$\e_\ast: H_{d-j}(\cB_{d}^{\mathbf c\Theta}; \e^\ast(\Z^t)) \rightarrow H_{d-j}(\cB_{d+2}^{\mathbf c\Theta}; \Z^t)$$ is an isomorphism if and only if 
	$$\mathsf{trunc}^\ast : H^{j}(\cB_{d}, \cB_{d}^{\Theta};\, \Z) \rightarrow H^{j + 2}(\cB_{d+2}, \cB_{d+2}^{\Theta};\, \Z)$$ is an isomorphism.  By \ref{simple_stabilization}, $H_{j+2}(\cB_{d+2},\cB_{d+2}^{\Theta}; \Z) \stackrel{(\mathsf{trunc})_\ast}{\longrightarrow} H_j(\cB_{d}, \cB_{d}^\Theta; \Z)$ is an isomorphism for all  $j+1  >  \psi_\Theta(d+2)$. By the universal coefficient theorem, $H^{j }(\cB_{d}, \cB_{d}^{\Theta};\, \Z) \stackrel{(\mathsf{trunc})^\ast}{\longrightarrow}  H^{j + 2}(\cB_{d+2}, \cB_{d+2}^{\Theta};\, \Z)$ is an isomorphism for all  $j+1  > \psi_\Theta(d+2)$. Therefore, by \ref{cor.epsilon_iso}, $\e_\ast: H_k(\cB_{d}^{\mathbf c\Theta}; \e^\ast(\Z^t)) \cong H_k(\cB_{d+2}^{\mathbf c\Theta}; \Z^t)$ is an isomorphism for all $k = d+2-j$ and $j+1 > \psi_\Theta(d+1)$, which yields $k \leq d+2-\psi(d+2)$.
\smallskip

 For $d > 1$, $\e_\ast: \pi_1(\R\P^d) \to \pi_1(\R\P^{d+2})$ is an isomorphism between two copies of $\Z_2$. Note that the Stiefel-Whitney homomorphism $w: \pi_1(\cB_{d}^{\mathbf c\Theta}) \to \Z_2$ factors through the isomorphism $\tilde w: \pi_1(\R\P^d) \to  \Z_2$, and the Stiefel-Whitney homomorphism $w^\#: \pi_1(\cB_{d+2}^{\mathbf c\Theta}) \to \Z_2$ factors through the isomorphism $\tilde w^\#: \pi_1(\R\P^{d+2}) \to  \Z_2$. Therefore, $\e^\ast(\Z^t) \cong \Z^t$ as $\Z[\pi_1(\cB_{d}^{\mathbf c\Theta})]$-modules. Hence, if $d \equiv 1 \mod 2$, then one can replace in \ref{eq.epsilon is iso} the local coefficient system $\Z^t$ with the trivial $\Z[\pi_1(\cB_{d+2}^{\mathbf c\Theta})]$-module $\Z$.
\end{proof}

\begin{corollary}[{\bf Dual long stabilization} $\mathbf{d \Rightarrow \infty}${\bf, the projective case}] Let $\Theta \subseteq \Om^\infty$ be a profinite (see \ref{def.profinite})
	closed equal parity poset. \smallskip 

Then, for each $j >0$, there is a sufficiently big number $d$ of parity $\Theta$ such that the homomorphism 
$((\e_{d, d'})_\ast: H_j(\cB_{d}^{\mathbf c\Theta}; \e^\ast(\Z^t)) \cong H_j(\cB_{d'}^{\mathbf c\Theta}; \Z^t)$ is an isomorphism for 
all  $d' \geq d$ of parity $\Theta$.
\end{corollary}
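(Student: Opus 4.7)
The plan is to iterate \ref{simple_stabilizationc} along the sequence $d, d+2, d+4, \dots$, once $d$ is chosen large enough that the range of stabilization, $k \leq d+2 - \psi_\Theta(d+2)$, contains the fixed index $j$. The key enabler is \ref{lem:profinite}, which asserts that for a profinite $\Theta$ the quantity $d - \psi_\Theta(d)$ diverges to $+\infty$ as $d \to \infty$ along $d$ of parity $\Theta$.

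First I would fix $j > 0$ and apply \ref{lem:profinite} to produce a threshold $d_0 = d_0(j)$ of parity $\Theta$ with the property that $d+2 - \psi_\Theta(d+2) \geq j$ for every $d \geq d_0$ of parity $\Theta$. Then, for each such $d$, \ref{simple_stabilizationc} directly yields the one-step isomorphism
$$\e_\ast:\; H_j(\cB_d^{\mathbf c\Theta};\,\e^\ast(\Z^t)) \;\cong\; H_j(\cB_{d+2}^{\mathbf c\Theta};\,\Z^t).$$
For any $d' \geq d \geq d_0$ of parity $\Theta$, the difference $d' - d$ is a non-negative even integer, so I would exhibit $(\e_{d,d'})_\ast$ as the composition of finitely many such one-step isomorphisms, which is therefore an isomorphism itself.

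The main (and essentially only) technical obstacle is to check that the twisted coefficients $\Z^t$ on the successive spaces $\cB_d^{\mathbf c\Theta},\, \cB_{d+2}^{\mathbf c\Theta},\, \dots$ are matched coherently by the pullbacks under the iterated inclusions $\e$, so that the composition of one-step maps is well-defined on homology with a fixed local system on the top space. For odd $d$, the last part of \ref{simple_stabilizationc} already reduces the coefficients to the trivial module $\Z$, so nothing is to be checked. For even $d$, I would argue that the Stiefel--Whitney homomorphism $w: \pi_1(\cB_d^{\mathbf c\Theta}) \to \Z_2$ factors through $\pi_1(\cB_d) \cong \pi_1(\R\P^d) \cong \Z_2$, and that the inclusions $\R\P^d \hookrightarrow \R\P^{d+2}$ induce isomorphisms on $\pi_1$. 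Combined with the naturality of $\e^\ast$, this shows that the successive pulled-back local systems are canonically identified up the tower, so the composed maps really do land in the same coefficient system and the iteration produces the asserted isomorphism $(\e_{d,d'})_\ast$.
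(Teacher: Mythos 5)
Your proof takes essentially the same route as the paper, which simply cites \ref{lem:profinite} and \ref{simple_stabilizationc}: pick $d$ large enough that $d+2-\psi_\Theta(d+2)\geq j$, then iterate the one-step isomorphism. The extra care you take in matching the pulled-back local systems $\Z^t$ up the tower is a reasonable elaboration that the paper leaves implicit.
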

\begin{proof}
	The assertion follows immediately from \ref{lem:profinite} and \ref{simple_stabilizationc}.
\end{proof}

Next, we derive the analogues of these stabilizations for the spaces of polynomials.

\begin{theorem}[{\bf Dual short stabilization}, $\mathbf{d \Rightarrow d+2}${\bf, the polynomial case}]
	\label{pol_simple_stabilizationc} 
Let $\Theta$ be a closed equal parity subposet of $\mathbf\Om$. 
Then for $d$ of parity $\Theta$ the embedding $\e:\, \cP_{d}^{\mathbf c\Theta} \subset \cP_{d+2}^{\mathbf c\Theta}$, induces an isomorphism 
\begin{eqnarray*}
\e_\ast: H_k(\cP_{d}^{\bfc\Theta};\, \Z) \cong H_k(\cP_{d+2}^{\bfc\Theta};\, \Z)
\end{eqnarray*}
for all $k \leq d+2-\psi_\Theta(d+2)$. 

\end{theorem}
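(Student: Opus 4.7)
The plan is to mirror the proof of \ref{simple_stabilizationc}, with simplifications due to the fact that the ambient space $\bar{\cP}_d \cong S^d$ is orientable and (for $d \geq 2$) simply connected, so no twisted coefficient system is required. First, I would extend the embedding $\e: \cP_d \hookrightarrow \cP_{d+2}$, $P(x) \mapsto (x^2+1)\,P(x)$, to a map $\bar\e: \bar{\cP}_d \to \bar{\cP}_{d+2}$ between one-point compactifications by declaring $\bar\e(\infty) = \infty$. Since multiplication by $x^2+1$ is affine in the coefficients, $\bar\e$ realizes $S^d$ as a closed codimension-$2$ submanifold of $S^{d+2}$ with trivial (in particular, orientable) normal bundle. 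Because $x^2+1$ has no real roots, the map preserves real root multiplicity patterns, so $\bar\e(\bar{\cP}_d^\Theta) \subseteq \bar{\cP}_{d+2}^\Theta$, and for every composition $\om \in \Om_{\langle d+2]}$ of the correct parity the intersection $\mathring{\sR}_{d+2}^\om \cap \bar\e(\bar{\cP}_d)$ is either empty or equal to $\bar\e(\mathring{\sR}_d^\om)$, a codimension-$2$ subcell of $\mathring{\sR}_{d+2}^\om$. This verifies the cellular hypotheses of \ref{lem.diagram}.

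Applying \ref{lem.diagram} and \ref{cor.epsilon_iso} with $M = \bar{\cP}_{d+2}$, $N = \bar{\cP}_d$, $K = \bar{\cP}_{d+2}^\Theta$, $L = \bar{\cP}_d^\Theta$, and trivial local system $\A = \Z$, one obtains a commutative square whose vertical arrows are the Poincar\'e--Lefschetz duality isomorphisms $[\bar{\cP}_d] \cap \sim$ and $[\bar{\cP}_{d+2}]\cap \sim$, whose top horizontal arrow is the cohomological truncation $\mathsf{trunc}^\ast: H^j(\bar{\cP}_d, \bar{\cP}_d^\Theta;\, \Z) \to H^{j+2}(\bar{\cP}_{d+2}, \bar{\cP}_{d+2}^\Theta;\, \Z)$, and whose bottom horizontal arrow is the induced map $\e_\ast: H_{d+2-j}(\cP_d^{\bfc\Theta};\, \Z) \to H_{d+2-j}(\cP_{d+2}^{\bfc\Theta};\, \Z)$. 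Thus $\e_\ast$ is an isomorphism precisely when $\mathsf{trunc}^\ast$ is.

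To finish, I would invoke \ref{pol_simple_stabilization}(ii): the homological truncation $\mathsf{trunc}_\ast: H_{j+2}(\bar{\cP}_{d+2}, \bar{\cP}_{d+2}^\Theta;\, \Z) \to H_j(\bar{\cP}_d, \bar{\cP}_d^\Theta;\, \Z)$ is an isomorphism for every $j > \psi_\Theta(d+2) - 1$. Since all the cellular chain complexes in sight consist of free $\Z$-modules, the universal coefficient theorem upgrades this to the dual cohomological isomorphism $\mathsf{trunc}^\ast$ in the same range. Combining this with the diagram above and setting $k = d+2-j$ yields the desired stabilization $\e_\ast: H_k(\cP_d^{\bfc\Theta};\, \Z) \cong H_k(\cP_{d+2}^{\bfc\Theta};\, \Z)$ for all $k \leq d + 2 - \psi_\Theta(d+2)$.

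The main obstacle I foresee is the careful verification of the technical triangulability and cellular compatibility hypotheses of \ref{lem.diagram} at the distinguished point $\infty \in \bar{\cP}_d$, which lies on the boundary of every top-dimensional cell of both $\bar{\cP}_d$ and $\bar{\cP}_{d+2}$. Away from $\infty$ the embedding is the restriction of an affine map $\cP_d \hookrightarrow \cP_{d+2}$, for which the normal bundle and the cellular stratification behave transparently; the local structure at $\infty$ can nonetheless be handled via the general triangulation theorems for stratified real algebraic sets cited in the proof of \ref{simple_stabilizationc}.
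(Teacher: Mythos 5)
Your proof follows essentially the same route as the paper's: reduce the claim, via \ref{lem.diagram} and \ref{cor.epsilon_iso} with $M=\bar{\cP}_{d+2}$, $N=\bar{\cP}_d$, $K=\bar{\cP}_{d+2}^\Theta$, $L=\bar{\cP}_d^\Theta$, $\A=\Z$, to the stabilization of the relative (co)homology of the pairs, which is exactly \ref{pol_simple_stabilization}(ii) (the paper reaches this same point by way of the ladder of long exact sequences, \ref{cor13.4b}, and the Five Lemma, but these are two presentations of the same argument). Your flagged concern about the cell $\{\infty\}$ violating the codimension hypothesis of Hypotheses~A is in fact harmless and worth no more than a remark: $\infty$ lies in both $K$ and $L$, so it contributes nothing to the relative cochain complexes appearing in \ref{lem.diagram}, and for any cell $e$ of dimension $<k=2$ the degree-shifted target $\mathcal C_{\dim(e)-k}(\cdot)$ is zero, so $\mathsf{trunc}(e)=0$ is forced regardless of what $e\cap N$ looks like; thus the cell-compatibility hypothesis is genuinely needed only for cells of dimension $\geq k$, where it holds.
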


\begin{proof} 
The long exact cohomology sequences of the pairs $(\bar{\cP}_{d+2}, \bar{\cP}_{d+2}^{\Theta})$ and $(\bar{\cP}_{d}, \bar{\cP}_{d}^{\Theta})$, linked by the vertical $(\mathsf{trunc})^\ast$ homomorphisms lead to a commutative diagram analogous to the one in \ref{DIAGRAM_X}. Then  combining \ref{cor13.4b} with the Five Lemma validates the first claim.  Applying the diagram \ref{eq.DIAGRAM} from \ref{lem.diagram}, proves the second claim. 
\end{proof}

\begin{corollary}[{\bf Dual long stabilization}
	$\mathbf{d \Rightarrow \infty}${\bf, the polynomial case}]
	\label{cor.stable_Theta_polys} 
For any \emph{profinite} closed $\Theta$ and for each $j > 0$, there is a sufficiently big number $d$ of parity $\Theta$ such that the homomorphism 
$(\e_{d, d'})_\ast: H_j(\bar{\cP}_{d}^{\mathbf c\Theta}; \Z) \to H_j(\bar{\cP}_{d'}^{\mathbf c\Theta}; \Z)$ is an isomorphism for all $d' \geq d$ of parity $\Theta$. 
\end{corollary}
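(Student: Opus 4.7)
The plan is to obtain the long stabilization simply by iterating the short stabilization Theorem~\ref{pol_simple_stabilizationc}, using the asymptotic growth of $\psi_\Theta$ established in Lemma~\ref{lem:profinite} to guarantee that the dimension bound is satisfied in every step from some point onward.

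First, I would invoke Lemma~\ref{lem:profinite}: since $\Theta$ is profinite, $\lim_{d\to\infty}(d-\psi_\Theta(d)) = +\infty$, the limit being taken over $d$ of parity $\Theta$. Consequently, for the fixed degree $j > 0$, there exists an integer $d_0$ of parity $\Theta$ such that
\[
(d+2) - \psi_\Theta(d+2) \;\geq\; j \qquad \text{for every } d \geq d_0 \text{ of parity } \Theta.
\]
This is exactly the hypothesis needed to apply Theorem~\ref{pol_simple_stabilizationc} in degree $j$ for each pair $(d, d+2)$ with $d \geq d_0$ of parity $\Theta$.

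Second, I would interpret the embedding $\e_{d,d'}$ as an iterated composition. At the level of polynomials, $\e: \cP_d^{\bfc\Theta} \hookrightarrow \cP_{d+2}^{\bfc\Theta}$ is given by $P(x) \mapsto (x^2+1)\,P(x)$ (the polynomial analog of the map $\e$ used in the projective case, obtained via the embedding $\wp$), which manifestly preserves the real root multiplicity pattern $\omega$ and hence sends the complement of $\cP^\Theta_d$ into the complement of $\cP^\Theta_{d+2}$. For any $d' \geq d \geq d_0$ of parity $\Theta$, the map $\e_{d,d'}$ factors as $\e \circ \e \circ \cdots \circ \e$ ($(d'-d)/2$ copies). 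By Theorem~\ref{pol_simple_stabilizationc}, each factor induces an isomorphism on $H_j(\,\cdot\,;\Z)$, so $(\e_{d,d'})_*$ is an isomorphism as the composition of isomorphisms.

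I do not expect any serious obstacle: the corollary is essentially a bookkeeping exercise combining two prior results. The only points requiring a line of verification are (i) that $\lim_{d\to\infty}(d-\psi_\Theta(d))=\infty$ forces the bound $j \leq (d+2)-\psi_\Theta(d+2)$ to hold for \emph{all} large $d$ of parity $\Theta$ (not merely along a subsequence), which follows from the fact that the complement of this condition is a bounded subset of $\N$; and (ii) that the iterated inclusion $\e^{(d'-d)/2}$ coincides with $\e_{d,d'}$, which is immediate from the definition of the stabilization map.
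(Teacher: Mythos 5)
Your proposal is correct and follows essentially the same route as the paper: the paper's own proof is a one-line citation of Lemma~\ref{lem:profinite} and Theorem~\ref{pol_simple_stabilizationc}, and you have simply spelled out the bookkeeping (translating the limit $\lim_{d\to\infty}(d-\psi_\Theta(d))=\infty$ into a uniform threshold $d_0$ for fixed $j$, and writing $\e_{d,d'}$ as an iterate of $\e$). One cosmetic remark: the corollary as printed writes $\bar{\cP}_d^{\mathbf c\Theta}$, but the short stabilization theorem it rests on concerns $\cP_d^{\mathbf c\Theta}$, so the bars are evidently a typo in the statement and your reading of the space is the intended one.
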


\begin{proof}
	The assertion follows immediately from \ref{lem:profinite} and \ref{pol_simple_stabilizationc}.
\end{proof}


We conclude by presenting a special case where we can describe the homotopy type of $\cP_d^{\bfc \Theta}$. 

\begin{proposition}\label{th.bouquet} For $k > 2$ and a fixed parity let $\Theta$ be the closed poset of compositions from 
  $\Om_{|\sim|' \geq k}$ of that parity. 
	Then $\mathcal P_{d}^{\mathbf c\Theta}$ has the homotopy type of a bouquet of $(k-1)$-spheres.
	The number of spheres in the bouqet equals the absolute value of the reduced Euler characteristic of $\bar{\cP}_d^\Theta$.

\end{proposition}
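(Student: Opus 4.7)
The plan is to show that $\cP_d^{\bfc\Theta}$ is $(k-2)$-connected with reduced integral homology free abelian of rank $r$ concentrated in dimension $k-1$, and then to identify it with a wedge of $r$ copies of $S^{k-1}$ via Hurewicz and Whitehead. The main input is a clean application of \ref{cor13.4b} to $\Theta=\Om_{|\sim|'\ge k}$, exploiting that this choice truncates from below the combinatorial chain complex of $\bar{\cP}_d\cong S^d$. The most delicate step will be the $(k-2)$-connectedness of $\cP_d^{\bfc\Theta}$, which I will handle by transversality with respect to the semialgebraic stratification of $\cP_d^\Theta$.

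For the cohomology computation, observe that for our $\Theta$ the group $\Z[\Theta_{|\sim|'=c}]$ coincides with $\Z[\Om_{\langle d],\,|\sim|'=c}]$ for $c\ge k$ and vanishes for $c<k$. Thus the complex in \ref{cor13.4b} is the truncation
\[
0\to \Z[\Theta_{|\sim|'=k}]\to \Z[\Theta_{|\sim|'=k+1}]\to \cdots \to \Z[\Theta_{|\sim|'=d-1}]\to 0
\]
of the full combinatorial complex at reduced norm $k$. By \ref{cor13.4} applied to $\Om_{\langle d]}$, the full complex computes $\tilde H_*(\bar{\cP}_d)=\tilde H_*(S^d)$ and is therefore exact at every position $c\ge 1$. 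Exactness at positions $c>k$ is inherited by the truncation, and at position $c=k$ only the incoming image is lost. Therefore $H^j(\cP_d^{\bfc\Theta})=0$ for $j\ne k-1$, and $H^{k-1}(\cP_d^{\bfc\Theta})\cong \ker\{\d\colon \Z[\Theta_{|\sim|'=k}]\to \Z[\Theta_{|\sim|'=k+1}]\}$ is free abelian of some rank $r\ge 0$. The Alexander duality isomorphisms $\tilde H^j(\cP_d^{\bfc\Theta})\cong \tilde H_{d-j-1}(\bar{\cP}_d^\Theta)$ and $\tilde H_j(\cP_d^{\bfc\Theta})\cong \tilde H^{d-j-1}(\bar{\cP}_d^\Theta)$, combined with universal coefficients on the finite CW complex $\bar{\cP}_d^\Theta$, transfer the concentration to integral homology, giving $\tilde H_j(\cP_d^{\bfc\Theta};\Z)=0$ for $j\ne k-1$ and $\tilde H_{k-1}(\cP_d^{\bfc\Theta};\Z)\cong \Z^r$.

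Next I would establish $(k-2)$-connectedness by transversality inside $\cP_d\cong \R^d$. Since $k\ge 3$, the closed subset $\cP_d^\Theta$ is a finite union of semialgebraic strata $\mathring{\sR}^\om_d$ of real codimension $|\om|'\ge k$. Given $f\colon S^i\to \cP_d^{\bfc\Theta}$ with $i\le k-2$, extend it to $D^{i+1}\to \R^d$ using contractibility of $\R^d$ and perturb the extension rel boundary, stratum by stratum, to be transverse to each $\mathring{\sR}^\om_d$; the expected intersection dimension $(i+1)+(d-|\om|')-d\le -1$ is negative, so the perturbed disk lands in $\cP_d^{\bfc\Theta}$. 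Hence $\pi_i(\cP_d^{\bfc\Theta})=0$ for $0\le i\le k-2$, and Hurewicz gives $\pi_{k-1}(\cP_d^{\bfc\Theta})\cong H_{k-1}(\cP_d^{\bfc\Theta})\cong \Z^r$.

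To conclude, choose maps $\alpha_1,\dots,\alpha_r\colon S^{k-1}\to \cP_d^{\bfc\Theta}$ realizing a basis of $\pi_{k-1}$ and assemble them into $\Phi\colon \bigvee_{i=1}^r S^{k-1}\to \cP_d^{\bfc\Theta}$. By construction $\Phi$ induces an isomorphism on $H_{k-1}$, and the remaining reduced homology groups of both sides vanish; since $\cP_d^{\bfc\Theta}$ is open in $\R^d$ and hence has the homotopy type of a CW complex, Whitehead's theorem promotes $\Phi$ to a homotopy equivalence. Finally, concentration of the respective integral homologies in dimensions $k-1$ and $d-k$ gives $|\tilde\chi(\cP_d^{\bfc\Theta})|=r=|\tilde\chi(\bar{\cP}_d^\Theta)|$, matching the count asserted in the statement.
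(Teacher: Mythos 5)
Your proof is correct and reaches the same conclusion as the paper, but it takes a genuinely more hands-on route at two points. For the homology computation, the paper simply observes that $\bar{\cP}_d^\Theta$ is by definition the $(d-k)$-skeleton of $\bar{\cP}_d \cong S^d$ in the natural CW structure, and then invokes the standard fact that a skeleton of a sphere has torsion-free (co)homology concentrated in the top dimension of the skeleton. You instead unpack this through the explicit combinatorial complex from \ref{cor13.4b}, arguing that the complex for $\Theta$ is a truncation of the full complex (which computes $\tilde H_*(S^d)$ and is therefore exact at all positions with $|\sim|'\ge 1$), so the truncated complex is exact everywhere except at $|\sim|'=k$, where the homology is the (free) kernel of the outgoing map. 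Both computations give the same result; yours is more explicit but longer, while the paper's skeleton observation is cleaner. For connectivity, the paper merely asserts simple connectedness and then cites Hatcher's Theorem 4C.1 to upgrade to a wedge of spheres; you prove the full $(k-2)$-connectedness directly by a general-position/transversality argument against the codimension-$\ge k$ strata of $\cP_d^\Theta$, then apply Hurewicz and the homology Whitehead theorem. Your transversality argument is stated somewhat informally (stratum-by-stratum perturbation of a map against a union of non-closed strata implicitly needs some Whitney-type regularity or a global dimension-count version of Thom transversality), but the dimension count $(i+1)+(d-k)-d<0$ for $i\le k-2$ is right and the argument is standard; in fact it does \emph{more} work than strictly necessary, since once the homology vanishes below degree $k-1$, simple connectedness alone is enough for Hurewicz to apply, which is all the paper uses. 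In net, your proof is more self-contained (you do not assert connectivity, you prove it, and you avoid invoking the black-box Theorem 4C.1), at the cost of more machinery; the paper's proof is shorter but leans on an unproved simple-connectivity claim and a citation.
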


\begin{proof} 
Since by its definition $\bar{\mathcal P}_{d}^{\Theta}$ is the $(d-k)$-skeleton of $\bar{\mathcal P}_d \cong S^d$, the cohomology of $\bar{\mathcal P}_{d}^{\Theta}$ is torsion-free and is concentrated in a single dimension $d-k$. By the Alexander duality,  the homology of  $\mathcal P_{d}^{\mathbf c\Theta}$ is torsion-free and is concentrated in a single dimension $k-1$. The space $\mathcal P_{d}^{\mathbf c\Theta}$ is simply-connected for $k > 2$. By \cite{Ha}, Theorem 4C.1, this implies that $\mathcal P_{d}^{\mathbf c\Theta}$
has the homotopy type of a bouquet of $(k-1)$-spheres.  
By Alexander duality the number of spheres in the bouquet 
	equals the absolute value of the reduced Euler characteristic of $\bar{\cP}_d^\Theta$.
 \end{proof}
 
%
%


\section{Computational results}\label{sec:computer}
\label{sec:compute}

In conclusion, let us state somewhat surprising results of one computer-assisted computation.  

For a given $\om \in \mathbf\Om$, we denote by $\langle \om \rangle$ the minimal closed poset that contains $\om$.
Below, for $d \le 13$, we list all compositions $\om$ for which the space $\cP_d^{\langle \om \rangle}$ is \emph{homologically nontrivial}. 
In fact, for all $d \le 13$, every homologically nontrivial $\cP_d^{\langle \om \rangle}$ is a \emph{homology} sphere! Moreover, at least for all $\om$'s with $|\om|' > 2$, all spaces $\cP_d^{\mathbf c \langle \om \rangle}$ are \emph{homotopy spheres}.
Unfortunately, the reason for such phenomena is unknown to us... \smallskip

\begin{figure}
\begin{center}
\begin{tabular}{c|c||c|c||c|c}
$d$ & Codimension & $\om$ & $i$ with $\widetilde{H}_i = \Z$ & $\om$ & $i$ with $\widetilde{H}_i = \Z$ \\
  &   &       &   & & \\
\hline 
\hline 
  &   &       &   & & \\
4 & 0 & $(1^2)$ & 3 & &\\
  & 1 & $(2)$ & 3 & & \\
  & 3 & $(4)$ & 1 & & \\
  &   &       &   & & \\
\hline 
  &   &       &   & & \\
5 & 0 & $(1^3)$ & 4 & & \\
  & 4 & $(5)$   & 1 & & \\
  &   &       &   & & \\
\hline 
  &   &       &   & & \\
6 & 0 & $(1^4)$ & 5 & $(1^2)$ & 5 \\
  & 1 & $(2)$ & 5 & & \\ 
  & 5 & $(6)$ & 1 & & \\ 
  &   &       &   & & \\
\hline 
  &   &       &   & & \\
7 & 0 & $(1^5)$ & 6 & $(1^3)$ & 6 \\
  & 6 & $(7)$   & 1 &       & \\
  &   &       &   & & \\
\hline 
  &   &       &   & & \\
8 & 0 & $(1^6)$ & 7 & $(1^4)$ & 7 \\
  &   & $(1^2)$ & 7 & & \\
  & 1 & $(1,2,1)$ & 3 & $(2)$ & 7  \\
  & 2 & $(1,3)$ & 3 & $(3,1)$ & 3 \\
  & 3 & $(4)$ & 3 & & \\
  & 7 & $(8)$ & 1 & & \\
  &   &       &   & & \\
\hline 
  &   &       &   & & \\
9 & 0 & $(1^7)$&8 & $(1^5)$,& 8 \\
  &   & $(1^3)$   & 8 & & \\
  & 1 & $(1^2,2,1)$ & 4 & $(1,2,1^2)$ & 4 \\
  & 2 & $(1^2,3)$ & 4 & $(1,3,1)$ & 4 \\
  &   & $(3,1^2)$ & 4 & & \\
  & 8 & $(9)$ & 1 & & \\
  &   &       &   & & \\
\hline 
  &   &       &   & & \\
10 & 0 & $(1^8)$ & 9 & $(1^6)$ & 9 \\
   &   & $(1^4)$ & 9 & $(1^2)$ & 9 \\
   & 1 & $(1^3,2,1)$ & 5 & $(1^2,2,1^2)$ & 5 \\
   &   & $(1,2,1^3)$ & 5 & $(2)$ & 9 \\
   & 2 & $(1^3,3)$ & 5 & $(1^2,3,1)$ & 5 \\
   &   & $(1,3,1^2)$ & 5 & $(3,1^3)$ & 5 \\
   & 9 & $(10)$ & 1 & & \\
  &   &       &   & & \\
\hline 
\end{tabular}
\end{center}
\caption{\small{ The list of the $\om$'s and of the corresponding unique homological degrees $i = i(\om)$ for which $\widetilde{H}_i(\bar{\cP}^{\langle \omega \rangle}_d; \Z)= \ZZ$ in case  $d \leq 10$. For  $\om$'s and $i$'s not listed above the homology vanishes.}} 
\end{figure}


\begin{figure}
\begin{center}
\begin{tabular}{c|c||c|c||c|c}
$d$ & Codimension & $\om$ & $i$ with $\widetilde{H}_i= \Z$ & $\om$ & $i$ with $\widetilde{H}_i = \Z$ \\
  &   &       &   & & \\
\hline 
\hline 
  &   &       &   & & \\
11 & 0 & $(1^9)$ & 10 & $(1^7)$ & 10 \\
   &   & $(1^5)$ & 10 & $(1^3)$ & 10 \\
   & 1 & $(1^4,2,1)$ & 6 & $(1^3,2,1^2)$ & 6 \\
   &   & $(1^2,2,1^3)$ 6 & $(1,2,1^4)$ & 6 \\
   & 2 & $(1^4,3)$ & 6 & $(1^3,3,1)$ & 6 \\
   &   & $(1^2,3,1^2)$ & 6 & $(1,3,1^3)$ & 6 \\
   &   & $(3,1^4)$ & 6 & $(1,2,1,2,1)$ & 4 \\
   & 3 & $(1,2,1,3)$ & 4 & $(3,1,2,1)$ & 4 \\
   & 4 & $(3,1,3)$ & 4 & & \\
   & 10 & $(11)$ & 1 & & \\
   &   &       &   & & \\
\hline 
   &   &       &   & & \\
12 & 0 & $(1^{10})$ & 11 & $(1^8)$ & 11 \\
   &   & $(1^6)$ & 11 & $(1^4)$ & 11 \\
   &   & $(1^2)$ & 11 & & \\
   & 1 & $(1^5,2,1)$ & 7 & $(1^4,2,1^2)$ & 7 \\
   &   & $(1^3,2,1^3)$ & 7 & $(1^2,2,1^4)$ & 7 \\
   &   & $(1,2,1^5)$ & 7 & $(1,2,1)$ & 5 \\
   &   & $(2)$ & 11 & & \\
   & 2 & $(1^5,3)$ & 7 & $(1^4,3,1)$ & 7 \\
   &   & $(1^3,3,1^2)$ & 7 & $(1^2,3,1^3)$ & 7 \\
   &   & $(1,3,1^4)$ & 7 & $(3,1^5)$ & 7 \\
   &   & $(1,2,1^2,2,1)$ & 5 & $(1,2,1,2,1^2)$ & 5 \\
   &   & $(1,2^2,1)$ & 3 & $(1,3)$ & 5 \\
   &   & $(3,1)$ & 5 & & \\
   & 3 & $(1^2,2,1,3)$ & 5 & $(1,2,1^2,3)$ & 5 \\
   &   & $(1,2,1,3,1)$ & 5 & $(1,3,1,2,1)$ & 5 \\
   &   & $(3,1^2,2,1)$ & 5 & $(3,1,2,1^2)$ & 5 \\
   &   & $(1,2,3)$ & 3 & $(3,2,1)$ & 3 \\
   &   & $(1,4,1)$ & 3 & $(4)$ & 5 \\
   & 4 & $(1,3,1,3)$ & 5 & $(3,1,3,1)$ & 5 \\
   &   & $(1,5)$ & 3 & $(5,1)$ & 3 \\
   &   & $(3^2)$ & 3 & & \\
   & 5 & $(6)$ & 3 & & \\
   & 11 & $(12)$ & 1 \\
   & & & &\\
\hline 
\end{tabular}
\end{center}
\caption{\small{ The list of the $\om$'s and of the corresponding unique homological degrees $i = i(\om)$ for which $\widetilde{H}_i(\bar{\cP}^{\langle \omega \rangle}_d; \Z)= \ZZ$ for $d = 11, 12$. For  $\om$'s and $i$'s not listed above, the homology vanishes.}} 
\end{figure}

\begin{figure}
\begin{center}
\begin{tabular}{c|c||c|c||c|c}
$d$ & Codimension & $\om$ & $i$ with $\widetilde{H}_i = \Z$ & $\om$ & $i$ with $\widetilde{H}_i = \Z$ \\
  &   &       &   & & \\
\hline
\hline 
  &   &       &   & & \\
13 &  0 & $(1^{11})$ & 12 & $(1^9)$ & 12 \\
   &    & $(1^7)$ & 12 & $(1^5)$ & 12 \\
   &    & $(1^3)$ & 12 & & \\
   &  1 & $(1^6,2,1)$ & 8 & $(1^5,2,1^2)$ & 8 \\
   &    & $(1^4,2,1^3)$ & 8 & $(1^3,2,1^4)$ & 8 \\
   &    & $(1^2,2,1^5)$ & 8 & $(1,2,1^6)$ & 8 \\
   &    & $(1^2,2,1)$ & 6 & $(1,2,1^2)$ & 6 \\
   &  2 & $(1^6,3)$ & 8 & $(1^5,3,1)$ & 8 \\
   &    & $(1^4,3,1^2)$ & 8 & $(1^3,3,1^3)$ & 8 \\
   &    & $(1^2,3,1^4)$ & 8 & $(1,3,1^5 )$ & 8 \\
   &    & $(3,1^6)$  & 8 & $(1^2,3)$ & 6 \\
   &    & $(1,3,1)$ & 6 & $(3,1^2)$ & 6 \\
   &    & $(1^3,2,1,2,1)$ & 6 & $(1^2,2,1^2,2,1)$ & 6 \\
   &    & $(1^2,2,1,2,1^2)$ & 6 & $(1,2,1^3,2,1)$ & 6 \\
   &    & $(1,2,1^2,2,1^2)$ & 6 & $(1,2,1,2,1^3)$ & 6 \\
   &    & $(1^2,2^2,1)$ & 4 & $(1,2^2,1^2)$ & 4 \\
   & 3  & $(1^3,2,1,3)$ & 6 & $(1^2,2,1^2,3)$ & 6 \\
   &    & $(1^2,2,1,3,1)$ & 6 & $(1^2,3,1,2,1)$ & 6 \\
   &    & $(1,2,1^2,3,1)$ & 6 & $(1,2,1,3,1^2)$ & 6 \\
   &    & $(1,3,1^2,2,1)$ & 6 & $(1,3,1,2,1^2)$ & 6 \\
   &    & $(3,1^2,2,1^2)$ & 6 & $(3,1,2,1^3)$ & 6 \\
   &    & $(1^2,2,3)$ & 4 & $(1,2,3,1)$ & 4 \\
   &    & $(1,3,2,1)$ & 4 & $(3,2,1^2)$ & 4 \\
   &    & $(1^2,4,1)$ & 4 & $(1, 4,1^2) $ & 4 \\
   & 4  & $(1^2,3,1,3)$ & 6 & $(1,3,1,3,1)$  & 6 \\
   &    & $(3,1,3,1^2)$ & 6 & $(1,3^2)$ & 4 \\
   &    & $(3^2,1)$ & 4 & $(1^2,5)$ & 4 \\
   &    & $(1,5,1)$ & 4 & $(5,1^2)$ & 4  \\
   & 12 & $(13)$ & 1\\ 
   & & &\\
\hline 
\end{tabular}
\end{center}
\caption{\small{The list of the $\om$'s and of the corresponding unique homological degrees $i = i(\om)$ for which $\widetilde{H}_i(\bar{\cP}^{\langle \omega \rangle}_d; \Z)= \ZZ$ for $d = 13$. For the rest of $\om$'s and $i$'s, the homology vanishes.}} 
\end{figure}
 
%
	%
%


 \newpage

\end{document}